\numberwithin{equation}{section}
\newtheorem{theorem}{Theorem}[section]
\newtheorem{lemma}[theorem]{Lemma}
\newtheorem{proposition}[theorem]{Proposition}
\newtheorem{defn}[theorem]{Definition}
\newtheorem{remark}[theorem]{Remark}
\def\beq{\begin{equation}}
\def\eeq{\end{equation}}
\def\cA{\mathcal{A}}
\def\cB{\mathcal{B}}\def\B{\mathcal{B}}
\def\cC{\mathcal{C}}
\newcommand{\C}{\mathcal{C}}
\def\cE{\mathcal{E}}
\def\cF{\mathcal{F}}
\newcommand{\F}{\mathcal{F}}
\def\G{\mathcal{G}}
\def\cI{\mathcal{I}}\def\I{\mathcal{I}}
\def\eps{\varepsilon}
\newcommand{\Lp}{\mathcal{L}}
\def\cM{\mathcal{M}}
\def\cN{\mathcal{N}}
\def\cO{\mathcal{O}}
\def\cP{\mathcal{P}}
\def\cR{\mathcal{R}}
\def\cS{\mathcal{S}}
\newcommand{\Si}{\mathcal{S}}
\def\cW{\mathcal{W}}
\newcommand{\W}{\mathcal{W}}
\newcommand{\N}{\mathbb{N}}
\newcommand{\Ho}{\mathbb{H}}\newcommand{\bH}{\mathbb{H}}
\newcommand{\bn}{\mathbf{n}}
\newcommand{\p}{\mathbf{p}}
\newcommand{\bp}{\mathbf{p}}
\newcommand{\bq}{\mathbf{q}}
\newcommand{\bv}{\mathbf{v}}
\newcommand{\bx}{\mathbf{x}}
\newcommand{\bE}{\mathbf{E}}
\newcommand{\bF}{\mathbf{F}}
\newcommand{\bG}{\mathbf{G}}
\newcommand{\bP}{\mathbf{P}}
\newcommand{\sign}{\text{sign}}
\newcommand{\pd}{\partial}
\newcommand{\vf}{\varphi}
\newcommand{\po}{\psi_1}
\newcommand{\pt}{\psi_2}
\newcommand{\bpsi}{\overline{\psi}}
\newcommand{\R}{\mathbb{R}}
\newcommand{\ve}{\varepsilon}
\newcommand{\ds}{\displaystyle}
\newcommand{\hmu}{\hat{\mu}}
\newcommand{\hg}{\hat{g}}
\begin{document}
\title[Fluctuation of the entropy production for the Lorentz gas under small external forces]
{Fluctuation of the entropy production for the Lorentz gas under small external forces}
\author{Mark F. Demers, Luc Rey-Bellet \and Hong-Kun Zhang}\address{Mark F. Demers, Department of Mathematics,
Fairfield University, Fairfield CT 06824, USA.}
\email{mdemers@fairfield.edu}
\address{Luc Rey-Bellet, Department of
Mathematics and Statistics, UMass Amherst, MA 01003, USA.}
\email{luc@math.umass.edu}
\address{Hong-Kun Zhang, Department of
Mathematics and Statistics, UMass Amherst, MA 01003, USA.}
\email{hongkun@math.umass.edu}

\thanks{M.\ D.\ is partially supported by NSF Grant DMS-1362420; L.\ R.-B.\ is partially supported by NSF Grant DMS-1515712.; H.-K.\
Z.\ is partially supported by NSF  Grant DMS-1151762.}

\date{\today}

\begin{abstract}  In this paper we study the physical and statistical properties of the periodic Lorentz 
gas with finite horizon driven to a non-equilibrium steady state by the combination of  
non-conservative external forces and deterministic thermostats.  A version of this model 
was introduced by Chernov, Eyink, Lebowitz, and Sinai and subsequently generalized by 
Chernov and the third author.   Non-equilibrium steady states for these models are SRB measures 
and they are  characterized by the positivity of the steady state entropy production rate.  
Our main result is  to establish that the entropy production, in this context equal to the phase space 
contraction,  satisfies the Gallavotti-Cohen fluctuation relation.  The main tool needed in the proof 
is the family of anisotropic Banach spaces introduced by the first and third authors to study the ergodic and 
statistical properties of billiards using transfer operator techniques.  
\end{abstract}

\maketitle

%%%%%%%%%%%%%%%%%%%%%%%%%%%%%%%%%%%%%%%%%%%%%%%%%%%%%%%%%%%%%%
\section{Introduction}\label{sec:intro}

The periodic Lorentz gas (or Sinai billiard) is obtained by placing finitely many disjoint scatterers
with smooth boundaries of strictly positive curvature on the 2-torus. 
The dynamics is the motion of a point particle 
traveling at unit speed and undergoing elastic reflections at the boundaries and is purely Hamiltonian. 
The associated two-dimensional collision map (the billiard map) preserves a smooth 
invariant measure $\mu_0$ with very strong ergodic properties: see the works by Sinai, Bunimovich and Chernov 
\cite{Sin70,BS,BSC,C99} on ergodicity,  mixing and the central limit theorem,  the proof  by Young \cite{Y98}  of  exponential decay of correlations, and many other statistical properties \cite{rey, melbourne nicol, melbourne nicol 2} as well as the recent proof by Baladi, Liverani and 
one of the authors \cite{BDL} for the  exponential decay of correlations for the billiard flow.  Of particular 
importance for this paper are the recent papers by two of the authors \cite{demers zhang,DZ2,DZ3}
who introduced Banach spaces suitable for a direct analysis of the dynamics by transfer operators 
which bypasses the construction of symbolic dynamics (Markov partitions  and Young towers). These
functional analytic tools will turn out to be crucial to prove the large deviation theorems needed in 
this paper.

Suitable perturbations of this model where the particle is submitted to external non-conservative 
forces in between or during collisions and to a suitable thermostatting mechanism have been put 
forward as simple, yet realistic, models in non-equilibrium statistical mechanics. 
With a constant  external electric field and an iso-energetic thermostat, this  kind of model was first
studied  by Chernov, Eyink, Lebowitz  and  Sinai \cite{CELSa,CELSb} who proved  
the existence of a unique SRB measure $\mu_+$ for the system:  for $\mu_0$ almost every initial 
condition
the system converges to an invariant measure  $\mu_+$ which is ergodic and mixing, 
and singular with respect to $\mu_0$.  In addition, they established  linear response formulas 
for this system. In subsequent papers, Chernov and one of the authors \cite{Ch01,Ch08,CZZ,Z11}  
generalized and strengthened these results to cover a large class of perturbations 
and our work  will rely on these results extensively.  In a more general context the use of 
thermostats and SRB measures as good models of non-equilibrium steady  states has 
been advocated, see e.g.~the book by Evans and Morriss \cite{EM} and  the papers
by Gallavotti and Cohen  \cite{GC1,GC2}  and Ruelle \cite{Ru99} (more on this  in Section \ref{sec:fluctuations}.)

%
%as a simple, yet physically relevant, model of non-equilibrium 
%statistical mechanics.   Without external forces there is a smooth invariant measure $\mu_0$,  
%the micro canonical measure on the energy surface,  which we call an equilibrium steady state. 
%With or without external forces,  for $\mu_0$ almost  every initial condition the system converges to 
%a invariant measure  $\nu$ which is ergodic and mixing,  such measure $\nu$ are often called SRB-
%measures ( named for Sinai Ruelle and Bowen) or physical measures and  in the presence of 
%external forces $\nu$ and $\mu_0$ are mutually  singular and such $\nu$ are called 
%non-equilibrium steady states (see e.g. the papers \cite{GC1,GC2,Ru99} by  Gallavotti and Cohen, and by Ruelle on the conceptual  foundations  of the non-equilibrium statistical mechanics.). 
%In \cite{CELSa,CELSb} and subsequent papers by  Chernov and one
%of the author \cite{Ch01,Ch08,CZZ,Z11}  it was proved that under general (small) perturbations
%these dynamical systems  have an SRB measure which is mixing exponentially fast and 
%the Green-Kubo formula for the linear response for currents was established. 

One of the main results in this  paper is to establish a version of the so-called Gallavotti-Cohen 
fluctuation theorem \cite{GC1,GC2} for the entropy production  for the Lorentz gas driven out of 
equilibrium by  external forces.  The concept of entropy production  in non-equilibrium statistical 
mechanics,  in this context, was best formalized by Ruelle \cite{Ru96,Ru97,Ru99} (see also the earlier 
work by Andrey \cite{Andrey}) and we will discuss it in Section \ref{sec:fluctuations}.
The fluctuation theorem asserts that for time-reversible systems the time fluctuations (of large deviation type) 
of the entropy production have a universal symmetry:  the ratio of the probabilities of observing an 
average entropy production rate over a time interval of length $T$ equal to $a$ and equal to 
$-a$ is equal to $e^{aT}$.   The study of  the fluctuations of the entropy production for systems 
driven out of equilibrium originated in the numerical observation by  Evans, Cohen, and Morris 
\cite{ECM}  for a thermostatted  system driven by external shear.  The symmetry  of the transient 
fluctuations of  entropy production, that is when the system starts in the  equilibrium  (but not 
stationary) state ($\mu_0$ in our notation) was first noted by Evans and Searles \cite{ES1} 
(see Proposition \ref{ess} in Section \ref{sec:fluctuations}). On the other hand, using Markov partitions,  in 
\cite{GC1,GC2}  Cohen  and  Gallavotti established the fluctuation symmetry for time-reversible smooth uniformly 
hyperbolic systems starting in a  stationary  non-equilibrium state.  The relation between the transient 
and stationary fluctuation  theorem is discussed further in \cite{CG,ES2,luc entropy}. 
From a slightly different point of view,  Kurchan \cite{Ku}, Lebowitz and Spohn \cite{LS} proved the fluctuation theorem for general stochastic (Markovian and/or 
Gibbsian) dynamics and Maes \cite{Ma99} recast the fluctuation theorem as following from
the Gibbs 
property of an equilibrium state by considering the distribution of the time series of the process.  
Also in a related work,  
Jarzynski \cite{Ja}  established a very influential transient relation for the fluctuations  of work of 
a system driven by time-dependent forces.  These (and other) seminal works have given rise to a 
substantial amount of research in the past 20 years, and the fluctuation theorems and relations 
now stand as one of the pillars  in the modern theory of non-equilibrium statistical mechanics.  
There have been a number of recent reviews, among them \cite{Ma03,MN,ChGa,luc 
entropy}, to which we direct the reader for some of the recent developments in this subject.  Among 
these reviews,  Jaksic, Pillet,  and one of the authors \cite{luc entropy} present  a general formalism 
to understand the transient and stationary  fluctuation theorems,  and the relation between them, in the 
general framework of dynamical systems;  to some extent, we will follow the approach 
taken in that paper.

In this paper, we prove the steady state fluctuation relations for the 
periodic Lorentz gas with an external electric field and an iso-energetic thermostat 
\cite{CELSa,CELSb}  as well  as several classes of related models with different forcing 
mechanisms \cite{Ch01,Ch08,CZZ,Z11}.  While the models at hand are uniformly hyperbolic, 
the singularities of the billiard dynamics (due to grazing collisions)  preclude the use 
of Markov partitions to study the fluctuation properties of ergodic averages.  
Instead, we follow 
a direct approach using suitable transfer operators to express the cumulant generating function 
of ergodic averages. This approach to large deviations was used for non-uniformly 
hyperbolic dynamical systems in \cite{rey} using Young towers \cite{Y98}.  
Our approach consists in proving that the fluctuation properties of ergodic averages are the same 
for a  large class of initial distributions, which contains  both the stationary distribution $\mu_0$ of the 
Lorentz gas without external forces used to verify the transient fluctuation theorem, 
and the  invariant SRB measure for the perturbed Lorentz gas.  Since the symmetry of fluctuations when starting from 
$\mu_0$ is easy to establish (see Proposition \ref{ess}) a proof of the fluctuation theorem 
follows then immediately. The key new tool needed is the family of Banach spaces  
introduced by two of the  authors  \cite{demers zhang,DZ2,DZ3} to study the ergodic 
properties of billiards without  using the symbolic dynamics  tools used in earlier approaches (Markov 
partitions \cite{BS},  Markov sieves~\cite{BSC}, Young towers \cite{Y98, C99}). These Banach spaces are 
devised for the exact  purpose to be large enough to contain the SRB invariant measure, singular 
with respect to $\mu_0$ but smooth along unstable directions, yet small enough for the transfer 
operator to have a spectral gap. 
They also have the advantage of being stable under perturbations:  since all the relevant transfer
operators act on a single Banach space, we are able to show that important spectral quantities
vary smoothly as functions of certain system parameters, and from these properties we derive the
necessary control to prove the desired limit theorems.

This paper is organized as follows. In Section~\ref{sec:fluctuations} 
we give a brief overview of the ideas and
concepts of non-equilibrium statistical mechanics needed for the paper.  In Section~\ref{sec:results}
we introduce our model and state our main results. In Section~\ref{common}, following \cite{DZ2}, 
we discuss a general family of maps with singularities, to which our dynamical results apply. 
In Section~\ref{norms} we introduce the Banach spaces and transfer operators needed in our analysis. 
In Section~\ref{uniform} we prove the key analytical estimates needed to establish a spectral gap for the family 
of  transfer operators associated with the entropy production.  Finally, in Section~\ref{moment proof} we establish the 
analyticity and (strict) convexity of the logarithmic moment generating function, allowing us to 
conclude the proof of the fluctuation theorem. In Appendix~\ref{appendix} we provide the Lasota-Yorke 
estimates needed to establish a spectral gap for the relevant operators.

%%%%%%%%%%%%%%%%%%%%%%%%%%%%%%%%%%

\subsection{Entropy production and fluctuation theorems}
\label{sec:fluctuations}

In this section, for the convenience of the reader, we provide a general (and somewhat informal) discussion, following \cite{luc entropy}, of the concepts of non-equilibrium steady states,  entropy production, 
and the fluctuation relations. 

The starting point is an invertible dynamical system $(M,T)$, i.e. a measurable space $M$ and an 
invertible measurable map  $T: M \to M$.  We also postulate the existence of a reference 
measure $\mu_0$  which, in general, is not  an invariant measure for $T$.  

In a physical context one may write $T=T_{\bf E}$ depending on some external non-equilibrium 
forces $\bf E$ with $T_{0}$ (for $\bE=0$) being the equilibrium dynamics without external forces. One may 
think   of  $\mu_0$ as the invariant measure for the dynamics $T_0$ without external forces; in
this context, 
$\mu_0$ is  the equilibrium steady state.  
If we think of $\mu_0$ 
as describing the initial state of the system, we then define $\mu_n$ as the state of the system 
at  time $n \in \mathbb Z$, i.e. we have 
\begin{equation}
\label{eq:evolve}
\mu_n ( f)  = \mu_0( f \circ T^n) \,,
\end{equation}
for any bounded measurable $f$.  

We introduce next the concept of a non-equilibrium steady state following Ruelle \cite{Ru99}. 
\begin{defn} 
A probability measure $\mu_+$ is called a non-equilibrium steady state for the  dynamical system
$(M,T)$ with reference measure $\mu_0$ if: 
\begin{enumerate}
\item the measure $\mu_+$ is an ergodic invariant measure for $T$;
\item for $\mu_0$-almost every initial condition $x \in M$ the empirical measure $\frac{1}{n} \sum_{k=0}^{n-1} \delta_{T^k(x)}$ converges weakly to $\mu_+$ as $n \to \infty$; 
\item the measure $\mu_+$ is singular with respect to $\mu_0$. 
\end{enumerate}
\end{defn} 
Item (2) in the definition selects one invariant measure $\mu_+$ among the usually many invariant 
measures of the dynamical system $(M,T)$ and it is essentially equivalent to the SRB property in the 
theory of  hyperbolic  dynamical systems if $M$ is a smooth manifold and $\mu_0$ is Lebesgue measure.
Measures satisfying (2) are also often called ``physical measures" as they describe the statistics of 
``most" initial conditions.  Item (3) in the definition ensures that   the invariant  measure is truly a 
``non-equilibrium" steady  state in the sense of statistical mechanics,  
while if $\mu_+$ were equivalent to $
\mu_0$ it should rather be called an equilibrium steady state. Finally in a physical context where $T=T_{\textbf{E}}$ depends on external forces, the non-equilibrium steady state $\mu_+$ depends 
on $\textbf{E}$ and we will use the notation $\mu_{\textbf{E}}$ in that case.

Next we turn to the concept of the entropy production observable $s: M \to \mathbb{R}$ 
which plays a central role in non-equilibrium statistical mechanics.  We make the (rather weak) regularity assumption that  $\mu_n$ and $\mu_0$ are mutually absolutely continuous and denote by $l_n$ the logarithm of the Radon-Nykodym derivative,
\[
l_n = \log \frac{d \mu_n}{d\mu_0} \,.
\]
Since 
$
\mu_{n+m}(f) = \mu_m(f\circ T^n) = \mu_0( e^{l_m} f\circ T^n) = \mu_n( e^{l_m\circ T^{-n}} f) = \mu_0( e^{l_n} e^{l_m\circ T^{-n}} f)$,
we have the chain rule, $ l_{n+m} = l_n + l_m \circ T^{-n}$,
and in particular, $l_{-1} =- l_1 \circ T$. Therefore, we have 
\[
l_n = \sum_{k=0}^{n-1}  l_1 \circ T^{-k} \,. 
\]
For two probability measures $\mu$ and $\nu$ on $M$, let us denote by $R(\mu | \nu)$ the 
relative entropy of $\mu$ with respect to $\nu$ (also known as the Kullback-Leibler divergence) 
which is defined by 
\[
R(\mu | \nu) \,=\, \left\{ 
\begin{array}{cl}   
\int \log \frac{d\mu}{d\nu}   d\mu & \textrm{if } \mu \ll \nu \\
+\infty     &   \textrm{otherwise} 
\end{array}
\right. \,.
\]
We have then
\[
R(\mu_n | \mu_0) = \mu_n ( l_n) = \mu_n \left(\sum_{k=0}^{n-1}  l_1 \circ T^{-k}\right) \,=\, \mu_0 \left( \sum_{k=1}^n l_1 \circ T^k \right) ,
\]
using \eqref{eq:evolve}.
This leads to the following definition.
\begin{defn}
The entropy production observable for the  dynamical  $(M,T)$ with reference measure $\mu_0$ is  
is given by 
\[
s = l_1 \circ T \,.
\]
\end{defn}
If we assume the existence of a non-equilibrium steady state and if the entropy production observable 
$s$ is regular enough we have  
\[
\lim_{n \to \infty} \frac{1}{n} R(\mu_n| \mu_0) =  \lim_{n \to \infty} \mu_0\left(\frac{1}{n} \sum_{k=0}^{n-1} s \circ T^k\right) =\mu_+(s)  \ge 0 \,,
\]
since the relative entropy is non-negative.  This general fact is known as the non-negativity of 
the entropy production rate in non-equilibrium steady states.  It is shown in \cite[Section 5]{luc entropy}  that, under quite general 
conditions, we have 
\[
\mu_+(s)>0 \textrm{ if and only if } \mu_+ \textrm{ is singular with respect to } \mu_0\,.
\]
We expect in any case that, for  a bona fide non-equilibrium steady state,  
we have positivity of entropy  production, i.e., $\mu_+(s)>0$, a fact   
which usually requires some non-trivial analysis.  
We prove this result in the context of the Lorentz gas under external forces as part of 
Theorem~\ref{thm:moment}.

An important example in the context of this paper is when the state space $M$ is a smooth manifold, 
$\mu_0$ is a measure with a smooth density with respect to Lebesgue measure on $M$, and $T$ 
is  a (piecewise) smooth transformation. In this case the change of variable formula gives 
\[
e^{l_n} \,=\, \frac{1}{J_{\mu_0}T^n \circ T^{-n}}\,,  
\]
where $J_{\mu_0}T$ is the Jacobian of the map $T$ with respect to $\mu_0$ and  
therefore
\[
s = - \log J_{\mu_0}T \, ,
\]
which can be interpreted as describing a phase space contraction rate.  We refer to \cite{luc entropy} for 
various other examples.

The fluctuation theorem asserts that the fluctuations of the ergodic averages of the  entropy production 
have a universal symmetry under the condition that the system is invariant under-time reversal.
\begin{defn} The  dynamical system $(M,T)$ with reference measure $\mu_0$ is  time-reversal invariant 
if there exists an involution $i: M \to M$  (that is, $i \circ i$ is the identity) such that, 
\begin{enumerate}
\item $\mu_0$ is invariant under $i$, i.e., $\mu_0( f \circ i) = \mu_0(f)$;   
\item $i \circ T \circ i = T^{-1}$. 
\end{enumerate}
\end{defn}
Using the time reversal property, we have for any bounded measurable $f$, 
\[
\mu_0(e^{l_{-n}} f)   = \mu_{0} (f \circ T^{-n} ) = \mu_{0} (f \circ T^{-n} \circ i) = \mu_{0} (f \circ  i \circ T^{n}) = \mu_0(e^{l_n} f \circ i) =  \mu_0(e^{l_n \circ i} f ) ,
\]
and hence
\begin{equation}
\label{eq:ell n}
l_{-n} = l_n \circ i\,.
\end{equation}
Using this it is straightforward to derive the so-called transient fluctuation theorem \cite{ES1,luc entropy} (also called the Evans-Searles fluctuation theorem). We give a proof  here for the convenience of the reader.

\begin{proposition}[\bf Transient fluctuation theorem]\label{ess}  Suppose the dynamical system
$(M,T)$ with reference measure $\mu_0$ is time-reversal invariant  and $s$ is the  entropy production observable.
Then we have the symmetry
\[
\mu_0\left( e^{- a \sum_{k=0}^{n-1} s \circ T^k} \right) \,=\, \mu_0\left( e^{- (1-a) \sum_{k=0}^{n-1} s \circ T^k} \right)  ,
\]
for any $a \in \mathbb{R}$ for which both integrals are finite.
\end{proposition}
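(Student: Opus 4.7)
The plan is to reduce the desired symmetry to an algebraic identity using the three ingredients already recorded: the chain rule $l_{n+m}=l_n+l_m\circ T^{-n}$, the time-reversal identity $l_{-n}=l_n\circ i$ from \eqref{eq:ell n}, and the fact that $\mu_0$ is $i$-invariant while $\mu_n$ is absolutely continuous with respect to $\mu_0$ with density $e^{l_n}$.

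The first step is to rewrite the ergodic sum $S_n := \sum_{k=0}^{n-1} s\circ T^k$ as a single Radon--Nikodym cocycle. Since $s = l_1\circ T$ and $l_n = \sum_{k=0}^{n-1} l_1\circ T^{-k}$, a reindexing yields
\[
S_n \,=\, \sum_{k=0}^{n-1} l_1\circ T^{k+1} \,=\, \sum_{j=1}^{n} l_1\circ T^{j} \,=\, l_n\circ T^n.
\]
Using then $\mu_n(f)=\mu_0(f\circ T^n)$ together with $d\mu_n/d\mu_0 = e^{l_n}$, we obtain
\[
\mu_0\bigl(e^{-a S_n}\bigr) \,=\, \mu_0\bigl(e^{-a\, l_n\circ T^n}\bigr) \,=\, \mu_n\bigl(e^{-a l_n}\bigr) \,=\, \mu_0\bigl(e^{(1-a)l_n}\bigr).
\]

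The second step is to invoke the involution $i$. Because $\mu_0$ is $i$-invariant and $l_n\circ i = l_{-n}$,
\[
\mu_0\bigl(e^{(1-a)l_n}\bigr) \,=\, \mu_0\bigl(e^{(1-a)l_n}\circ i\bigr) \,=\, \mu_0\bigl(e^{(1-a)l_{-n}}\bigr).
\]
To bring this back to $S_n$, one needs the identity $l_{-n}=-l_n\circ T^n$. This follows by setting $m=-n$ in the chain rule: $0 = l_0 = l_n + l_{-n}\circ T^{-n}$, hence $l_{-n}\circ T^{-n} = -l_n$, i.e.\ $l_{-n} = -l_n\circ T^n = -S_n$. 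Substituting gives
\[
\mu_0\bigl(e^{(1-a)l_{-n}}\bigr) \,=\, \mu_0\bigl(e^{-(1-a)S_n}\bigr),
\]
and chaining the three equalities yields the claimed symmetry.

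There is essentially no analytic obstacle here: the proposition is a clean algebraic consequence of the cocycle and time-reversal identities, once one recognizes that $S_n$ is nothing more than the log Radon--Nikodym derivative $l_n$ pushed forward by $T^n$. The only care needed is bookkeeping of signs and indices when passing between $l_n$, $l_{-n}$, $l_n\circ T^n$, and $l_n\circ i$, and ensuring that the integrals involved are finite for the given $a$ (which is part of the hypothesis). The real content of the fluctuation story, namely the passage from this transient identity to a \emph{stationary} statement with respect to the SRB measure $\mu_+$, is postponed to the later sections where the Banach-space machinery is brought to bear.
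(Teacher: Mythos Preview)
Your proof is correct and follows essentially the same route as the paper's own argument: both identify $S_n = l_n\circ T^n$, pass to $\mu_0(e^{(1-a)l_n})$ via the density $e^{l_n}$, and then close the loop using $l_{-n}=l_n\circ i$ together with $l_{-n}=-S_n$. The only differences are cosmetic ordering of the same identities.
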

\begin{proof}
First we use that by the chain rule, 
%\begin{equation}\label{eq:es1}
\[
l_{-n} = - l_n \circ T^n= - \sum_{k=1}^n l_1\circ T^k = - \sum_{k=0}^{n-1} s \circ T^k .
\]
%\end{equation}
Thus without the assumption of time reversal, we have by \eqref{eq:evolve}  
\begin{equation}\label{eq:es2}
\mu_0(e^{- a \sum_{k=0}^{n-1} s \circ T^k}) 
= \mu_0( e^{- a l_{n}\circ T^n}) = \mu_n(e^{- a l_{n}}) = \mu_0(e^{(1- a) l_{n}})\,.
\end{equation}
On the other hand time reversal  implies by \eqref{eq:ell n} that,
\begin{equation}\label{eq:es3}
\mu_0(e^{(1-a) l_n}) = \mu_0(e^{(1-a) l_{-n}\circ i}) = \mu_0(e^{(1-a) l_{-n}})
= \mu_0 (e^{-(1-a) \sum_{k=0}^{n-1} s \circ T^k}) \, .
\end{equation}
Combining \eqref{eq:es2} and \eqref{eq:es3} gives the desired symmetry. 
\end{proof}

The transient fluctuation theorem has the following interpretation (Proposition 3.3 of \cite{luc entropy}):
if $P_n(z)$ denotes the probability distribution of $\sum_{k=0}^{n-1} s \circ T^k$ with initial distribution $\mu_0$ and $\tau(z) = -z$ then we have 
\[
\frac{dP_n }{dP_n \circ \tau} = e^{nz},
\]
which gives a universal ratio for the probabilities to observe an average entropy production rate 
equal to $+z$ or $-z$.   

By contrast the Gallavotti-Cohen (steady state) fluctuation relation deals with the fluctuation starting in 
the non-equilibrium steady state $\mu_+$.   To state it we define, for any probability measure $\nu$,  
the logarithmic moment generating function
\[
e_\nu(a) = \lim_{n \to \infty} \frac{1}{n} \log \nu \left( e^{- a \sum_{k=0}^{n-1}
 s \circ T^k} \right) ,
\]
provided the limit exists.

\medskip 
\noindent
{\bf Steady state fluctuation relation}.   {\em The  dynamical system $(M,T)$ with reference measure 
$\mu_0$ and non-equilibrium steady state $\mu_+$ satisfies the steady state fluctuation relation 
if for some $a_0 > 0$ and all $a \in [-a_0, 1+a_0]$: 
\begin{enumerate}
\item the limit defining the logarithmic moment generating function exists,
\[
e_{\mu_+}(a) \,=\, \lim_{n \to \infty} \frac{1}{n} \log \mu_+ \left( e^{- a \sum_{k=0}^{n-1}
 s \circ T^k} \right) ;
\] 
\item the moment generating function has the following symmetry, 
\[
e_{\mu_+}(a) = e_{\mu_+}(1-a) .
\]
\end{enumerate}
}

The transient and steady state fluctuation relations look similar, yet are distinct statements. 
In particular, the transient fluctuation theorem is a finite time statement, valid even in the absence of a 
steady state.  Even if we assume that the limit $e_{\mu_0}(a)$ exists (a nontrivial statement), 
one cannot expect, in  general, that  $e_{\mu_0}(a) = e_{\mu_+}(a)$ even if $\mu_+$ is a steady state (with reference measure $\mu_0$) (see e.g. \cite{CG} for a counterexample).   
There certainly are examples where these two functions coincide, e.g. for Anosov diffeormorphisms 
(see e.g. \cite{luc entropy}) and indeed one of the main contributions of this paper is to 
prove that for 
billiards under small external forces the limits $e_{\mu_0}(a)$ and $e_{\mu_+}(a)$ exist and coincide
for a non-perturbative range of values of the parameter $a$.

To conclude we briefly discuss the large deviation interpretation of the symmetries.  From the theory of 
large  deviations, it is well known that 
if $e_\nu(a)$ is $\C^1$ on an interval $a \in [-a_0, 1+a_0]$,  then by the Gartner-Ellis theorem (see \cite{DZe}) we have a large 
deviation principle for the ergodic averages $\frac{1}{n} \sum_{k=0}^{n-1} s \circ T^k$, 
with initial condition distributed according to $\nu$, i.e., 
$$
\lim_{\delta \to 0}\lim_{n\to\infty} \frac{1}{n} \log \nu\left(x: \frac{1}{n}\sum_{k=0}^{n-1} s \circ T^k \in [z-\delta, z+\delta]\right)=-I(z),$$
for any $z\in [e'_\nu(-a_0), e'_\nu(1+a_0)]$, where $I:\mathbb{R}\to[0,\infty]$ 
is the rate function given by the Legendre transform 
\[
I(z) \,=\,  \sup_{- a_0 \le a \le 1+a_0} \{ a z - e_\nu(a) \}\,.
\]
The symmetry  $e_{\nu}(a) =e_{\nu} (1-a)$ implies that rate function $I(z)$ has the symmetry 
\begin{equation}
\label{eq:sym I}
\begin{split}
I(z)  & = \sup_{- a_0 \le a \le 1+a_0} \{ az - e_\nu(a) \} = \sup_{-a_0 \le a \le 1+a_0} \{ az - e_\nu(1-a)\} \\
& = \sup_{-a_0 \le b \le 1+a_0} \{ (1-b) z - e_\nu(b)\} 
\,=\, I(-z) -z \,.
\end{split} 
\end{equation}
The symmetry of the rate function $I(z)-I(-z)=-z$ implies that the ratio of  probabilities 
to observe an entropy production rate equal to $z$ and equal to $-z$ over a time interval of length $n$ is asymptotically equal to $e^{nz}$.

One can also show that the fluctuation relation does imply the Kubo formula for the linear response of 
currents, but we shall not discuss this further here (see e.g. \cite{LS,Ma99,Ma03,luc entropy}). 

%
%
%
%
%{\bf MD:  Luc, maybe you can add some introductory words here, motivating the problem and
%drawing on your paper \cite{luc entropy}?  Also the connection to large deviations of the
%observable $\log J_{\mu_0} T$.  Maybe something about the connection between
%$\log \mu_0(J_{\mu_0} T)$ versus $\mu_0 (\log J_{\mu_0} T)$.}
%
%Let $T:M\to M$ be a billiard map with a mixing SRB measure $\mu$.
%Given a H\"{o}lder continuous observable $g:M\to \mathbb{R}$, let
%$$S_ng(x) =\sum_{k=1}^n g\circ T^k(x)$$ If $x$ is distributed according to
%$\mu$, then $(1/n)S_n g(x)$
%is a mixing stationary sequence of random variables. By the Birkhoff
%ergodic theorem,
%$(1/n)S_ng$ converges almost surely and in probability (with respect to
%$\mu$) to the mean
%value $\mu(g)$. The theory of large deviations provides exponential bounds
%on the
%probability that $S_ng/n$ takes values away from the mean $\mu(g)$. These
%estimates are
%typically expressed in the form
%$$\lim_{\eps\to 0}\lim_{n\to\infty} \frac{1}{n} \log \mu\left(x: \frac{1}{n}S_n
%g(X)\in [a-\eps, a+\eps]\right)=-I(a)$$
%for any $a\in \mathbb{R}$, where $I:\mathbb{R}\to[0,\infty]$ is the rate
%function.
%
%Entropy production observable is $- \log J_{\mu_0}T$.
%
%
%%%%%%%%%%%%%%%%%%%%%%%%%%%%%%%%%%%%%%%%%%%%

%%%%%%%%%%%%%%%%%%%%%%%%%%%%%%

\section{Description of Model and Main Results}
\label{sec:results}

Let $d>1$, we define a periodic Lorentz gas by placing finitely many
closed, convex regions (scatterers) $\Gamma_i$, $i=1, \ldots d$,
on a Torus $\mathbb{T}^2$, which are pairwise disjoint and have $\C^3$ boundaries with strictly positive
curvature.
The classical billiard flow on the table
$\mathbb{T}^2 \setminus \cup_i \{ \mbox{interior } \Gamma_i \}$
is defined by the motion of a particle traveling at unit speed and undergoing elastic
collisions at the
boundaries.  In this paper we will also consider the motion of particles
subject to external forces,
as well as certain types of collisions which do not obey the
usual law of reflection.

The discrete-time billiard map $T$ associated with the flow is the Poincar\'e map
corresponding to collisions with the scatterers.  At each collision, we record the position 
according to an arclength parameter $r$ (oriented clockwise on the boundary of each scatterer)
and the angle $\vf$ made by the outgoing (post-collision) velocity with the unit normal to the boundary 
at the point of collision.  
The phase space of the map is thus $M = \cup_{i=1}^d I_i \times [-\pi/2, \pi/2]$,
where each $I_i$ is an interval with endpoints identified and with length equal to the arclength
of $\partial \Gamma_i$.

For any $x = (r,\vf) \in M$, define $\tau(x)$ to be the free path of the first collision
of the
trajectory starting at $x$ under the billiard flow. The billiard map is
defined wherever
$\tau(x) < \infty$. We say that the billiard has finite horizon if there
is an upper bound
on the function $\tau$. Otherwise, we say the billiard has infinite
horizon.
Notice that the function $\tau$ depends on the (possibly curved) trajectories of 
particles in $\mathbb{T}^2$, 
while $M$ is independent of the trajectories; thus we may study many classes of perturbations
of a billiard flow while fixing $M$.

We will denote by $d\mu_0 = c_0 \cos \vf dr d\vf$ the smooth invariant probability measure
which is preserved by the unperturbed billiard map, where $c_0$ is the normalizing constant.

%%%%%%%%%%%%%%%%%%%%%%%%%%%%%%

\subsection{Assumptions}
\label{model}

In this subsection we  first state the assumptions on the model, following \cite{CZZ} 
(which in turn combines the assumptions in \cite{CZ09, Z11, DZ2}). 

Let $\bq = (x,y)$ be the position of a particle in the billiard table
$Q := \mathbb{T}^2 \setminus (\cup_i \Gamma_i)$ and
$\p$ be the velocity vector.  
We may define a perturbed billiard flow on $Q$ as follows.
Between collisions, the position and velocity obey the following differential equation,
\begin{equation}\label{flowf}
    \frac{d \bq}{dt} =\p(t) , \qquad
    \frac{d \p}{dt} = \mathbf{F}(\bq, \p) ,
\end{equation}
where
$\mathbf{F}: \mathbb{T}^2 \times \mathbb{R}^2 \to \mathbb R^2$ 
is a $C^2$ stationary external force.
At collisions, the trajectory experiences possibly nonelastic reflections with slipping along
the boundary,
\begin{equation}
\label{reflectiong}
(\bq^+(t_i), \p^+(t_i)) = (\bq^-(t_i), \cR  \p^-(t_i)) +\mathbf G(\bq^-(t_i), \p^-(t_i)) ,
\end{equation}
 where $\cR \p^-(t_i)= \p^-(t_i)+2(n(\bq^-)\cdot \p^{-})n(\bq^-)$ is the usual reflection operator,
 $n(\bq)$ is the unit normal vector to the billiard wall $\partial Q$ at
 $\bq$ pointing inside the table $Q$, and $\bq^-(t_i), \p^-(t_i)$, $\bq^+(t_i)$ and
 $\p^+(t_i)$ refer to the incoming and
 outgoing position and velocity vectors, respectively.
 $\mathbf G$ is an external force acting on the
 incoming trajectories.
 We allow $\bf G$ to change both the position and the velocity of the particle at the moment
 of collision.  The change in velocity can be thought of as a kick or twist while a change in
 position can model a slip along the boundary at collision, or even reflection by a soft billiard
 potential \cite{balint toth}.

In \cite{Ch01, Ch08}, Chernov considered billiards under small external forces $\mathbf{F}$
with $\mathbf G=0$, and $\mathbf{F}$ to be stationary. In \cite{Z11} a twist force was considered assuming $\mathbf{F}=0$ and $\bG$
depending on and affecting only the velocity, not the position.
Here we follow \cite{DZ2, CZZ} and consider a combination of these two cases for systems under more general forces $\mathbf{F}$ and $\mathbf{G}$.

Let
$\bE=(\bF,\bG)$,
where $\bF$ and $\bG$ are the two external forces
during the flight and at collisions, respectively.
Let $\Phi_{\bE}^t$ be the induced billiard flow on $Q \times \mathbb{R}^2$
and denote by $T_{\bE} = T_{\bF, \bG}$ the corresponding billiard map.

\bigskip
\noindent\textbf{(A1)} (\textbf{Invariant space}) \emph{The
perturbed flow $\Phi_{\bE}^t$ preserves a smooth function $\cE(\bq,
\bp)$, such that the level surface $\cM:=\{\cE(\bq,\bp)=c\}$ is a
compact 3-D manifold, for some $c>0$.
Moreover, $\| \bp\|>0$ on $\cM$, and for each $\bq\in Q$ and $ \bp\in
S^1$, the ray $\{(\bq, t \bp), t>0\}$ intersects the manifold $\cM$ in
exactly one point.} 

\bigskip
Under assumption ({\textbf{A1}}), the system has an additional integral of
motion and we will consider the restricted system on a compact
phase space, $\cM \subset Q \times \mathbb{R}^2$. 
For example, if we add a Gaussian thermostat (a heat
bath) to the system such that the billiard moves at constant speed
(constant temperature if there are a large number of particles),
then $\cM:=\{\|\bp\|=c\}$ is an invariant compact level set.
More generally, the speed $p=\| \bp\|$ of the
billiard along any typical trajectory on $\cM$ at time $t$ satisfies
$$0<p_{\min}\leq p(t) \leq p_{\max}<\infty,$$
for some constants $p_{\min}\leq p_{\max}$. 
In addition, $\cM$ admits a
global coordinate system $\{(x,y,\theta):(x,y)\in
Q,0\le\theta<2\pi\}$, where $\theta$ is the angle between $\bp$ and
the positive $x$-axis. Thus the speed $p=\|\bp\|$ on $\cM$
can be represented as a function $p=p(x,y,\theta)$ and the
velocity $\bp$ at $\bq$ can be expressed as $\bp=p\bv$, where
$\bv=(\cos\theta, \sin\theta)$ is the unit vector in the
direction of $\bp$.  We can then rewrite eq.~\eqref{flowf} for the dynamics between
collisions  as 
\beq
\label{vpF} 
\dot \bq = \bp, \quad
\dot p \bv+p\dot\bv=\bF. 
\eeq 
Multiplying both sides of
(\ref{vpF}) by $\bv$ using the dot product and cross product respectively, we
obtain 
\beq
\label{pvF} \dot p= \bv\cdot \bF,\,\,\,\,\,\text{ and
}\,\,\,\,\,\,p\bv\times \dot\bv=\bv\times \bF. 
\eeq 
Therefore, using the notation $\bF = (F_1, F_2)$,  the
equations in (\ref{flowf}) have the following coordinate
representations at any $(x,y,\theta)\in\cM$, 
\beq
\label{flow}
\left\{
  \begin{array}{ll}
    \dot x=p\cos\theta, \\
    \dot y=p\sin\theta, \\
  \dot \theta=(-F_1\sin\theta+F_2\cos\theta)/p.
  \end{array}
\right.
\eeq
Next, consider a trajectory $\tilde\gamma\subset\cM$ of the flow passing
through the point $(x,y,\theta)\in\cM$,
which projects down to a smooth curve $\gamma\subset Q$.
We denote by $\kappa = \kappa(x,y,\theta)$ the (signed) geometric curvature of $\gamma$ at
$(x,y)\in Q$.
%{\bf MD: Changed $h$ to $\kappa$ since $h$ is already used throughout as an element of our
%Banach space.}
It follows that
\beq
\label{eq:formh}
\kappa(x,y,\theta)=\pm\frac{\|\dot\bq\times\ddot\bq\|}{\|\dot\bq\|^3}=\pm\frac{\|\bv\times
\bF\|}{p^2}
=\frac{-F_1\sin\theta+F_2\cos\theta}{p^2},
\eeq
where the sign should be chosen accordingly. Combining this with
(\ref{flow}), we have
\beq\label{ptheah}
\dot\theta= p \kappa.
\eeq
Note that the angle $\theta=\theta(t)$ is discontinuous at reflection
times: it jumps from $\theta^-$ to $\theta^+$.
In the case of elastic collisions, the quantities $x$, $y$ and $p$
remain unchanged.
By contrast, under the twisting force $\bG$, all quantities may change at collisions.\\

For any point $(x, y,\theta) \in \cM$, let
$\tau(x, y,\theta)$ be the time for the trajectory starting from
$(x, y, \theta)$ to make its next non-tangential collision at $\partial Q$. \\

\noindent(\textbf{A2}) (\textbf{Finite horizon})
\emph{There exist $\tau_{\max}>\tau_{\min}>0$ such that free paths between
successive non-tangential reflections are uniformly bounded:
$\tau_{\min} \le \tau(x,y,\theta ) \le \tau_{\max}$, for all $(x,y,\theta)
\in \cM$ with $(x,y) \in\pd Q$.}
 \\

\iffalse
Assumption (\textbf{A2}) implies that there exists a 2-D global
cross-section, the post-collision space $M$, of the perturbed
billiard flow $(\cM,\Phi_{\bP})$: $M=\{(\bq,\bp)\in
\cM:\bq\in\partial Q, \bp\cdot\bn(\bq)\ge0\}$, which consists all
outgoing vectors in $\cM$ based at the boundary of the billiard
table $Q$. Denote by $T_{\bP}: M\to M$ the Poincar\'e map induced by
the forced flow $\Phi_{\bP}$ on $\cM$. Moreover, the 2-D space $M$ can be
parameterized by $\bx=(r,\varphi)$, where $r$ is the arc-length parameter
of $\partial Q$ oriented clockwise, and $\varphi$ is the angle formed by
the outgoing
vector $\bp$ and the normal vector $\bn(\bq)$. This system preserves an
invariant e measure $d\mu=c_0 \cos\varphi dr\,
d\varphi$, where $c_0>0$ is a normalizing constant. The collision time is
denoted as $\tau(\bx)$. \\
\fi

\noindent(\textbf{A3}) (\textbf{Smallness of the external forces}).
\emph{There exists
$\eps>0$ small enough such that the forces
$\bE=(\bF,\bG)$ satisfy
$$\|{\bF}\|_{C^1}<\eps, \|\bG-\mathrm{Id}_M\|_{C^1}<\eps.$$
Moreover, there exist constants $\alpha_0 > 1/3$ and $C_{\bE} > 0$ such that 
$\| \bF \|_{C^{1+\alpha_0}}, \| \bG \|_{C^{1+\alpha_0}} \le C_{\bE}$.}

\begin{remark}
Note that (\textbf{A2}) also puts some implicit constraints on the smallness of
forces. In fact, the existence of $\tau_{\min}$ not only prevents touching
scatterers, but also implies the trajectory cannot be bent too much such that the
particle falls back to the same scatterer immediately.
\end{remark}

Let $\cI:\cM\to\cM$ be the 
involution
defined by $\cI(x,y,\theta)=(x,y,\pi+\theta)$. 
For a general flow
$\Phi^t:\cM\to\cM$,
the reversed flow of $\Phi^t$ is defined by $\Phi^t_{-}=\cI\circ
\Phi^{-t}\circ \cI$.
The flow $\Phi^t$ is said to be {\it time-reversible}, if
$\Phi^t_-=\Phi^t$.
It is well known that the unforced billiard flow is time-reversible. \\

\noindent(\textbf{A4}) (\textbf{Time-reversibility})
\emph{Both forces $\bF$ and $\bG$ are stationary,
and the forced billiard flow $\Phi_{\bE}^t$ is time-reversible.
Moreover, we assume that $\bG$ preserves tangential collisions:
$\bG(r,\pm \frac{\pi}{2})=(r,\pm \frac{\pi}{2})$. } \\

Note that due to ({\bf A4}), the singularity set of $T^{-1}_{\bF, \bG}$ is the same as that
of the untwisted map $T^{-1}_{\bF, \mathbf{0}}$. It also implies that the billiard map 
$T_{\bf E}$ is time-reversible.

Fix $\eps_0 >0$, $\tau_\ast \in (0,1)$, and $C_0 > 0$.  For the fixed billiard table $Q$, let 
$\cF(\eps_0, \tau_\ast, C_0)$ denote
the collection of all forced billiard maps defined by the dynamics \eqref{flowf} and 
\eqref{reflectiong} under the external forces $\mathbf{E}=(\bF,\bG)$ and satisfying assumptions
(\textbf{A1})--(\textbf{A4}), such that
$\tau_\ast\le\tau_{\min}\le\tau_{\max} \le \tau_\ast^{-1}$, $C_{\bE} \le C_0$,
and $\ve \le \ve_0$ in ({\bf A3}). 

In Section~\ref{class of maps} we define a class of maps satisfying uniform properties 
regarding hyperbolicity and singularities, {\bf (H1)}--{\bf (H5)}.
The following lemma from \cite{DZ2} is crucial in that respect.

\begin{lemma}$($\cite[Theorem 2.10]{DZ2}$)$
\label{lem:verify H}
Fix $\tau_\ast \in (0,1)$.  There exist $\ve_0, C_0 > 0$ such that 
the family of maps $\cF(\eps_0, \tau_\ast, C_0)$ satisfy
(\textbf{H1})--(\textbf{H5}) with uniform constants.
\end{lemma}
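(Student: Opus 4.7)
The plan is essentially to invoke the work of the first and third authors in \cite{DZ2}, so the task is to verify that the family $\cF(\eps_0, \tau_\ast, C_0)$ defined here matches the hypotheses of \cite[Theorem 2.10]{DZ2} and that the constants in (\textbf{H1})--(\textbf{H5}) can be chosen uniformly across the family. First I would recall that the unperturbed Sinai billiard map $T_{0,\mathbf{0}}$ satisfies (\textbf{H1})--(\textbf{H5}) with constants depending only on the table geometry $Q$; this is classical and relies on dispersing geometry together with finite horizon. The job is then to show that these constants are stable under perturbations by $(\bF,\bG)$ satisfying (\textbf{A1})--(\textbf{A4}) with the prescribed bounds.

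The key technical input is invariant cone fields. For the unperturbed map, the standard unstable cone $\{d\vf/dr \in [\kappa_{\min},\infty]\}$ (suitably rewritten in $(x,y,\theta)$ coordinates) is strictly invariant with a uniform expansion rate determined by $\tau_{\min}$ and the curvature bounds. The evolution of tangent vectors along the perturbed flow is governed by \eqref{flow}--\eqref{ptheah}, and a direct computation shows that the change in slope of a tangent vector over one flight-and-collision depends continuously on $\bF,\bG$ in the $C^1$ topology. Consequently, provided $\ve_0$ is small enough (relative to $\tau_\ast$ and the intrinsic curvature of $\partial Q$), the unperturbed cone is still strictly invariant and the associated expansion factor on unstable vectors remains uniformly bounded below by a constant depending only on $\tau_\ast$ and $C_0$. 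This takes care of the hyperbolicity part of (\textbf{H1}).

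Next I would verify the structure and complexity of the singularity set required by (\textbf{H2}) and (\textbf{H5}). Assumption (\textbf{A4}) ensures that $T^{-1}_{\bF,\bG}$ has the same singularity set as $T^{-1}_{\bF,\mathbf 0}$, which in turn is a $C^2$-small perturbation of the singularities of $T^{-1}_{0,\mathbf 0}$ because the flow \eqref{flow} depends $C^{1+\alpha_0}$ on initial data with bounds in terms of $C_0$ only. Hence the singularity curves remain transverse to the stable cone and the one-step complexity bound used in \cite{DZ2} (an iterate $T^{n_0}$ for which the number of singularity curves meeting any short unstable curve is dominated by the minimal expansion) holds with the same $n_0$ for all sufficiently small $\ve_0$. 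Distortion bounds (\textbf{H3}) follow from the $C^{1+\alpha_0}$ regularity stipulated in (\textbf{A3}) combined with the standard homogeneity-strip decomposition; again, all relevant distortion constants depend only on $C_0$ and $\tau_\ast$. The regularity of the stable and unstable curves (\textbf{H4}) is preserved because the invariant cones are $C^{1+\alpha_0}$-close across the family.

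The main obstacle is genuinely the uniformity of the constants rather than their existence for each individual map, and the delicate point is the one-step expansion estimate underlying (\textbf{H5}), since it balances the minimum expansion on unstable vectors against the maximal complexity of cuts by the singularity set. This is precisely the point where the interplay between $\tau_\ast$, $C_0$, and $\ve_0$ must be tracked carefully: $\tau_\ast$ controls the complexity from below and the expansion from below, $C_0$ bounds the possible distortion of singularity curves, and $\ve_0$ must be chosen small enough that the perturbed expansion still dominates the perturbed complexity. Once this quantitative balance is verified, the lemma follows from \cite[Theorem 2.10]{DZ2}.
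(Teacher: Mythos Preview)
The paper gives no proof of this lemma at all: it is stated as a direct citation of \cite[Theorem~2.10]{DZ2}, and the present paper simply invokes that result. Your proposal to sketch the verification is therefore not what the paper does, though it is a reasonable outline of the content of the cited theorem.

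That said, your sketch has a concrete error: you have permuted the labels (\textbf{H3})--(\textbf{H5}). In this paper, (\textbf{H3}) is the one-step expansion/complexity bound, (\textbf{H4}) is bounded distortion, and (\textbf{H5}) is the control of the Jacobian $(J_{\mu_0}T)^{-1} \le \eta$. Your discussion assigns distortion to (\textbf{H3}), curve regularity to (\textbf{H4}), and the one-step expansion to (\textbf{H5}), so the ``delicate point'' you identify as (\textbf{H5}) is actually (\textbf{H3}). The genuine (\textbf{H5}) is in fact the easiest to verify here: by Lemma~\ref{TFJ} one has $J_{\mu_0}T_{\bE} = 1 + \ve H$ with $|H|_\infty \le C_H$ uniformly, so $(J_{\mu_0}T_{\bE})^{-1}$ is as close to $1$ as desired for small $\ve_0$, and the required bound $\eta < \min\{\Lambda^\beta, \Lambda^\gamma, \theta_*^{p-1}\}$ follows immediately. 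You do not address this condition at all.
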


%%%%%%%%%%%%%%%%%%%%%%%%%%%%%%%%%%%%

\subsection{Transfer operators}
\label{sec:transfer}

In this section, we fix a class of maps $\cF$ with uniform properties {\bf (H1)}--{\bf (H5)}
as defined Section~\ref{class of maps}.  Later, we will specialize to a particular
family $\cF = \cF(\ve, \tau_\ast, C_0)$ satisfying {\bf (A1)}--{\bf (A4)} above.

Let $\widehat{\cW}^s$ be the set of stable curves invariant under maps in
$\cF$ according to {\bf (H2)}, and let $\cW^s \subset \widehat{\cW^s}$ denote those stable curves
having length less than $\delta_0$, where $\delta_0$ is from \eqref{eq:one step contract}.  
For any $T \in \F$,
we define scales of spaces
using the set of stable curves $\cW^s$ on which the
{\em transfer operator} $\Lp_T$ associated with $T$ will act.
Define $T^{-n}\cW^s$
to be the set of homogeneous stable curves $W$ such that $T^n$ is smooth
on $W$ and
$T^iW \in \cW^s$ for $0 \leq i \le n$.   It follows from {\bf (H2)}  that
$T^{-n}\cW^s \subset \cW^s$.

For $W \in T^{-n}\cW^s$, a complex-valued test function $\psi: M \to
\mathbb{C}$, and $0<\alpha \le 1$ define $H^\alpha_W(\psi)$ to be
the H\"older constant of $\psi$ on $W$ with exponent $\alpha$ measured in the
Euclidean metric.
Define $H^\alpha_n(\psi) = \sup_{W \in T^{-n}\cW^s} H^\alpha_W(\psi)$
and let $\tilde{\C}^\alpha(T^{-n}\cW^s) = \{ \psi : M \to \mathbb{C} \mid
H^\alpha_n(\psi) < \infty \}$,
denote the set of complex-valued functions which are H\"older continuous
on elements of
$T^{-n}\cW^s$.
The set $\tilde{\C}^\alpha(T^{-n}\W^s)$ equipped with the norm
$| \psi |_{\C^\alpha(T^{-n}\W^s)} = |\psi|_\infty + H^\alpha_n(\psi)$ is a Banach
space.
Similarly, we define $\tilde \C^\alpha(\widehat \W^u)$ to be the set of functions
which are H\"older continuous
with exponent $\alpha$ on unstable curves $\widehat \W^u$.

It follows from the uniform hyperbolicity of $T$ (see {\bf (H1)}) that if $\psi \in
\tilde{\C^\alpha}(T^{-(n-1)}\cW^s)$, then
$\psi \circ T \in \tilde{\C}^\alpha(T^{-n}\cW^s)$.  Thus
if $h \in(\tilde \C^\alpha(T^{-n}\cW^s))'$, is an element of the dual of $\tilde
\C^\alpha(T^{-n}\cW^s)$,
then
$\Lp_T :(\tilde \C^\alpha(T^{-n}\cW^s))'\to (\tilde \C^\alpha(T^{-(n-1)}\cW^s))'$ acts
on $h$ by
\[
 \Lp_T h(\psi) := h(\psi \circ
T) \quad \forall \psi \in \tilde \C^\alpha(T^{-(n-1)}\cW^s).
\]
Recall that $d\mu_0 = c \cos \vf dr d\vf$ denotes the smooth invariant
measure for the billiard map corresponding to the unperturbed periodic
Lorentz gas.  
If $h \in L^1(M,\mu_0)$, then $h$ is canonically identified with a signed
measure
absolutely continuous with respect to $\mu_0$, which we shall also call $h$,
i.e.,
$
h(\psi) = \int_M \psi h \, d\mu_0.
$
With the above
identification, we write $L^1(M,\mu_0) \subset (\tilde \C^\alpha(T^{-n}\cW^s))'$
for each $n \in \N$.
Then restricted to $L^1(M,\mu_0)$, $\Lp_T$ acts according to the
familiar expression
\[
\Lp_T^n h = \frac{h \circ T^{-n}}{J_{\mu_0}T^n \circ T^{-n}}  \; \; \;
\mbox{for any $n \geq 0$ and $h \in L^1(M,\mu_0)$,}
\]
where $J_{\mu_0}T$ is the Jacobian of $T$ with respect to $\mu_0$.

%Although we will use $\mu_0$ as a reference measure throughout this paper,
%following \cite{DZ2},
%we do not assume that $\mu_0$ is an invariant measure for $T \in \F$;
%indeed, the SRB
%measures for such $T$ are in general singular with respect to Lebesgue
%measure. 

In Section~\ref{norms}, we define Banach spaces of distributions
$( \B, \| \cdot \|_\B )$ and $(\B_w, | \cdot |_w )$, preserved under the
action
of $\Lp_T$,  such that the unit ball of $\B$ is compactly embedded
in $\B_w$.  It follows from \cite[Corollary 2.4]{DZ2} that for $\ve$ sufficiently
small, $\Lp_T$ has a spectral gap on $\B$.

To study large deviations we will need a suitable weighted transfer operator.  
In order to have a well defined operator on $\B$ we will assume that is $g:M\to \R$ is (piecewise) 
H\"older continous on the connected components  of $M \setminus \mathcal{S}_1^T$ where 
$\mathcal{S}_1^T$ is the set of discontinuities of $T$ (see Sections \ref{common} \and \ref{recall property} for details).  
Under these assumptions it is shown in Lemma \ref{lem:L well defined}  that  we can define the 
weighted  transfer operator  $\Lp_{T,g}$ associated with $T$ and $g$ on $\B$ and $\B_w$ by
\begin{equation}
\label{eq:gen trans}
\Lp_{T,g}h(\psi):= \Lp_T (he^g) (\psi) = h( e^g \cdot \psi \circ
T), \;\; \; \mbox{for $h \in \B_w$ and suitable test functions } \psi.
\end{equation}
The family of transfer operators $\Lp_{T, ag}$ parametrized by $a\in \mathbb{R}$ 
occurs naturally in studying the large deviations of  Birkhoff sums 
$S_ng=g+\cdots +g\circ T^{n-1}$:  since we have 
\[
\Lp_{T, ag}^n h(\psi) = h(e^{a S_ng} \psi \circ T^n) \,,
\]
the logarithmic moment generating function of $S_n g$  with initial distribution  
$\nu \in \B$ is then given by 
\[
\log \nu(e^{a S_n g})= \log \Lp_{T, a g}^n \nu(1)\,.
\]
Suitable spectral gap conditions on  $\Lp_{T, a g}$ imply 
that the limit 
\[
e_\nu(a)=\lim_{n\to\infty}\frac{1}{n}\log \nu(e^{a S_n g})  =  \lim_{n\to\infty}\frac{1}{n} \log \Lp_{T, a g}^n \nu(1)\
\]
exists and is smooth and then large deviation estimates follow from the G\"artner-Ellis theorem
\cite{DZe}.  In this paper we shall be interested in particular in the choices $\nu=\mu_0$,
the SRB measure for the unperturbed Lorentz gas,  and $\nu=\mu_\mathbf{E}$, the SRB measure 
for the perturbed Lorentz gas $T_\mathbf{E}$, both measures belonging to $\B$. 

%
%
%
%Although we could prove large deviation  
%results for general $g$  we shall concentrate from now on the choice $g_0=\log J_{\mu_0}T$ 
%which is equal to negative the entropy production defined in Section \ref{sec:fluctuations}. 
%
%
%

\subsection{Statement of Results}
\label{results}

In \cite{DZ2}, local large deviation estimates for (piecewise) smooth observables $g$ 
results were obtained for small $\ve$ (small forces) and small $a$ 
(deviations very close to the mean of $g$); these were essentially 
perturbative results in $a$ and $\ve$.   By contrast here  we  concentrate on the observable 
$s =- \log J_{\mu_0}T_{\bf E}$, which is   the entropy production observable defined in  
Section \ref{sec:fluctuations}.  For the fluctuation symmetry to 
make sense we will need  the moment generating function to be well-defined for $a$ in a 
neighborhood of $[0,1]$. To this end, we will fix $a_0 >0$ and consider the interval $a \in [-a_0, 
1+a_0]$.  We study the dependence of  the spectral gap of $\Lp_{T, -as}$ as a function of the two 
parameters, $\ve$ and $a$. Since $s$ is fixed, in what follows we will use the more concise 
notation,   $\Lp_{T, a} = \Lp_{T, -as}$.  Note also that in the absence of external forces, $\mu_0$
is an invariant measure and $J_{\mu_0}T_0=1$.   More generally, for $T_{\bf E} = T_{(\bF, \bG)}$, 
we show in  Lemma \ref{TFJ} that 
\[
 J_{\mu_0}T_{\bf E} = 1 + \ve H
 \]
where $H$ is bounded uniformly in $\ve$, a key fact in our analysis.

The following spectral result is key to proving the existence and smoothness
of the limiting logarithmic moment generating function.

\begin{theorem}[Spectral gap]
\label{thm:uniform}
Choose $a_0 > 0$ and fix the parameters $C_0, \tau_*$ from Section~\ref{model}.  
There exists $\ve_0 > 0$ such that for any $T \in \F := \F(\ve_0, \tau_*, C_0)$,
 the operator
 $\Lp_{T,a}$ is well defined
as a bounded linear operator on $\B$ for all $a \in [-a_0, 1+a_0]$.  
In addition, there exists $C>0$,
such that for any $T \in \F$ and $n \geq 0$,
\begin{equation}
\begin{split}
\label{eq:uniform LY}
| \Lp_{T,a}^n h|_w & \leq C (1+\sign(a-1) C_H \ve)^{n(a-1)} |h|_w \qquad \mbox{for all
$h \in \B_w$}, \\
\| \Lp_{T,a}^n h \|_\B & \le C \sigma^n (1+\sign(a-1) C_H \ve)^{n(a-1)} \| h \|_\B + C \eta^n
|h|_w \qquad \mbox{for all $h \in \B$},
\end{split}
\end{equation}
where $C_H > 0$ is from Lemma~\ref{TFJ} and $\sigma\in(0,1)$ is from \eqref{eq:LY}. 
Moreover, for
each $T \in \F$,
\begin{itemize}
\item[(i)] $\Lp_{T,a}$ is quasi-compact as an operator on $\B$:  The spectral radius $\rho(\Lp_{T,a})$ lies in $[(1- \sign(a-1) C_H\eps_0)^{a-1}, (1+ \sign(a-1)C_H\eps_0)^{a-1}]$, while the 
essential
spectral radius $\rho_{\text{ess}}(\Lp_{T,a})$ is at most $\sigma (1+ \sign(a-1)C_H\eps_0)^{a-1} < (1- \sign(a-1) C_H\eps_0)^{a-1}$.
\item[(ii)] There exists $\ve_1 \le \ve_0$ such that for all $T \in \F(\ve_1, \tau_*, C_0)$ and all
$a \in [-a_0, 1+a_0]$,
$\Lp_{T, a}$ has a spectral gap:  there exists exactly one simple real eigenvalue
$\lambda_a=\rho(\Lp_{T,a})$; the corresponding eigenfunction $h_{a}$ is a
positive Borel measure.
\end{itemize}
\end{theorem}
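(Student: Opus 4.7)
The plan is to treat $\Lp_{T,a}=\Lp_{T,-as}$ as a weighted transfer operator on the anisotropic Banach spaces $\B$ and $\B_w$ of Section~\ref{norms}, with weight $e^{-as}=(J_{\mu_0}T_{\mathbf{E}})^a$.  Well-definedness on $\B$ follows from Lemma~\ref{lem:L well defined} once one observes, using Lemma~\ref{TFJ}, that for $\varepsilon_0$ small enough the weight $(J_{\mu_0}T_{\mathbf{E}})^a=(1+\varepsilon H)^a$ is piecewise H\"older on the components of $M\setminus\mathcal{S}_1^T$ and bounded away from $0$ and $\infty$ uniformly for $T\in\F$ and $a\in[-a_0,1+a_0]$.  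The chain rule for Jacobians yields $\sum_{k=0}^{n-1}s\circ T^k=-\log J_{\mu_0}T^n$, so
\[
\Lp_{T,a}^n h(\psi)=h\bigl((J_{\mu_0}T^n)^a\,\psi\circ T^n\bigr),
\]
and the two-sided bound $(1-C_H\varepsilon)^n\le J_{\mu_0}T^n\le (1+C_H\varepsilon)^n$ raised to the power $a-1$ produces exactly the growth factor $(1+\sign(a-1)C_H\varepsilon)^{n(a-1)}$ that appears in~\eqref{eq:uniform LY}.

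The analytic core of the argument is the Lasota--Yorke inequality~\eqref{eq:uniform LY}.  I would follow the proofs of the unweighted estimates in Appendix~\ref{appendix} and in \cite{demers zhang,DZ2} essentially verbatim, carrying the multiplicative weight $(J_{\mu_0}T^n)^a$ through each step.  For the weak norm one pairs $\Lp_{T,a}^n h$ against a test function $\psi$ on a stable curve $W\in\W^s$, decomposes $T^{-n}W$ into components $W_i$, and bounds
\[
\int_W \Lp_{T,a}^n h\cdot\psi = \sum_i\int_{W_i} h\,(J_{\mu_0}T^n)^{a-1}\cdot J_{\mu_0}T^n\,\psi\circ T^n\,;
\]
the factor $J_{\mu_0}T^n$ is absorbed into the change of variables as in the unweighted case, while the remaining $(J_{\mu_0}T^n)^{a-1}$ is pulled out as an $L^\infty$ supremum and contributes $(1+\sign(a-1)C_H\varepsilon)^{n(a-1)}$.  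The strong stable and unstable norm bounds are obtained in parallel, the extra $\sigma^n$ coming from the one-step stable contraction~\eqref{eq:one step contract}, and the H\"older regularity of $(J_{\mu_0}T^n)^a$ (uniform in $a$ since $a$ enters only as an exponent of a function bounded away from $0$ and $\infty$) controls the extra derivative terms, which are absorbed into the weak norm in the usual way.  Uniformity in $T\in\F$ is automatic because all hyperbolicity, distortion, and complexity constants depend only on $\tau_*$ and $C_0$ via Lemma~\ref{lem:verify H}.

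Part~(i) is standard from here: the compact embedding $\B\hookrightarrow\B_w$ from \cite{DZ2} combined with Hennion's theorem applied to~\eqref{eq:uniform LY} yields $\rho_{\text{ess}}(\Lp_{T,a})\le\sigma(1+\sign(a-1)C_H\varepsilon_0)^{a-1}$, and the weak norm inequality bounds $\rho(\Lp_{T,a})$ above.  For the matching lower bound I would anchor at $a=1$, where a direct change of variables gives the exact identity $\Lp_{T,1}\mu_0=\mu_0$ so $\rho(\Lp_{T,1})=1$; positivity of $\Lp_{T,a}$ on the cone of positive Borel measures together with continuity of the spectral radius in $a$ (itself a consequence of the uniform Lasota--Yorke bound and Keller--Liverani) places $\rho(\Lp_{T,a})$ in the stated interval.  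For small $\varepsilon_0$ the essential and full spectral radii are strictly separated, giving quasi-compactness.

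Part~(ii) follows from Keller--Liverani perturbation theory.  At $\varepsilon=0$, $J_{\mu_0}T_0\equiv 1$, so $\Lp_{T_0,a}=\Lp_{T_0}$ independently of $a$, with a simple leading eigenvalue $1$, positive eigenvector $\mu_0$, and a strict spectral gap by \cite[Cor.~2.4]{DZ2}.  The two-parameter family $(T,a)\mapsto\Lp_{T,a}$ satisfies the uniform Lasota--Yorke bound~\eqref{eq:uniform LY}, and the operator norm from $\B$ into $\B_w$ is continuous in the relevant perturbation parameters ($C^1$ distance between maps plus $|a-a'|$), so the gap persists throughout $\F(\varepsilon_1,\tau_*,C_0)\times[-a_0,1+a_0]$ for some $\varepsilon_1\le\varepsilon_0$.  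Simplicity of $\lambda_a$ and positivity of $h_a$ then follow from positivity of $\Lp_{T,a}$ together with topological mixing of $T$ by the usual Perron--Frobenius argument.  The main obstacle I expect is verifying the uniform Lasota--Yorke inequality in full: the interaction of the $a$-dependent weight with the complexity bounds for $T^{-n}\W^s$ and with the homogeneity-strip geometry of the billiard requires careful tracking, especially to ensure that the growth factor $(1+\sign(a-1)C_H\varepsilon)^{n(a-1)}$ is sharp and that the constants remain independent of both $T\in\F$ and $a\in[-a_0,1+a_0]$.
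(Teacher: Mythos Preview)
Your overall architecture is correct and close to the paper's: well-definedness via Lemma~\ref{lem:L well defined} and Lemma~\ref{TFJ}, the Lasota--Yorke inequalities by carrying the weight $(J_{\mu_0}T^n)^{a-1}$ through the estimates of Appendix~\ref{appendix} (this is exactly Proposition~\ref{prop:ly} specialized to $g=a\log J_{\mu_0}T$), and Hennion plus the compact embedding for the essential spectral radius.

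There are two places where you diverge from the paper and where your argument is weaker.

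\textbf{Lower bound on $\rho(\Lp_{T,a})$.} Your plan is to anchor at $a=1$ and propagate by continuity of the spectral radius in $a$ via Keller--Liverani. This is circuitous and does not actually deliver the explicit interval in~(i): continuity alone gives $\rho(\Lp_{T,a})\approx 1$ only for $a$ near $1$, not uniformly on $[-a_0,1+a_0]$, and Keller--Liverani controls isolated eigenvalues, not the spectral radius per se. The paper instead observes directly that $\Lp_{T,a}^n 1=(J_{\mu_0}T^n)^{a-1}\circ T^{-n}$ pointwise, so
\[
\|\Lp_{T,a}^n 1\|_s \ge \inf_M (J_{\mu_0}T^n)^{a-1}\cdot\|1\|_s \ge (1-\sign(a-1)C_H\varepsilon_0)^{n(a-1)}\|1\|_s,
\]
which immediately yields the lower bound $\rho(\Lp_{T,a})\ge(1-\sign(a-1)C_H\varepsilon_0)^{a-1}$ for every $a$. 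This one-line computation is what you are missing.

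\textbf{Spectral gap in (ii).} You invoke Keller--Liverani in the two parameters $(T,a)$, perturbing off $\Lp_{T_0,a}=\Lp_{T_0}$. This would work, but the paper takes a shorter route: it proves (Lemma~\ref{lem:analytic}) that $a\mapsto\Lp_{T,a}$ is analytic in the \emph{strong} operator norm on $\B$, and then uses $|s|_{\C^{1/3}(\cP_1)}=|\log(1+\varepsilon H)|_{\C^{1/3}(\cP_1)}=O(\varepsilon)$ to get $\|\Lp_{T,a}-\Lp_T\|_\B=O(\varepsilon)$ \emph{uniformly} in $a\in[-a_0,1+a_0]$ (eq.~\eqref{eq:close}). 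Since \cite{DZ2} already gives a uniform gap for $\Lp_T$ over $T\in\cF(\varepsilon_2,\tau_*,C_0)$, ordinary perturbation of isolated simple eigenvalues (not Keller--Liverani) suffices. The payoff is that one small parameter $\varepsilon$ controls everything; there is no separate smallness needed in $a$. Positivity of $h_a$ in the paper also avoids any appeal to topological mixing: it follows from $c_a(1)h_a=\lim_n\lambda_a^{-n}\Lp_{T,a}^n 1$ and the positivity of $\Lp_{T,a}^n 1$.
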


For $T_{\bE} \in \F(\ve_1, \tau_*, C_0)$, we discuss next the existence and properties of the logarithmic moment generating function 
for the entropy production observable $s = - \log J_{\mu_0}T_{\bf E}$ with respect to 
the non-equilibrium 
steady state $\mu_{\bf E}$,
\begin{equation}
\label{eq:moment}
e_{\bE}(a)=\lim_{n\to\infty}\frac{1}{n} \log \mu_{\bE}\left((J_{\mu_0}T_{\bE}^n)^a\right).
\end{equation}
We denote by $\sigma_{\bE}^2$ the diffusion constant for the sequence 
$\{\log J_{\mu_0}T_{\bf E} \circ T_{\bE}^n\}_{n \ge 0}$ distributed according to $\mu_{\bE}$,
and by $\sigma^2_H$ the diffusion constant for the sequence $\{H\circ T_0^n\}_{n \ge 0}$
distributed according to $\mu_0$.
%For $T_{\bE} \in \F(\ve_1, \tau^*, C_0)$, we consider the logarithmic moment generating function by, 
%\begin{equation}
%\label{eq:moment}
%e_{\bE}(a)=\lim_{n\to\infty}\frac{1}{n} \log \mu_{\bE}((J_{\mu_0}T_{\bE}^n)^a),
%\end{equation}
%where $\mu_{\bE}$ is the unique SRB measure for $T_{\bE}$.
%Setting $g_0=\ln J_{\mu_0} T_{\bE}$ and taking $H$ from Lemma~\ref{TFJ}, 
%we define $\sigma_{\bE}^2$ and $\sigma^2_H$ as the 
%diffusion constants for the sequences $\{g_0\circ T_{\bE}^n\}$ and 
%$\{H\circ T_{\bE}^n\}$, respectively, both of which are distributed according to $\mu_{\bE}$.
Our main results are summarized in the following theorem
\begin{theorem}[Logarithmic moment generating function and fluctuation relation]
\label{thm:moment}  
Under the assumptions  of Theorem \ref{thm:uniform},  we have the following.

\begin{enumerate}
\item The map $T_E$ has a unique SRB measure (non-equilibrium steady state) $\mu_{\bf E}$. 
\item The logarithmic moment generating function $e_{\bE}(a)$ for the entropy production 
exists and is analytic in the disk $|a| \le 1 + a_0$.  Moreover we have 
 \[
e_{\bE}(a)=\lim_{n\to\infty}\frac{1}{n} \log \mu_{\bE}\left((J_{\mu_0}T_{\bE}^n)^a\right) \,=\, \lim_{n\to\infty}\frac{1}{n} \log \mu_{0}\left((J_{\mu_0}T_{\bE}^n)^a\right)
\]
and, as a consequence, for  $a \in [-a_0, 1+a_0]$ we have the non-equilibrium steady state fluctuation relation
\begin{equation}
\label{eq:fluct}
e_{\bf E}(a)=e_{\bf E}(1-a)\,.
\end{equation}
\item  The logarithmic moment generating function $e_{\bf E}(a)$ is strictly convex if and only if 
$\log J_{\mu_0}T_{\bE}$ is not a coboundary for some $\psi \in L^2(\mu_{\bE})$, in which case we have 
\[
0 >  e'_{\bE}(0)=\mu_{\bE}(\log J_{\mu_0} T_{\bE})=\eps\mu_0(H)+ o(\eps) \textrm{ (Positivity of entropy production)},
\]
and 
\[
0< e''_{\bE}(0)=\sigma_{\bE}^2=\sigma_{H}^2\eps^2+o(\eps^2) \textrm{ (Positivity of diffusion coefficients)}.
\]
\end{enumerate}
%
%
%There exists a positive number $a_0>2$,
% such that the logarithmic moment generating function
%$e_{\bE}(a)$
%exists and is analytic in the disc $(|a|<a_0)$. Moreover
%$e'_{\bE}(0)=\mu_{\bE}(\ln| J_{\mu_0} T_{\bE}|)=\eps\mu_{\bE}(H)+\cO(\eps^2)$, $e''_{\bE}(0)=\sigma_{\bE}^2=\sigma_{H}^2\eps^2+o(\eps^2)$, and $e_{\bE}(a)$ is
%strictly convex for $a \in [-a_0, 1+a_0]$ if and only if $\sigma_{\bE}^2>0$
%if and only if $J_{\mu_0}T_{\bE}$ is not a coboundary for any $\psi \in L^2(\mu_{\bE})$.
\end{theorem}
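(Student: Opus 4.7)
\emph{Plan of proof.} The plan is to extract all three parts of the theorem from the spectral decomposition of the weighted transfer operators $\Lp_{T_{\bf E},a}$ given by Theorem~\ref{thm:uniform}, combined with the exact transient identity in Proposition~\ref{ess}. For each $a \in [-a_0, 1+a_0]$ write
\[
\Lp_{T_{\bf E},a}^n \,=\, \lambda_a^n \Pi_a \,+\, Q_a^n, \qquad \rho(Q_a) < \lambda_a,
\]
with $\Pi_a = h_a \otimes \ell_a$ the rank-one spectral projector onto the positive eigenmeasure $h_a$. At $a=0$ one has $\lambda_0 = 1$ and $h_0$ is a positive $T_{\bf E}$-invariant Borel measure; this identification, combined with the standard duality between $\B$ and continuous test functions, produces the SRB measure $\mu_{\bf E}$ of part (1), while the $\B$-convergence $\Lp_{T_{\bf E},0}^n \mu_0 \to \mu_{\bf E}$ (from the spectral gap) yields the physical-measure property.

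For part (2), the key identity $\mu((J_{\mu_0}T_{\bf E}^n)^a) = \Lp_{T_{\bf E},a}^n \mu(1)$ reduces the existence of the logarithmic moment generating function to the non-vanishing of $\Pi_a \mu$ for $\mu \in \{\mu_0,\mu_{\bf E}\}$; once this is granted the spectral decomposition gives
\[
e_{\mu_0}(a) \,=\, e_{\mu_{\bf E}}(a) \,=\, \log \lambda_a \,=: \, e_{\bf E}(a),
\]
and analyticity of $e_{\bf E}$ in a complex neighborhood of $[-a_0, 1+a_0]$ follows from Kato perturbation theory applied to the operator-analytic family $a \mapsto \Lp_{T_{\bf E},a}$, the weight $(J_{\mu_0}T_{\bf E})^a$ being entire in $a$ thanks to the uniform lower bound $J_{\mu_0}T_{\bf E} = 1 + \ve H > 0$ from Lemma~\ref{TFJ}. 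The fluctuation relation \eqref{eq:fluct} is then immediate: taking $\tfrac{1}{n}\log$ of the identity $\mu_0(e^{-a S_n s}) = \mu_0(e^{-(1-a) S_n s})$ in Proposition~\ref{ess} and letting $n\to\infty$ gives $e_{\mu_0}(a) = e_{\mu_0}(1-a)$, which by the previous step is $e_{\bf E}(a) = e_{\bf E}(1-a)$.

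The main obstacle is verifying $\Pi_a \mu_0 \neq 0$ and $\Pi_a \mu_{\bf E} \neq 0$ uniformly across the whole interval, not just near $a=0$ where continuity of spectral projections would suffice. I would handle this by a positivity argument: $(J_{\mu_0}T_{\bf E}^n)^a$ is uniformly bounded away from $0$ for $a$ in a compact interval (by Lemma~\ref{TFJ}), so the positive measure $\Lp_{T_{\bf E},a}^n \mu_0$ has total mass bounded below by $c_a^n > 0$, while Theorem~\ref{thm:uniform} gives $\|\Lp_{T_{\bf E},a}^n\mu_0\|_\B \leq C\lambda_a^n$; matching these forces the dominant spectral component to carry non-trivial mass, i.e.\ $\ell_a(\mu_0) \neq 0$. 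The same argument applies to $\mu_{\bf E}$, which as a positive element of $\B$ can be compared to iterates of $\mu_0$ via the mixing of $\Lp_{T_{\bf E},0}$.

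For part (3), analyticity of $e_{\bf E}$ together with analyticity of the eigendata $(\lambda_a, h_a, \ell_a)$ legitimates standard derivative computations yielding
\[
e'_{\bf E}(0) \,=\, \mu_{\bf E}(\log J_{\mu_0}T_{\bf E}), \qquad e''_{\bf E}(0) \,=\, \sigma_{\bf E}^2,
\]
with $\sigma_{\bf E}^2$ the Green--Kubo asymptotic variance of $\log J_{\mu_0}T_{\bf E}$ under $\mu_{\bf E}$. Strict convexity of the analytic function $e_{\bf E}$ is equivalent to $\sigma_{\bf E}^2 > 0$, which by the standard hyperbolic dichotomy is equivalent to $\log J_{\mu_0}T_{\bf E}$ not being an $L^2(\mu_{\bf E})$-coboundary. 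Given strict convexity, the endpoint values $e_{\bf E}(0) = 0 = e_{\bf E}(1)$ (the latter via the fluctuation symmetry just proved) force $e'_{\bf E}(0) < 0$. The $\ve$-expansions are then routine: Lemma~\ref{TFJ} gives $\log J_{\mu_0}T_{\bf E} = \ve H + O(\ve^2)$, and the continuous dependence of the Perron eigenvector gives $\mu_{\bf E} \to \mu_0$ in $\B_w$ as $\ve \to 0$, yielding $\mu_{\bf E}(\log J_{\mu_0}T_{\bf E}) = \ve \mu_0(H) + o(\ve)$; the analogous expansion of the Green--Kubo sum gives $\sigma_{\bf E}^2 = \ve^2 \sigma_H^2 + o(\ve^2)$.
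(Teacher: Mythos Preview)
Your overall architecture matches the paper's: spectral decomposition of $\Lp_{T_{\bf E},a}$ gives $e_{\bf E}(a)=\log\lambda_a$, the transient identity of Proposition~\ref{ess} then transfers to the steady-state symmetry, and derivative computations yield the asymptotics in part (3). Two points, however, need repair.

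\textbf{Non-vanishing of $\Pi_a\mu_0$.} Your positivity argument does not close: the lower bound $\mu_0((J_{\mu_0}T_{\bf E}^n)^a)\ge c_a^n$ only tells you $\lambda_a\ge c_a$, and since the spectral radius of the remainder $Q_a$ may lie anywhere in $(\rho_{\mathrm{ess}},\lambda_a)$, you cannot conclude $c_a>\rho(Q_a)$. The paper's mechanism is different from the one you rule out as ``only near $a=0$'': because the potential is $a\log(1+\ve H)$, equation~\eqref{eq:close} shows $\|\Lp_{T_{\bf E},a}-\Lp_{T_{\bf E},0}\|_{\B}=O(\ve)$ \emph{uniformly in $a\in[-a_0,1+a_0]$}. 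Thus shrinking $\ve_1$ (not restricting $a$) forces all spectral projectors $\Pi_a$ close to $\Pi_0$, and since $c_{\bE,0}(\nu)=1$ for any probability $\nu\in\B$, one gets $c_{\bE,a}(\mu_0),\,c_{\bE,a}(\mu_{\bf E})>0$ across the whole interval (Remark~\ref{rem:positive} and the paragraph following it in the proof).

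\textbf{The coboundary dichotomy and strict convexity.} You invoke a ``standard hyperbolic dichotomy'' for $\sigma_{\bf E}^2=0\Leftrightarrow$ coboundary, but in this setting it is not standard and is in fact the hardest part of the proof. Two separate issues: first, $\sigma_{\bf E}^2>0$ gives $e_{\bf E}''(0)>0$, not strict convexity on the full interval; the paper handles this by introducing the equilibrium states $\mu_{\bE,a}=h_{\bE,a}\otimes\nu_{\bE,a}$, showing $e_{\bf E}''(a)=e_a''(0)$ where $e_a$ is the moment generating function under $\mu_{\bE,a}$, and then using continuity in $\ve$ (Lemma~\ref{lem:cont ve}) to push positivity across all $a$. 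Second, the implication $\sigma_{\bf E}^2=0\Rightarrow$ coboundary is obtained via Gordin's martingale CLT (Theorem~\ref{thm:CLT}), whose $L^2$ summability hypothesis $\sum_n\|E(\bar s\mid\cA_n)\|_{L^2}<\infty$ requires exponential decay of correlations of $\bar s$ against observables in $L^2(\cA_0,\mu_{\bf E})$---a strictly larger class than $\B'$ accommodates directly. The paper proves two auxiliary lemmas to get there: Lemma~\ref{lem:dual} extends the weak-norm pairing estimate to $\cA_0$-measurable bounded functions, and Lemma~\ref{lem:decay} bootstraps $L^\infty$ decay to $L^2$ decay. A further wrinkle is that $\bar s$ itself (H\"older only on components of $M\setminus\cS_1^T$) does not lie in $\B$, so one must pass to $\bar s\circ T_{\bf E}^{-1}$ and invoke Lemma~\ref{lem:multiplier}. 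None of this is routine, and your sketch does not supply it.
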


\begin{remark}
The expansion of $\mu_{\bE}(\log J_{\mu_0}T_{\bE})$ in item (3) of Theorem~\ref{thm:moment}
is related to the linear response of the periodic Lorentz gas to the external forces
$\bE = (\bF, \bG)$.  For more explicit relations valid for this class of perturbations,
see \cite{CELSb, CZZ}.
\end{remark}

We prove Theorem~\ref{thm:moment} in Section~\ref{moment proof}.  The main technical elements in 
the proof are first to establish the spectral gap, and then to derive the existence of the relevant
limit(s) and the analyticity of the moment generating function.  The proof of strict convexity
also requires substantial work related to
the Central Limit Theorem.  Once these two properties are established, the fluctuation relation 
\eqref{eq:fluct} follows 
immediately from the transient fluctuation relation, Proposition \ref{ess}.

By using standard large deviation techniques \cite{DZe} we obtain immediately a version of the 
Gallavotti-Cohen fluctuation theorem.

\begin{theorem}
\label{cor:limit theorems}  Under the assumptions of Theorem ~\ref{thm:uniform},
for all $z \in [e_{\bE}'(-a_0), e_{\bE}'(1+a_0)]$, we have
%there exists $z_0>0$  such that for all $|z| \le z_0$ we have 
\[
\lim_{\delta \to 0} \lim_{n \to \infty} \frac 1n \log \frac{  \mu_{\bf E} \Big( x  : \frac 1n S_ns (x) \in
[z-\delta, z +\delta] \Big)} {\mu_{\bf E} \Big( x  : \frac 1n S_ns (x) \in
[-z-\delta, -z +\delta] \Big)} = z .
\]
\end{theorem}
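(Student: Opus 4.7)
The plan is to read off Theorem~\ref{cor:limit theorems} as a direct consequence of Theorem~\ref{thm:moment} by applying the G\"artner--Ellis theorem \cite{DZe}, exactly in the manner sketched in equation \eqref{eq:sym I} of Section~\ref{sec:fluctuations}. All the heavy analytical lifting (spectral gap, existence, analyticity and strict convexity of $e_{\bE}$, and the symmetry $e_{\bE}(a)=e_{\bE}(1-a)$) has already been done by the time this corollary is stated, so the proof should be short and largely bookkeeping.

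First, I would record the consequences of Theorem~\ref{thm:moment} that feed into G\"artner--Ellis. Item~(2) gives that $e_{\bE}(a)$ exists and is analytic on a complex neighborhood of $[-a_0,1+a_0]$ (in particular $C^1$ and finite there). Item~(3) together with the fact that in the regime of interest $\sigma_{\bE}^2>0$ (so $\log J_{\mu_0}T_{\bE}$ is not a coboundary) gives strict convexity of $e_{\bE}$ on $[-a_0,1+a_0]$. These are precisely the hypotheses of the G\"artner--Ellis theorem, so we obtain the local large deviation principle
\[
\lim_{\delta\to 0}\lim_{n\to\infty}\frac{1}{n}\log \mu_{\bE}\!\left(x:\tfrac{1}{n}S_n s(x)\in[z-\delta,z+\delta]\right)=-I(z)
\]
for every $z$ in the image of $[-a_0,1+a_0]$ under the derivative of the relevant log-moment generating function, with rate function given by the Legendre transform of $e_{\bE}$ (with the sign convention fixed in Section~\ref{sec:fluctuations}). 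Strict convexity ensures this derivative is a homeomorphism onto its image, so the admissible range of $z$ is exactly the closed interval $[e_{\bE}'(-a_0), e_{\bE}'(1+a_0)]$ stated in the theorem.

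Second, I would transport the fluctuation symmetry from $e_{\bE}$ to $I$. This is the computation performed in \eqref{eq:sym I}: the substitution $b=1-a$ in the Legendre transform, combined with $e_{\bE}(a)=e_{\bE}(1-a)$ from Theorem~\ref{thm:moment}(2), produces the relation $I(z)-I(-z)=-z$ (equivalently $I(-z)=I(z)+z$). A small check is required at this step: the shift $a\mapsto 1-a$ maps $[-a_0,1+a_0]$ to itself, so the Legendre transform is genuinely invariant under this reparametrization, and hence the interval $[e_{\bE}'(-a_0),e_{\bE}'(1+a_0)]$ is symmetric about $0$ (because $e_{\bE}'(\tfrac{1}{2})=0$ by the symmetry). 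This guarantees that whenever $z$ lies in the admissible range, so does $-z$.

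Finally, I would combine the two ingredients: taking the logarithm of the ratio in the statement, the LDP applied to numerator and denominator gives
\[
\lim_{\delta\to 0}\lim_{n\to\infty}\frac{1}{n}\log \frac{\mu_{\bE}(\tfrac{1}{n}S_ns\in[z-\delta,z+\delta])}{\mu_{\bE}(\tfrac{1}{n}S_ns\in[-z-\delta,-z+\delta])} = -I(z)+I(-z) = z,
\]
which is exactly the conclusion. I do not anticipate any genuine obstacle here; the whole content of the corollary is that the non-perturbative spectral and analyticity statements already proved in Theorems~\ref{thm:uniform} and \ref{thm:moment} imply a standard G\"artner--Ellis large deviation principle, and that the Gallavotti--Cohen identity is obtained by Legendre-transforming the symmetry $e_{\bE}(a)=e_{\bE}(1-a)$. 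The only points warranting care are the verification that $z$ and $-z$ both lie in the LDP regime and that the sign conventions in the definition of $e_{\bE}$ (which contains $e^{-a S_n s}$) are tracked consistently through the Legendre transform.
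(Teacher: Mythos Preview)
Your proposal is correct and follows essentially the same route as the paper. The paper's own proof is a single sentence noting that the symmetry of $e_{\bE}$ implies the rate-function symmetry $I(z)=I(-z)-z$ from \eqref{eq:sym I}, after which the result is immediate from the G\"artner--Ellis LDP recalled in Section~\ref{sec:fluctuations}; your write-up simply fills in more of the bookkeeping (in particular the observation that $a\mapsto 1-a$ preserves $[-a_0,1+a_0]$, so the admissible range of $z$ is symmetric about $0$).
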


The proof is immediate as soon as we recall that the symmetry of the logarithmic moment 
generating function implies the symmetry $I(z)=I(-z)-z$ from \eqref{eq:sym I} for the rate function.

%%%%%%%%%%%%%%%%%%%%%%%%%%%%%%%%%%%%%%%%%%%%

\section{Abstract Framework}
\label{common}

In this section, we present a set of uniform properties {\bf (H1)}-{\bf (H5)} enjoyed by the class
of perturbed billiard maps defined in Section~\ref{model}; these properties guarantee
the Lasota-Yorke inequalities \eqref{eq:uniform LY} with uniform constants.
These conditions are a simplified version of the abstract framework appearing in
\cite{DZ2} since here we consider only finite horizon billiards, so the
technical difficulties associated with the infinite horizon case are excluded.

We also introduce general conditions {\bf (C1)}-{\bf (C4)} to verify that
a perturbation is small
in the sense required for Theorem~\ref{thm:uniform}.  These
conditions are sufficient to establish the framework of \cite{keller
liverani}.  The fact that the specific classes of perturbations we consider in
Section~\ref{model} satisfy {\bf (H1)}-{\bf (H5)} follows from Lemma~\ref{lem:verify H}.

%%%%%%%%%%%%%%%%%%%%%%%%%%%%%%%%%%%%%%%%%%%%

\subsection{A class of maps with uniform properties}
\label{class of maps}

We fix the phase space $M = \cup_{i=1}^d I_i \times [ - \frac{\pi}{2},
\frac{\pi}{2} ]$
of a billiard map
associated with a periodic Lorentz gas as in Section~\ref{results}.
We will denote (normalized) Lebesgue measure on $M$ by $m$,
i.e., $dm = \frac{1}{\pi L} dr d\vf$, where $L = \sum_{i=1}^d |I_i|$.

We define the set $\Si_0 = \{ \vf = \pm \frac{\pi}{2} \}$ and for a fixed
$k_0 \in \mathbb{N}$,
we define for $k \geq k_0$, the homogeneity strips,
\beq\label{homogeneity}
\Ho_k = \{ (r,\vf) : \pi/2 - k^{-2} < \vf < \pi/2 - (k+1)^2 \}.
\eeq
The strips $\Ho_{-k}$ are defined similarly near $\vf = -\pi/2$. We also
define
$\Ho_0 = \{ (r, \vf) : -\pi/2 + k_0^{-2} < \vf < \pi/2 - k_0^{-2} \}$.
The set $\Si_{0,H} = \Si_0 \cup (\cup_{|k| \ge k_0} \partial \Ho_{\pm k}
)$ is therefore fixed
and will give rise to the singularity sets for the maps that we define
below,
i.e. for any map $T$ that we consider, we define
$\Si_{\pm n}^T = \cup_{i = 0}^n T^{\mp i} \Si_0$ to be the singularity
sets for
$T^{\pm n}$, $n \ge 0$.
We assume that $\cS_{\pm n}^T$ comprises finitely many smooth curves for each
$n \in \mathbb{N}$.  We also define the extended singularity sets 
$\Si_{\pm n}^{T, \Ho} = \cup_{i=0}^n T^{\mp i} \Si_{0,H}$ to include the boundaries of the
homogeneity strips.  When the map $T$ is fixed, we sometimes write
$\Si_{\pm n}^{\Ho}$ to simplify notation.

Suppose there exists a class of invertible maps $\mathcal{F}$ such that
for each $T \in \mathcal{F}$, $T : M \setminus \Si_1^T \to M \setminus
\Si_{-1}^T$
is a $C^2$ diffeomorphism on each connected component of $M \setminus
\Si_1^T$.
We assume that elements of $\mathcal{F}$ enjoy the following uniform
properties.

\bigskip
\noindent
{\bf(H1)}
{\bf Hyperbolicity and singularities.}   There exist continuous families
of stable and unstable cones $C^s(x)$ and $C^u(x)$, defined on all of $M$,
which are strictly invariant for the
class $\F$, i.e., $DT(x) C^u(x) \subset C^u(Tx)$ and $DT^{-1}(x) C^s(x)
\subset C^s(T^{-1}x)$
for all $T \in \F$ wherever $DT$ and $DT^{-1}$ are defined.

The cones $C^s(x)$ and $C^u(x)$ are uniformly transverse on $M$
and $\Si_{-n}^T$ is uniformly transverse to $C^s(x)$ for each $n \in \N$
and
all $T \in \F$.   We assume in addition that
$C^s(x)$ is uniformly transverse to
the horizontal and vertical directions on all of $M$.\footnote{This is not
a restrictive
assumption for perturbations of the Lorentz
gas since the standard cones $\hat C^s$ and $\hat C^u$ for the billiard
map satisfy this
property (see for example \cite[Section 4.5]{chernov book}); the common
cones
$C^s(x)$ and $C^u(x)$ shared by all maps in the class $\F$ must therefore
lie
inside $\hat C^s(x)$ and $\hat C^u(x)$ and therefore satisfy this
property.}

Moreover, there exist constants $C_e>0$ and $\Lambda >1$ such that for all
$T \in \F$,
\begin{equation}
\label{eq:uniform hyp}
\| DT^n(x) v \| \ge C_e^{-1} \Lambda^n \| v\|, \forall v \in C^u(x), \; \;
\;
\mbox{and} \; \; \;
\| DT^{-n}(x) v \| \ge C_e^{-1} \Lambda^n \| v\|, \forall v \in C^s(x),
\end{equation}
for all $n \ge 0$, where $\| \cdot \|$ is the Euclidean norm on the
tangent space $\mathcal{T}_xM$.

We also assume a similar unbounded expansion in a neighborhood of $\cS_0$.
We assume there exists $C_c >0$ such that
\begin{equation}
\label{eq:expansion}
C_c [\cos \vf(T^{-1}x)]^{-1} \| v \| \leq \|DT^{-1}(x) v\| \leq C_c^{-1}
 [\cos \vf(T^{-1}x)]^{-1} \| v \| ,\,\,\,\,\,\,\forall x\in M \setminus \cS_{-1}^T, \forall
v\in C^s(x),
\end{equation}
where $\vf(y)$ denotes the angle at the point $y = (r, \vf) \in M$.
Let exp$_x$ denote the exponential map from $\mathcal{T}_xM$ to $M$.
We require the following bound on the second derivative,
\begin{equation}
\label{eq:2 deriv}
C_c [\cos \vf(T^{-1}x)]^{-3} \leq \|D^2T^{-1}(x) v\| \leq C_c^{-1} 
[\cos \vf(T^{-1}x)]^{-3},\,\,\,\,\,\,\forall x\in M \setminus \cS_{-1}^T,
\end{equation}
for all $v \in \mathcal{T}_xM$ such that $T^{-1}(\mbox{exp}_x(v))$ and
$T^{-1}x$ lie in the
same homogeneity strip.\\

\bigskip
\noindent
{\bf(H2)}
{\bf Families of stable and unstable curves.} We call $W$ a {\em stable
curve}
for a map $T \in \F$ if the tangent line to $W$, $\mathcal{T}_xW$ lies in
$C^s(x)$
for all $x \in W$. We call $W$ {\em homogeneous} if $W$ is contained in
one homogeneity
strip $\Ho_k$.   Unstable curves are defined similarly.

\smallskip
\noindent
Let $\widehat\W^s$ denote the set of $\C^2$ homogeneous stable curves in
$M$ whose
curvature is bounded above by a uniform constant $B >0$. We assume there
exists a
choice of $B$ such that $\widehat\W^s$ is invariant under
$\F$ in the following sense:  For any $W \in \widehat\W^s$ and $T \in \F$,
the connected components of $T^{-1}W$ are again elements of $\widehat
\W^s$.
A family of unstable curves $\widehat\W^u$ is defined analogously, with
obvious modifications:
For example, we require the connected components of $TW$ to be elements of
$\widehat\W^u$ for all $W \in \widehat\W^u$ and $T \in \F$.

\vspace{0.15in}

\noindent
{\bf(H3)}
{\bf Complexity bounds (One-step expansion).}\footnote{In \cite{DZ2}, a `weakened one-step
expansion' was also assumed:  $\limsup_{\delta \to 0} \sup_{T \in \cF} \sup_{|W| < \delta}
\sum_i |J_{V_i}T|^\varsigma < \infty$ for some $\varsigma <1$, where the norm
of the Jacobian is measured in the Euclidean norm.  Since here we restrict to finite
horizon, however, this property follows from {\bf (H1)}.}
We assume that there exists an adapted norm $\| \cdot \|_*$,
uniformly equivalent to $\| \cdot \|$, in
which the constant $C_e$ in \eqref{eq:uniform hyp} can be taken to be $1$,
i.e. we have expansion
and contraction in one step in the adapted norm for all maps in the class
$\F$ (for example, the norm from \cite[Sect. 5.10]{chernov book}).

Let $W \in \widehat W^s$.
For any $T \in \F$, we partition the connected components of $T^{-1}W$
into
maximal pieces $V_i = V_i(T)$ such that each $V_i$ is a homogeneous stable
curve in some
$\Ho_k$, $k\geq k_0$, or $\Ho_0$.
Let $|J_{V_i}T|_*$
denote the minimum contraction on $V_i$ under $T$ in the metric
induced by the adapted norm $\| \cdot \|_*$.
We assume that for some choice of $k_0$,
\begin{equation}
\label{eq:step1}
\limsup_{\delta \to 0} \sup_{T \in \F} \sup_{|W|<\delta} \sum_i
|J_{V_i}T|_* < 1,
\end{equation}
where $|W|$ denotes the arclength of $W$.

%\vspace{0.07in}

%{\bf (H3.B)} {\bf One-step expansion with weakened exponent.}
%There exists $\varsigma_0 < 1$ such that for
%all $\varsigma > \varsigma_0$, there exists $C_\varsigma = C_\varsigma
%(\varsigma, \delta)$
%such that for all $T \in \F$ and any $W \in \widehat\W^s$ with $|W| <
%\delta$,
%\begin{equation}
%\label{eq:weakened step1}
%\sum_i |J_{V_i}T|^\varsigma_{\C^0(V_i)} < C_\varsigma ,
%\end{equation}
%where $J_{V_i}T$ denotes the stable Jacobian of $T$ along the curve $V_i$
%with respect
%to arc length.
%
%\begin{remark}
% Note that compared to {\bf (H3.A)}, the above sum
%converges even when the expansion on each piece
%is weakened slightly.  We formulate \eqref{eq:weakened step1} in terms of
%the usual Euclidean norm since we do not need $C_\varsigma <1$, i.e. we
%only need the
%above sum to be finite in some uniform sense.
%\end{remark}

\bigskip
\noindent
{\bf(H4)}
{\bf Bounded distortion.}
There exists a constant $C_d>0$ with the following properties.
Let $W' \in \widehat\W^s$ and for any $T \in \F$, $n \in \N$, let $x, y
\in W$ for some
connected component $W \subset T^{-n}W'$ such that $T^iW$ is a
homogeneous stable curve for each $0 \le i \le n$. Then,
\begin{equation}
\label{eq:distortion stable}
\left| \frac{J_{\mu_0} T^n(x)}{J_{\mu_0} T^n(y)} -1 \right| \; \leq \; C_d
d_W(x,y)^{1/3} \; \;
\mbox{and} \; \; \left| \frac{J_WT^n(x)}{J_WT^n(y)} -1 \right| \; \leq \;
C_d d_W(x,y)^{1/3},
\end{equation}
where as before $J_{\mu_0} T^n$ is the Jacobian of $T^n$ with respect to the smooth
measure
$d\mu_0 = c \cos \vf dr d\vf$.

\smallskip
\noindent
We assume the analogous bound along unstable leaves:
If $W \in \widehat\W^u$ is an unstable curve such that $T^iW$ is a
homogeneous unstable curve
for $0\le i \le n$, then for any $x, y \in W$,
\begin{equation}
\label{eq:D u dist}
\left| \frac{J_{\mu_0} T^n(x)}{J_{\mu_0} T^n(y)} -1 \right| \; \leq \; C_d
d(T^nx,T^ny)^{1/3} .
\end{equation}

\bigskip
\noindent
{\bf(H5)}
{\bf Control of Jacobian.}
Let $\beta, \gamma < 1$ be from the definition of the norms in
Section~\ref{norms} and let
$\theta_*<1$ be from \eqref{eq:one step contract}.
Assume there exists a constant 
$0<\eta < \min \{ \Lambda^\beta, \Lambda^\gamma, \theta_*^{p-1} \}$
such that for any $T \in \F$, 
\[
(J_{\mu_0}T(x))^{-1} \le \eta \qquad \mbox{wherever $J_{\mu_0}T$ is defined.}
\]

Recall the family of stable curves $\widehat \W^s$ defined by {\bf (H2)}.
We define
a subset $\W^s \subset \widehat \W^s$ as follows. By {\bf (H3)} we may
choose
$\delta_0 > 0$ for which there exists $\theta_*<1$ such that
\begin{equation}
\label{eq:one step contract}
\sup_{T \in \F} \sup_{|W| \le \delta_0} \sum_i |J_{V_i}T|_* \le \theta_* .\end{equation}
We shrink $\delta_0$ further if necessary so that the graph transform
argument needed in the proof of
Lemma~\ref{lem:angles}(a) holds.
The set $\W^s$ comprises all those stable curves $W \in \widehat\W^s$ such
that
$|W| \le \delta_0$.

%%%%%%%%%%%%%%%%%%%%%%%%%%%%%%%%%%%%%%%%

%%%%%%%%%%%%%%%%%%%%%%%%%%%%%%%%%%%%%%%

\subsection{Distance in $\F$}
\label{distance}

We define a distance in $\mathcal{F}$ as follows. 
For $T_1, T_2 \in \F$ and $\ve > 0$, let
$N_\ve(\Si^i_{-1})$ denote the $\ve$-neighborhood in $M$ of the
singularity set $\Si^i_{-1}$ of
$T_i^{-1}$, $i = 1,2$.   We say $d_{\F} (T_1, T_2) \le \ve$
if
the maps are close away from their singularity sets in the following
sense:
For $x \notin N_\ve(\Si^1_{-1} \cup \Si^2_{-1})$,

\medskip
\noindent
\parbox{.07 \textwidth}{\bf(C1)}
\parbox[t]{.91 \textwidth}{
$ \displaystyle
d(T_1^{-1}(x) , T_2^{-1}(x))  \le \ve$;
}

\medskip
\noindent
\parbox{.07 \textwidth}{\bf(C2)}
\parbox[t]{.91 \textwidth}{
$ \displaystyle
\left| \frac{J_\mu T_i(x)}{ J_\mu T_j(x)} - 1 \right| \le \ve$, $i,j =
1,2$;
}

\medskip
\noindent
\parbox{.07 \textwidth}{\bf(C3)}
\parbox[t]{.91 \textwidth}{
$ \displaystyle
\left| \frac{J_WT_i(x)}{ J_WT_j(x)}  - 1 \right| \le \ve$,
for any $W \in \W^s$, $i,j = 1,2$, and $x \in W$;
}

\medskip
\noindent
\parbox{.07 \textwidth}{\bf(C4)}
\parbox[t]{.91 \textwidth}{
$ \displaystyle
\| DT_1^{-1}(x) v - DT_2^{-1}(x) v \| \le \sqrt{\ve}$, for any unit vector
$v \in \mathcal{T}_xW$,
$W \in \W^s$.
}

\medskip
We remark that while this notion of distance requires $T_1$ and $T_2$ to be $\cC^1$-close outside
an $\ve$-neighborhood of $\cS_{-1}^1 \cup \cS_{-1}^2$, it does not require $\cS_{-1}^1$ and
$\cS_{-1}^2$ to be close as subsets of $M$.

%%%%%%%%%%%%%%%%%%%%%%%%%%%%%%%%%%%%%%%%

\section{Definition of the Norms}
\label{norms}

The norms we will use are defined via integration on the set of stable curves
$\W^s$.  Before defining the norms, we define
the notion of a distance $d_{\W^s}(\cdot, \cdot)$ between such curves as
well as
a distance $d_q(\cdot, \cdot)$ defined among functions supported on these
curves.

Due to the transversality condition on the stable cones
$C^s(x)$ given by {\bf (H1)}, each stable curve $W$ can be viewed
as the graph of a function $\vf_W(r)$ of the arc length parameter $r$.
For each $W \in \W^s$,
let $I_W$ denote the interval on which
$\vf_W$ is defined and set $G_W(r) = (r, \vf_W(r))$ to be its graph so
that
$W = \{ G_W(r) : r \in I_W \}$.
We let $m_W$ denote the unnormalized arclength measure on $W$,
defined using the Euclidean metric.

Let $W_1, W_2 \in \W^s$ and identify them with the graphs $G_{W_i}$ of
their
functions $\vf_{W_i}$, $i = 1,2$. Suppose $W_1, W_2$ lie in the same
component of $M$
and let $I_{W_i}$ be the $r$-interval on which each curve is defined.
Denote by $\ell(I_{W_1} \triangle I_{W_2})$ the length of the symmetric
difference
between $I_{W_1}$ and $I_{W_2}$.
Let
$\Ho_{k_i}$ be the homogeneity strip containing $W_i$.
We define the distance between $W_1$ and $W_2$ to be,
\[
d_{\W^s} (W_1,W_2) = \eta(k_1, k_2) +
\ell( I_{W_1} \triangle I_{W_2}) + |\vf_{W_1} -\vf_{W_2}|_{\C^1(I_{W_1}
\cap I_{W_2})}
\]
where $\eta(k_1,k_2) = 0$ if $k_1=k_2$ and $\eta(k_1,k_2) = \infty$
otherwise,
i.e., we only compare curves which lie in the same homogeneity strip.

For $0 \leq \alpha \leq 1$, denote by
$\tilde{\C}^\alpha(W)$ the set of continuous complex-valued functions on
$W$ with H\"{o}lder exponent $\alpha$, measured in the Euclidean
metric, which we denote by $d_W(\cdot, \cdot)$.
We then denote by $\C^\alpha(W)$ the closure of $\C^\infty(W)$
in the $\tilde{\C}^\alpha$-norm\footnote{While $\C^\alpha(W)$ may not contain
all of $\tilde{\C}^\alpha(W)$, it does contain $\C^{\alpha'}\!(W)$ for all $\alpha'>\alpha$.
Defining $\C^\alpha(W)$ in this manner ensures the injectivity of the inclusion
$\cB \hookrightarrow \cB_w$.}:
$| \psi |_{\C^\alpha(W)} = |\psi|_{\C^0(W)} + H^\alpha_W(\psi)$, where
$H^\alpha_W(\psi)$ is the H\"older constant of $\psi$ along $W$.
Notice that with this definition,
$|\psi_1 \psi_2 |_{\C^\alpha(W)} \le |\psi_1|_{\C^\alpha(W)} |\psi_2|_{\C^\alpha(W)}$.
We define $\tilde{\C}^\alpha(M)$ and $\C^\alpha(M)$ similarly.

Given two functions
$\psi_i\in\C^\beta(W_i,\mathbb{C})$, $\beta >0$, we define the distance between
$\psi_1$, $\psi_2$ as
\[
d_\beta(\po,\pt) =|\po\circ G_{W_1}-\pt\circ G_{W_2}|_{\C^\beta(I_{W_1} \cap
I_{W_2})}.
\]
We will define the required Banach spaces by closing $\C^1(M)$ with
respect to
the following set of norms.

Fix $0 < \alpha \le \min \{ \frac 13, \frac{\alpha_1}{2} \} $, where $\alpha_1$ is from
Lemma~\ref{TFJ}.
Given a function $h \in \C^1(M)$, define the \emph{weak norm}
of $h$ by
\begin{equation}
\label{eq:weak}
|h|_w:=\sup_{W\in\W^s}\sup_{\substack{\psi \in\C^\alpha(W)\\
|\psi|_{\C^\alpha(W)} \leq 1}}\int_W h \psi \; dm_W .
\end{equation}
Choose
$\beta$, $\gamma$, $p >0$ such that $\beta < \alpha$, $p\le 1/3$ and
$\gamma < \min \{ p, \alpha-\beta, 1/7 \}$.
We define the \emph{strong stable norm} of $h$ as
\begin{equation}
\label{eq:s-stable}
\|h\|_s:=\sup_{W\in\W^s}\sup_{\substack{\psi\in\C^\beta(W)\\
|W|^p |\psi|_{\C^\beta(W)} \leq 1}}\int_W h \psi \; dm_W
\end{equation}
and the \emph{strong unstable norm} as
\begin{equation}
\label{eq:s-unstable}
\|h\|_u:=\sup_{\varepsilon \leq \varepsilon_0} \; \sup_{\substack{W_1,
W_2 \in \W^s \\
d_{\W^s} (W_1,W_2)\leq \varepsilon}}\;
\sup_{\substack{\psi_i \in \C^\alpha(W_i) \\ |\psi_i|_{\C^\alpha(W_i)}\leq 1\\
d_\beta(\psi_1,\psi_2)
\leq \ve}} \;
\frac{1}{\varepsilon^\gamma} \left| \int_{W_1} h
\psi_1 \; dm_W - \int_{W_2} h \psi_2 \; dm_W \right|
\end{equation}
where $\ve_0 > 0$ is chosen less than $\delta_0$, the maximum length of $W
\in \W^s$ which
is determined by \eqref{eq:one step contract}.
We then define the \emph{strong norm} of $h$ by
\[
\|h\|_\B = \|h\|_s + b \|h\|_u
\]
where $b$ is a small constant chosen in \eqref{eq:LY}.

We define $\B$ to be the completion of $\C^1(M)$ in the strong
norm and $\B_w$ to be the completion of $\C^1(M)$ in the weak norm.
We remark that as a measure,
$h \in \C^1(M)$ is identified with $hd\mu_0$ according to our earlier
convention.
As a consequence, Lebesgue measure $dm = (\cos \vf)^{-1} d\mu_0$ is not
automatically
included in $\B$ since $(\cos \vf)^{-1} \notin \C^1(M)$.  It follows from
\cite[Lemma~5.5]{DZ2} that in fact, $m \in \B$ (and $\B_w$).

%%%%%%%%%%%%%%%%%%%%%%%%%%%%%%%%%%%%%%%

%%%%%%%%%%%%%%%%%%%%%%%%%%%%%%%%%%%%%%%%%%%%

\subsection{Properties of the Banach spaces}
\label{recall property}

We recall some properties of our Banach spaces which demonstrate that although they
are spaces of distributions defined as closures of $\C^1$ functions in the stated norms, they
enjoy some natural relations with more familiar spaces of functions and distributions.
Recall $H^\alpha_n(\psi) := \sup_{W \in T^{-n}\cW^s} H^\alpha_W(\psi)$
from Section~\ref{sec:transfer}.

\begin{lemma}
\label{lem:distr} 
The following properties hold.
\begin{itemize}
\item[(i)] (\cite[Lemma 5.4]{DZ2}) There exists $C >0$ such that for any
$h \in \B_w$, $T \in \F$, $n \ge 0$ and
$\psi \in \C^\alpha(T^{-n}\W^s)$,
  \[
  |h(\psi)| \le C |h|_w (|\psi|_\infty + H^\alpha_n(\psi)) .
  \]
  \item[(ii)] (\cite[Lemma 2.1]{DZ3})  There is a sequence of continuous inclusions
$\C^q(M) \hookrightarrow \B \hookrightarrow \B_w \hookrightarrow
(\C^\alpha(M))'$,
for all $q > \gamma/(1-\gamma)$.  The inclusions are injective, except possibly the last.\footnote{
This last inclusion can be made injective by introducting a weight $p'$ in the weak norm similar to
the role of $p$ in the strong stable norm, and requiring that $p' > \alpha$.  This is carried out
in \cite[Lemma 3.8]{DZ3}.}

\item[(iii)]  (\cite[Lemma 3.10]{demers zhang})
The unit ball of $(\B, \| \cdot \|_\B)$ is compactly embedded in $(\B_w, |
\cdot |_w)$.
\end{itemize}
\end{lemma}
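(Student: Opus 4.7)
The three items are quoted from prior work of (subsets of) the authors, so my proposal follows the standard anisotropic-Banach-space strategy: establish each bound first on the dense subspace $\C^1(M)$ and then extend by continuity to the completions $\B$ and $\B_w$.

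For item (i), view $h \in \C^1(M)$ as the signed measure $h\, d\mu_0$ and disintegrate along a finite atlas of smooth foliations of $M$ by short stable curves $\{W_\xi\} \subset \cW^s$; such foliations are available because the cones $C^s(x)$ from (\textbf{H1}) are uniformly transverse to the coordinate directions. This writes $h(\psi)$ as an iterated integral $\int_\Xi \bigl(\int_{W_\xi} h\psi\, dm_{W_\xi}\bigr)\,\rho(\xi)\,d\xi$. On each leaf $W_\xi \in \cW^s$, subdivide further along the components of $\cS_{n}^{T,\Ho}$ so that each piece lies in $T^{-n}\cW^s$; on such a piece $\psi$ is H\"older with constant at most $H^\alpha_n(\psi)$, so that $|\psi|_{\C^\alpha(\cdot)} \le |\psi|_\infty + H^\alpha_n(\psi)$. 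Applying the defining inequality of $|\cdot|_w$ on each piece and summing gives (i) on $\C^1(M)$; density extends it to $\B_w$.

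For item (ii), I would verify the three inclusions by direct comparison of the test-function sets. For $\C^q(M) \hookrightarrow \B$, bound $\int_W h\psi\, dm_W$ by $|h|_{\C^q}\,|W|^{1-p}$ in the strong-stable norm (absorbing the $|W|^p$ in the test-function bound and using $|W| \le \delta_0$), and bound the strong-unstable difference by splitting into a symmetric-difference contribution of arclength $\le \varepsilon$ and a graph-parameter comparison between $\int_{W_1 \cap W_2} h\psi_1$ and $\int_{W_1 \cap W_2} h\psi_2$; the latter yields $\varepsilon^\gamma$ precisely when $q > \gamma/(1-\gamma)$. The middle inclusion $\B \hookrightarrow \B_w$ is immediate since $\beta < \alpha$ and $|W| \le \delta_0 \le 1$ make the weak-norm test set contain the strong-stable one after rescaling, so $|h|_w \le \|h\|_s \le \|h\|_\B$. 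Finally, $\B_w \hookrightarrow (\C^\alpha(M))'$ is just item (i) specialized to $n=0$.

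Item (iii) is the main obstacle. Given a sequence $\{h_k\} \subset \B$ with $\|h_k\|_\B \le 1$, the plan is: fix a small $\delta > 0$, choose a finite $\delta$-net $\{W_j\}_{j=1}^{N(\delta)}$ in $(\cW^s, d_{\cW^s})$, and on each $W_j$ choose a finite $\delta$-net $\{\psi_{j,l}\}$ in the $\C^\alpha$-unit ball, which is compact in the $\C^\beta$-topology by Arzel\`a--Ascoli since $\beta < \alpha$. The finitely many functionals $\Phi_{j,l}(h_k) := \int_{W_j} h_k \psi_{j,l}\, dm_{W_j}$ are uniformly bounded by $|h_k|_w \le C\|h_k\|_\B$, so Bolzano--Weierstrass with a diagonal extraction yields a subsequence on which every $\Phi_{j,l}$ converges. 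The strong-unstable norm then controls $|\int_W h_{k}\psi\, dm_W - \int_{W_j} h_{k}\psi_{j,l}\, dm_{W_j}|$ by $\|h_{k}\|_\B\,\delta^\gamma$ whenever $d_{\cW^s}(W,W_j) \le \delta$ and $d_\beta(\psi,\psi_{j,l}) \le \delta$, and this error vanishes uniformly in $k$ as $\delta \to 0$. A further Cantor diagonalization in $\delta$ produces a subsequence Cauchy in $|\cdot|_w$. The key technical point, and the reason for including $b\|\cdot\|_u$ in the strong norm, is exactly this uniform-in-$k$ coarse-graining error coming from the unstable norm.
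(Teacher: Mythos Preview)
The paper does not prove this lemma; it simply records citations to \cite{DZ2}, \cite{DZ3}, and \cite{demers zhang}. Your sketches for items (ii) and (iii) follow the standard anisotropic-space arguments and are essentially the strategies carried out in those references, modulo routine technicalities (for (iii), the countably many homogeneity strips $\Ho_k$ prevent $(\cW^s,d_{\cW^s})$ from being totally bounded; one truncates to $|k|\le K$ and controls the tail uniformly via the $|W|^p$ weight in $\|\cdot\|_s$, since curves in $\Ho_k$ have length $\lesssim k^{-2}$).

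Your argument for item (i), however, has a genuine gap. After subdividing a leaf $W_\xi$ of a smooth foliation along $\cS_n^{T,\Ho}$, the number of pieces is in general countably infinite: whenever $W_\xi$ crosses $\cS_n^T$ (which happens for a positive-measure set of $\xi$), some iterate $T^i$ carries a subarc of $W_\xi$ up to $\{\vf=\pm\pi/2\}$, and that subarc therefore meets infinitely many preimages $T^{-i}(\partial\Ho_k)$. The weak-norm bound on each piece contributes a term of order $|h|_w(|\psi|_\infty+H^\alpha_n(\psi))$ with no factor of the piece length, so ``summing gives (i)'' diverges.

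The fix --- and this is what the cited references do --- is to disintegrate $\mu_0$ not on a smooth foliation but on the partition $\cA_0$ into maximal homogeneous local stable manifolds of $T$. Each such leaf $W_\xi$ satisfies $T^iW_\xi\in\cW^s$ for all $i\ge 0$, hence $W_\xi\in T^{-n}\cW^s$ for every $n$, and no subdivision is needed; $\psi$ is already $\C^\alpha$ on the whole leaf. The cost is that the conditional densities now satisfy $|\eta_\xi|_{\C^\alpha}\lesssim|W_\xi|^{-1}$, and one must use finiteness of the Z-function $\int_{\Xi}|W_\xi|^{-1}\,d\hat\mu_0(\xi)$ for this proper family. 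This is precisely the argument the present paper carries out in the proof of the closely related Lemma~\ref{lem:dual}.
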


We shall need the following result, which is \cite[Lemma 3.5]{DZ3}.
Let $N_\ve(\cdot)$ denote the $\ve$-neighborhood of a set in $M$.

\begin{lemma}
\label{lem:multiplier}
Let $\cP$ be a (mod 0) countable partition of $M$ into open, simply
connected
sets such that:\\
(1) There are constants $K, C_1>0$ such that for each $P \in \cP$ and $W \in \cW^s$, 
$P \cap W$ consists of at most $K$ connected components and for any $\ve >0$,
$m_W(N_\ve(\partial P) \cap W) \le C_1 \ve$;
 (2) Each homogeneity strip $\bH_k$ intersects
at most finitely many $P \in \cP$.

Let $q > \gamma / (1-\gamma)$.
Suppose $f$ is a function on $M$ such that
$\sup_{P \in \cP} |f|_{C^q(P)} < \infty$ and let $h \in \B$.
Then $hf \in \B$ and $$\| hf \|_\B \le C \|h\|_\B \sup_{P \in \cP}
|f|_{C^q(P)}$$
for some uniform constant $C$.
\end{lemma}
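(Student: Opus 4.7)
The strategy is to estimate $\|hf\|_s$ and $\|hf\|_u$ separately by decomposing each stable curve $W \in \cW^s$ according to its intersections with cells of $\cP$. I first establish the bound for $h \in C^1(M)$, where $hf$ is a bounded piecewise-continuous product, and then extend to all $h \in \B$ by approximating $h$ in the strong norm by $C^1$ functions; the norm estimate itself guarantees that the products $h_n f$ form a Cauchy sequence in $\B$ whose limit defines $hf$.

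For the strong stable norm, fix $W \in \cW^s$ and $\psi \in C^\beta(W)$ with $|W|^p|\psi|_{C^\beta(W)} \le 1$. Conditions (1) and (2) imply that $W \setminus \partial \cP$ has a uniformly bounded number $N$ of components $W_j$, each contained in a single cell $P_j$ on which $f$ admits a $C^q$ extension. Writing
\[
\int_W h f \psi \, dm_W = \sum_j \int_{W_j} h (f \psi) \, dm_{W_j},
\]
each piece is estimated via the strong stable norm of $h$ applied to the test function $f\psi|_{W_j}$ (in the case $q \ge \beta$; when $q < \beta$ one instead uses the weak norm with $|h|_w \le C \|h\|_\B$ and a $C^{\min(q,\alpha)}$ test function). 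A Hölder summation $\sum_j |W_j|^p \le N^{1-p} |W|^p$ then yields $\|hf\|_s \le C \|h\|_\B \sup_P |f|_{C^q(P)}$.

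The strong unstable norm is the main obstacle. Given $W_1, W_2 \in \cW^s$ with $d_{\cW^s}(W_1, W_2) \le \ve$ and admissible test functions $\psi_1, \psi_2$, I pair components of $W_i \setminus \partial \cP$ that lie in the same cell of $\cP$ over a common parameter interval (\emph{matched} pieces) and those without counterparts on the other curve (\emph{unmatched} pieces). By condition (1), unmatched pieces lie in a set of $m_W$-measure $O(\ve)$ and their contribution is bounded using $\|h\|_s$ on short curves, giving a term of order $\ve^p$; after division by $\ve^\gamma$ this is controlled since $p > \gamma$. For matched pairs I apply the strong unstable norm of $h$ to the rescaled pair $(f\psi_1, f\psi_2)$, estimating
\[
d_\beta(f\psi_1,f\psi_2) \le \sup_P |f|_{C^q(P)} \bigl( d_\beta(\psi_1,\psi_2) + |f \circ G_{W_1} - f \circ G_{W_2}|_{C^\beta(I_{W_1} \cap I_{W_2})} \bigr) \le C \sup_P |f|_{C^q(P)} \ve^\gamma,
\]
where the second term is handled by interpolating between the $C^0$ bound of order $\ve^q$ and the trivial $C^q$ bound on $f \circ G_{W_1} - f \circ G_{W_2}$; this interpolation is precisely what forces the threshold $q > \gamma/(1-\gamma)$. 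Combining both norms produces the full estimate $\|hf\|_\B \le C \|h\|_\B \sup_P |f|_{C^q(P)}$, which extends from $C^1(M)$ to $\B$ by the density argument mentioned above.
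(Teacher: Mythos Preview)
The paper does not prove this lemma; it is quoted verbatim from \cite[Lemma~3.5]{DZ3}, so there is no in-paper argument to compare against. Your overall architecture---splitting $W$ along $\partial\cP$, and for $\|\cdot\|_u$ pairing matched sub-curves while controlling unmatched fragments via $\|h\|_s$---is the correct one and is indeed the skeleton of the proof in \cite{DZ3}.

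However, the unstable-norm step as you describe it does not close. You claim $d_\beta(f\psi_1,f\psi_2)\le C\,C_f\,\ve^\gamma$ and then apply $\|h\|_u$ to the normalized pair. But the definition of $\|h\|_u$ requires choosing a single scale $\ve'$ with both $d_{\cW^s}\le\ve'$ and $d_\beta\le\ve'$; with your bound this forces $\ve'\sim\ve^\gamma$, and the unstable norm then delivers only $(\ve')^\gamma\sim\ve^{\gamma^2}$, not $\ve^\gamma$. After dividing by $\ve^\gamma$ the estimate blows up. The standard remedy is \emph{not} to feed $(f\psi_1,f\psi_2)$ directly into $\|h\|_u$, but rather to transport: set $\widetilde\psi_2=(f\psi_1)\circ G_{W_1}\circ G_{W_2}^{-1}$ so that $d_\beta(f\psi_1,\widetilde\psi_2)=0$, apply $\|h\|_u$ at scale $\ve$ to the pair $(f\psi_1,\widetilde\psi_2)$, and then bound the remainder $\int_{U_2}h(\widetilde\psi_2-f\psi_2)$ using $\|h\|_s$. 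It is in this \emph{remainder} term that the interpolation between the $C^0$ bound $O(\ve^q)$ and the H\"older bound produces the threshold on $q$.

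A second gap concerns regularity of test functions when $q<\alpha$. The unstable norm demands $|f\psi_i|_{\C^\alpha(W_i)}<\infty$, which fails if $f$ is only $C^q$ with $q<\alpha$; your parenthetical fix (``use the weak norm'') goes the wrong way, since the weak norm requires \emph{smoother} ($C^\alpha$) test functions than the stable norm ($C^\beta$), not rougher ones. The proof in \cite{DZ3} handles this by an approximation/mollification of $f$ at a scale tuned to $\ve$, which is also what ultimately produces the precise exponent $\gamma/(1-\gamma)$ rather than, say, $\beta+\gamma$.
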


We call a potential {\em admissible} for a map $T \in \cF$ if $g$ is at least $1/3$ H\"older 
continuous\footnote{One can decrease the H\"older exponent 1/3 by placing another restriction on 
$\alpha$ and $\gamma$ in the definition of the norms.}  on connected
components of $M \setminus \cS^T_1$:  $\sup_{P \in \cP_1} |g|_{\C^{1/3}(P)} < \infty$, where
$\cP_1$ is the partition of $M$ into connected components of $M \setminus \cS_1^T$.

  Our final lemma of this section shows that $\Lp_{T,g}$ is well-defined as an operator from
$\B$ to $\B$.
Its proof is similar to \cite[Lemma 2.1]{demers zhang}, generalized to include potentials.
\begin{lemma}
\label{lem:L well defined}
If $g$ is an admissible potential for $T$, then $\Lp_{T,g}$ is well-defined as a continuous 
linear operator on both $\B$ and $\B_w$.
\end{lemma}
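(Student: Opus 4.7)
The plan is to factor the weighted transfer operator through the multiplication operator and the unweighted transfer operator, writing
\[
\Lp_{T,g}h(\psi) \,=\, h(e^g\cdot \psi\circ T) \,=\, (e^g h)(\psi\circ T) \,=\, \Lp_T(e^g h)(\psi),
\]
so that $\Lp_{T,g} = \Lp_T \circ M_{e^g}$ as distributions, where $M_{e^g}$ denotes pointwise multiplication by $e^g$. It therefore suffices to show (i) that $M_{e^g}$ is bounded on both $\B$ and $\B_w$, and (ii) that $\Lp_T$ is bounded on both $\B$ and $\B_w$. Item (ii) is precisely the $a=0$ case of the Lasota–Yorke estimates already recorded in Theorem~\ref{thm:uniform} and established in \cite{DZ2}, so the real content is item (i).

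For boundedness of $M_{e^g}$ on $\B$ I would appeal to Lemma~\ref{lem:multiplier} with $f=e^g$ and with the partition $\cP=\cP_1$ into the connected components of $M\setminus \cS_1^T$. Since $\cS_1^T$ consists of finitely many smooth curves, $\cP_1$ is finite, and since elements of $\cS_1^T$ are uniformly transverse to the stable cone field by \textbf{(H1)}, stable curves meet each $P\in\cP_1$ in a uniformly bounded number of components with the required neighborhood estimate; hence both combinatorial hypotheses of Lemma~\ref{lem:multiplier} are immediate. Admissibility of $g$ together with smoothness of $\exp$ gives $e^g\in\C^{1/3}(P)$ on each $P\in\cP_1$ with a uniform bound, and our parameter choices force $\gamma<1/7<1/4$, which yields $1/3>\gamma/(1-\gamma)$. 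Lemma~\ref{lem:multiplier} then delivers $\|e^g h\|_\B \le C\|h\|_\B$.

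Boundedness of $M_{e^g}$ on $\B_w$ is not directly covered by Lemma~\ref{lem:multiplier}, so I would estimate $|e^g h|_w$ by hand. Given $W\in\W^s$ and $\psi\in\C^\alpha(W)$ with $|\psi|_{\C^\alpha(W)}\le 1$, decompose $W$ into the uniformly boundedly many arcs $W_j=W\cap P_j$. Each $W_j$ inherits the stable cone and the length bound $|W_j|\le\delta_0$, hence $W_j\in\W^s$. The restriction $e^g|_{P_j}$ extends $\C^{1/3}$ to $\overline{P_j}\supset W_j$, and since $\alpha\le 1/3$ by the choice of norm parameters, the product $(e^g\psi)|_{W_j}$ lies in $\C^\alpha(W_j)$ with norm controlled by $\sup_{P\in\cP_1}|e^g|_{\C^{1/3}(P)}$. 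Applying the definition of $|\cdot|_w$ on each $W_j$ and summing over the bounded number of pieces yields $|e^g h|_w\le C|h|_w$.

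The step I expect to require the most care is the $\B_w$ estimate, and especially the analogous unstable-side control that goes into the strong norm in Lemma~\ref{lem:multiplier}: one must verify that after cutting $W$ (or a close pair $W_1,W_2$) along $\cS_1^T$ the resulting pieces are genuine elements of $\W^s$ and that the restrictions of $e^g$ can be absorbed into valid test functions, with the matching between the pieces of $W_1$ and $W_2$ preserved up to an error of order $\ve^\gamma$. Both features follow from the finiteness of $\cP_1$ and the transversality of $\cS_1^T$ to the stable cones in \textbf{(H1)}, so the argument closely parallels the $g\equiv 0$ computation in \cite[Lemma~2.1]{demers zhang}.
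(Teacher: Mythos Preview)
Your factorization $\Lp_{T,g}=\Lp_T\circ M_{e^g}$ is formally correct, but the key step --- applying Lemma~\ref{lem:multiplier} with $f=e^g$ and $\cP=\cP_1$ --- fails. You assert that ``elements of $\cS_1^T$ are uniformly transverse to the stable cone field by \textbf{(H1)},'' but \textbf{(H1)} only guarantees transversality of $\cS_{-n}^T$ to $C^s(x)$, not of $\cS_n^T$. In dispersing billiards the curves in $\cS_1^T\setminus\cS_0$ are stable-like (they lie in the stable cone), so a curve $W\in\cW^s$ can run nearly parallel to $\partial P$ for $P\in\cP_1$. Condition~(1) of Lemma~\ref{lem:multiplier}, in particular the bound $m_W(N_\ve(\partial P)\cap W)\le C_1\ve$, therefore cannot be verified for $\cP_1$, and the lemma does not apply. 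The same obstruction undermines your direct weak-norm estimate: the decomposition of $W$ along $\cS_1^T$ need not produce a uniformly bounded number of subcurves, and the unstable-norm matching you allude to breaks down without transversality.

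The paper circumvents this by working on the \emph{image} side: for $h\in\C^1(M)$ the function $\Lp_{T,g}h$ has its discontinuities along $\cS_{-1}^{T,\Ho}$, which \emph{is} transverse to $C^s$ by \textbf{(H1)}. Equivalently, one can factor $\Lp_{T,g}=M_{e^{g\circ T^{-1}}}\circ\Lp_T$ and observe (as in the proof of Lemma~\ref{lem:analytic}) that $g\circ T^{-1}$ is H\"older on components of $M\setminus\cS_{-1}^T$, so Lemma~\ref{lem:multiplier} applies there. The paper's argument then has one additional wrinkle you do not address: because of the homogeneity strips, $\cS_{-1}^{T,\Ho}$ has countably many curves and the $\C^1$-norm of $\Lp_{T,g}h$ blows up near $T\cS_0$, so one must truncate the high-index strips and show their contribution to $\|\cdot\|_\B$ is small before invoking Lemma~\ref{lem:multiplier} on the remaining finite partition.
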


\begin{proof}
Let $h \in \C^1(M)$. The Lasota-Yorke inequalities of Proposition~\ref{prop:ly} show that
$\Lp_{T,g} h$ has finite norm in both $\B$ and $\B_w$.  In order to show that
$\Lp_{T,g} h$ belongs to $\B$, 
we must approximate
$\Lp_{T,g} h$ by $\C^1$ functions in the norm $\| \cdot \|_\B$.
Note that
$\Lp_{T,g} h$ has a countable number of smooth discontinuity curves given by
$\Si_{-1}^{\bH}$
(we include the images of boundaries of the homogeneity strips). These
curves define a countable partition $\cP$ of $M$ into open simply
connected sets, and each $\bH_k$ can intersect countably many
$P \in \cP$.  In addition, the $\C^1$ norm of
$\Lp_{T,g} h$ blows up near
the curves $T\Si_0$.

For $j \ge k_0$ let $P^j$ denote an element of $\cP$ such that $T^{-1}P^j
\subseteq \bH_j$.
Again, the labeling is not unique, but for each $j$, the number of
elements in $\cP$ which are
assigned the label $j$ is finite (even in the infinite horizon case).
Let $P^J = \cup_{j > J} P^j$.
We claim that $\| \Lp_{T,g} h |_{P^J}  \|_\B$ is arbitrarily small
for $J$ sufficiently large. On the finite set of $P^j$ with $j \le J$, the
$\C^1$ norm of
$\Lp_{T,g} h$ is finite and the modified partition $\cP^* = \{ P^j \}_{j \le
J} \cup \{ P^J \}$
satisfies the requirements of Lemma~\ref{lem:multiplier}.
So we may approximate $\Lp_{T,g} h$ using Lemma~\ref{lem:multiplier}
on $M \setminus P^J$ and approximate $\Lp_{T,g} h$ by 0 on $P^J$. Thus the
lemma
follows once we establish our claim.

Indeed, the claim is trivial using the estimates contained in Appendix~\ref{appendix}.
For example, we must estimate
$\| (\Lp_{T,g} h) |_{P^J} \|_s = \| 1_{P^J} \Lp_{T,g} h \|_s$.
Taking $W \in \W^s$ and $\psi \in \C^\beta(W)$ with $|W|^p|\psi|_{\C^\beta(W)} \le
1$, we write
\[
\int_W 1_{P^J} \Lp_{T,g} h \, \psi \, dm_W
= \int_{T^{-1} (W \cap P^J)}h (J_{\mu_0}T)^{-1} e^{S_n g} J_{T^{-1}W}T \, \psi \circ T
\, dm_W,
\]
and the homogeneous stable components of $T^{-1}(W \cap P^J)$ correspond
precisely to
the tail of the series considered in \eqref{eq:weak estimate} and
following and so can be
made arbitrarily small by choosing $J$ large (notice that we do not need
contraction here
so that we may use the simpler estimate similar to Section~\ref{weak norm}
applied to the
strong stable norm rather than the estimate of Section~\ref{stable norm}.

Similarly, in estimating $\| \Lp_{T,a} h \|_u$, one can see that the
contribution from
$P^J$ corresponds to the tail of the series from the estimates of
Section~\ref{unstable norm},
and so this too can be made arbitrarily small by choosing $J$ large.
\end{proof}

%%%%%%%%%%%%%%%%%%%%%%%%%%%%%%%%%%%%%%%%%%%%

\section{Proof of Theorem~\ref{thm:uniform}}
\label{uniform}

The proof of Theorem~\ref{thm:uniform} relies on the following
more general proposition.
Recall that an admissible potential $g$  for $T \in \cF$ is one that satisfies
$\sup_{P \in \cP_1} |g|_{\C^{1/3}(P)} < \infty$, where
$\cP_1$ is the partition of $M$ into connected components of $M \setminus \cS_1^T$.
For an admissible potential $g$, define $C_g := |g|_{\C^\alpha(M)} e^{|g|_{\C^\alpha(M)} \delta_0^\alpha }$.

\begin{proposition}
\label{prop:ly}
There exists $C>0$, depending only on {\bf (H1)}-{\bf(H5)}, such that for
any
$T \in \F$, admissible potential $g$, $h \in \B$ and $n \ge 0$,
\begin{eqnarray}
|\Lp_{T,g}^n h|_w & \le & C C_g |(J_{\mu_0}T^n)^{-1} e^{S_ng}|_{\infty} |h|_w,
\label{eq:weak norm} \\
\| \Lp_{T,g}^n h \|_s & \le &CC_g |(J_{\mu_0}T^n)^{-1} e^{S_ng}|_{\infty} \left( (
\theta_*^{(1-p)n} + \Lambda^{-\beta n}) \|h\|_s + C
\delta_0^{-p}|h|_w\right),
\label{eq:stable norm} \\
\| \Lp_{T,g}^n h \|_u &\le & CC_g |(J_{\mu_0}T^n)^{-1} e^{S_ng}|_{\infty} \left(
\Lambda^{-\gamma n} \| h\|_u + C C_3^n \|h \|_s\right),
\label{eq:unstable norm}
\end{eqnarray}
where $C_3$ is from Lemma~\ref{lem:growth}(d).
\end{proposition}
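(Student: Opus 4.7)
The three inequalities are classical Lasota–Yorke estimates adapted to the anisotropic norms of Section~\ref{norms}, and my plan is to treat them in the order stated, relying on a single change-of-variables computation that is then specialized for each norm.

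The common starting point is the identity
\[
\int_W \Lp_{T,g}^n h \cdot \psi \, dm_W \,=\, \sum_{W_i} \int_{W_i} h \cdot \widetilde\psi_i \, dm_{W_i}, \qquad \widetilde\psi_i \,=\, e^{S_n g}\, (\psi \circ T^n)\, \frac{J_{W_i}T^n}{J_{\mu_0}T^n},
\]
where $\{W_i\}$ are the homogeneous components of $T^{-n}W$, decomposed so that each piece has length at most $\delta_0$. The sum is controlled by iterating the one-step expansion bound \eqref{eq:step1} from \textbf{(H3)}; in the adapted norm this yields $\sum_i |J_{W_i}T^n|_*^{1-p} \le \theta_*^{(1-p)n}$ on short pieces plus at most one ``long'' descendant per curve (whose Jacobian is comparable to $|W|/\delta_0$, contributing the $\delta_0^{-p}$ factor). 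For each of the three estimates I will bound the H\"older norm of $\widetilde\psi_i$ on $W_i$, using \textbf{(H4)} to control $J_{W_i}T^n/J_{\mu_0}T^n$ up to distortion, the admissibility of $g$ together with Lemma~\ref{lem:multiplier} to control $e^{S_n g}$ (this produces the constant $C_g$), and the uniform hyperbolicity \textbf{(H1)} to transfer H\"older regularity of $\psi$ along stable curves (giving a factor $\Lambda^{-\alpha n}$, respectively $\Lambda^{-\beta n}$, from the contraction of $T^n$ on stable curves).

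For the weak norm \eqref{eq:weak norm} I would put everything into $\widetilde\psi_i$ and bound $|\widetilde\psi_i|_{\C^\alpha(W_i)}$ by $C\, C_g \, |(J_{\mu_0}T^n)^{-1} e^{S_n g}|_\infty \, |J_{W_i}T^n|_\infty$; summing and using $\sum_i |J_{W_i}T^n|_\infty \le C$ (by iterating \textbf{(H3)}) then gives the claim. For the strong stable norm \eqref{eq:stable norm} the new ingredient is to separate the preimage pieces by size: on short pieces $|W_i| \le \delta_0$ I factor out $|W_i|^p$ and apply $\|h\|_s$, which produces the factor $\sum_i |J_{W_i}T^n|_*^{1-p} \le \theta_*^{(1-p)n}$ together with an additional $\Lambda^{-\beta n}$ from the H\"older transfer of $\psi$ along stable curves; on the at most finitely many pieces of length close to $\delta_0$ I use the weak norm of $h$, absorbing the loss into the $\delta_0^{-p}|h|_w$ term.

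The strong unstable norm \eqref{eq:unstable norm} is where I expect the main technical work. Given two nearby curves $W_1, W_2 \in \W^s$ with $d_{\W^s}(W_1,W_2) \le \ve$ and matching test functions $\psi_1,\psi_2$, the plan is to pair the homogeneous components of $T^{-n}W_1$ and $T^{-n}W_2$ using the graph transform/foliation argument (the ``matching'' result that Section~\ref{norms} cites and that is standard in this setup, cf.\ Lemma~\ref{lem:verify H} and the companion Lemma~\ref{lem:angles}). After pairing, I split
\[
\int_{W_1} \Lp_{T,g}^n h\,\psi_1\, dm_W - \int_{W_2} \Lp_{T,g}^n h\,\psi_2\, dm_W \,=\, (\text{matched pairs}) + (\text{unmatched remainder}).
\]
The matched pairs yield a difference of the form $\int_{W_{1,i}} h \widetilde\psi_{1,i} - \int_{W_{2,i}} h\widetilde\psi_{2,i}$, which I control using $\|h\|_u$: the key is to verify the new distance $d_{\W^s}(W_{1,i},W_{2,i})$ and the new test-function distance $d_\beta(\widetilde\psi_{1,i},\widetilde\psi_{2,i})$ are both at most a constant times $\ve \Lambda^{-n}$; dividing by $\ve^\gamma$ then produces the factor $\Lambda^{-\gamma n}$. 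The unmatched pieces, whose total length is at most $\ve$ times a polynomial complexity $C_3^n$ arising from Lemma~\ref{lem:growth}(d), are estimated using $\|h\|_s$ and a short-curve argument analogous to the strong stable case. Bounding the test-function distance $d_\beta(\widetilde\psi_{1,i},\widetilde\psi_{2,i})$ under pullback, with simultaneous control of distortion on matched pairs of preimages, is the main obstacle; this is where the constraints $\gamma < \min\{p,\alpha-\beta,1/7\}$ and $\alpha \le \alpha_1/2$ will be crucial, and where the admissibility hypothesis on $g$ (with the explicit dependence on $C_g$) must be threaded through carefully.
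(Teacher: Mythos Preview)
Your outline for the weak norm \eqref{eq:weak norm} is essentially the paper's argument. The other two, however, contain genuine gaps.

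\medskip
\textbf{Strong stable norm.} Your proposed bound $\sum_i |J_{W_i}T^n|_*^{1-p} \le \theta_*^{(1-p)n}$ over all short preimage pieces is false: every piece in $\G_n(W)$ has length at most $\delta_0$ by construction, and the full sum $\sum_i (|W_i|/|W|)^p |J_{W_i}T^n|$ is only bounded (Lemma~\ref{lem:growth}(c)), not exponentially small. The contraction $\theta_*^{(1-p)n}$ comes from Lemma~\ref{lem:growth}(a) and applies only to the sub-collection $\I_n(W)$ of pieces that \emph{never} had a long ancestor at any intermediate time. Likewise, the factor $\Lambda^{-\beta n}$ does not arise from transferring $\psi$ wholesale: $|\psi\circ T^n|_{\C^\beta(W_i)}$ is merely bounded, not small. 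The paper obtains $\Lambda^{-\beta n}$ by first splitting $\psi\circ T^n = (\psi\circ T^n - \bar\psi_i) + \bar\psi_i$ on each piece, where $\bar\psi_i$ is the average; only the oscillation $\psi\circ T^n - \bar\psi_i$ picks up the contraction, and this is summed via Lemma~\ref{lem:growth}(c). The constant part $\bar\psi_i$ is then grouped by most recent long ancestor $W^k_j$: pieces with a long ancestor at some $1\le k\le n$ are estimated via $|h|_w$ (this is where $\delta_0^{-p}$ enters, since $|W^k_j|\ge \delta_0/3$), and only the always-short pieces in $\I_n(W)$ receive the $\theta_*^{(1-p)n}$ factor via Jensen's inequality. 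Your ``at most finitely many long pieces'' picture misses this hierarchical structure entirely.

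\medskip
\textbf{Strong unstable norm.} Your claim that $d_\beta(\widetilde\psi_{1,i},\widetilde\psi_{2,i}) \le C\ve\Lambda^{-n}$ on matched pieces is too optimistic and would fail. The stable Jacobians $J_{U^1_j}T^n$ and $J_{U^2_j}T^n$ on matched pieces are only comparable up to a factor $1+C\ve^{1/3-\beta}$ (Lemma~\ref{lem:angles}(b)), not $1+C\ve\Lambda^{-n}$, so the pulled-back test functions are not $d_\beta$-close at the scale required to invoke $\|h\|_u$ directly. The paper circumvents this by defining an auxiliary test function $\phi_j$ on $U^2_j$ as the transport of the test function from $U^1_j$ along the graph map $G_{U^1_j}\circ G_{U^2_j}^{-1}$; by construction $d_\beta(\widetilde\psi_{1,j},\phi_j)=0$, so the pairing $(U^1_j,\widetilde\psi_{1,j})$ versus $(U^2_j,\phi_j)$ is estimated by $\|h\|_u$ with curve distance $\le C\Lambda^{-n}\ve$, yielding the $\Lambda^{-\gamma n}$. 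The remaining difference $\int_{U^2_j} h(\phi_j-\widetilde\psi_{2,j})$ is then bounded using $\|h\|_s$ and a separate H\"older estimate (Lemma~\ref{lem:test}) giving a factor $\ve^{\alpha-\beta}$, which is absorbed since $\alpha-\beta\ge\gamma$. Without this intermediate $\phi_j$, your matched-pair estimate does not close.
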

The proof of this proposition is fairly technical, but has a lot of
similarity with the corresponding inequalities proved in \cite{demers zhang}
and \cite{DZ2} in the case $g=0$. 
We put the proof in Appendix~\ref{appendix} for completeness and to
draw out the explicit dependence on the added potential.

Let $\sigma := \max \{ \theta_*^{1-p}$, $\Lambda^{-\beta},
\Lambda^{-\gamma} \} < 1$,
and choose $N \ge 0$ such that
\begin{equation}
\label{eq:LY}
\begin{split}
\| \Lp_{T,g}^N h \|_\B &  = \| \Lp^N_{T,g} h \|_s + b \| \Lp_{T,g}^N h \|_u \\
& \le C_g |(J_{\mu_0}T^N)^{-1} e^{S_Ng}|_{\infty} \left(\frac{\sigma^N}{2} \| h
\|_s + C \delta_0^{-p} |h|_w + b \sigma^N \| h \|_u + b C C_3^N
\|h\|_s\right) \\
& \leq C_g |(J_{\mu_0}T^N)^{-1} e^{S_Ng}|_{\infty} \left(\sigma^N \| h \|_\B +
C_{\delta_0} |h|_w\right),
\end{split}
\end{equation}
providing $b$ is chosen sufficiently small with respect to $N$.  This is the standard
Lasota-Yorke inequality for $\Lp_{T,g}$ for a general potential $g$.
In order to specialize to the case $g = a \log J_{\mu_0}T$, we recall the following
lemma about the form of the Jacobian $J_{\mu_0}T$ derived in \cite{CZZ}.

\begin{lemma}{\em (\cite[Lemmas 3.2 and 4.2]{CZZ})}
\label{TFJ}
Fix $\ve, \tau_*$ and $C_0$ and consider $T_{\bF, \bG} \in \F(\ve, \tau_*, C_0)$.

First, assume there is no twist force $\bG=0$ and denote $T_{\bF, \mathbf{0}} =T_{\bF}$.  
Then the Jacobian of $T_{\bF}$ with respect
to $\mu_0$ is given by 
\beq
\label{cJeps} 
J_{\mu_0} T_{\bF} =\exp \left( \int_0^{\tau_{\bF}( \bx)}p \, \tfrac{\partial \kappa}{\partial \theta} \, dt \right), 
\eeq
where $\tau_{\bF}$ is the free path for the system $T_{\bF}$ and $\kappa$ is from \eqref{eq:formh}.

Next, assume $\bG \neq 0$.  Then by Assumption ({\bf A4}) the Jacobian of $T_\bE = T_{\bF, \bG}$ satisfies,
\beq
\label{eq:JFG}
J_{\mu_0} T_{\bE} = J_{\mu_0} \bG(T_{\bF} ) J_{\mu_0}T_{\bF}.
\eeq 
Moreover, we may write,
\beq
\label{eq:eps jac}
J_{\mu_0} T_{\bE} = 1 + \ve H, \qquad \mbox{where} \quad H = \tfrac 1\ve \left( J_{\mu_0}T_{\bE}-1\right),
\eeq
$|H|_{\infty} \le C_H$ for some $C_H>0$ independent of $\ve$ and $H$ is $C^{\alpha_1}$ for some\footnote{The restriction on $\alpha$ comes from the fact that $H$ is at least $C^{\alpha_0}$
for some $\alpha_0 > 1/3$ by Assumption {\bf (A3)}, but in general not smoother than
$\tau_{\bF}$, which is only $1/2$-H\"older continuous.} 
$\alpha_1 \in (1/3, 1/2]$
on each component of $\Si^{T_\bE}_1$.
\end{lemma}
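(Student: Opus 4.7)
The plan is to tackle the three parts of the lemma in order.

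For the Jacobian formula in the first part, I would write $T_{\bF}$ as the composition of the perturbed flow $\Phi_{\bF}^t$ from $\bx$ up to time $\tau_{\bF}(\bx)$ and the elastic reflection $\cR$. A direct calculation shows that $\cR$ preserves the form $\cos\vf\,dr\,d\vf$ on the collision section (the same computation that makes the unperturbed billiard map $\mu_0$-preserving), so the entire Jacobian comes from the flow piece. Using the coordinates $(x,y,\theta)$ on $\cM$ and the flow equations \eqref{flow}, together with the energy integral $\cE$ from (\textbf{A1}) which determines $p$ as a function of the coordinates, I would compute the infinitesimal phase-space contraction rate by taking the divergence of the vector field $(p\cos\theta,p\sin\theta,p\kappa)$ against the appropriate flow-invariant reference volume; after energy conservation accounts for the position-derivative contributions, the surviving term reduces to $p\,\partial_\theta\kappa$. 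Integrating along the trajectory for time $\tau_{\bF}(\bx)$ and translating from the flow volume back to the section measure yields \eqref{cJeps}.

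The factorization \eqref{eq:JFG} is immediate: by (\textbf{A4}) the twist $\bG$ acts only on post-reflection coordinates and preserves tangential collisions, so $T_\bE=\bG\circ T_{\bF}$ with matching singularity structure, and the chain rule gives the product formula. For the expansion \eqref{eq:eps jac}, I would Taylor expand both factors uniformly in $\ve$. From \eqref{eq:formh} and (\textbf{A3}), $\kappa=O(\ve)$ and hence $\partial_\theta\kappa=O(\ve)$; since $p\in[p_{\min},p_{\max}]$ is bounded by (\textbf{A1}) and $\tau_{\bF}\le\tau_{\max}$ by (\textbf{A2}), the integrand in \eqref{cJeps} is uniformly $O(\ve)$, giving $J_{\mu_0}T_{\bF}=1+\ve H_1$ with $|H_1|_\infty$ bounded independently of $\ve$. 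Similarly $\bG-\mathrm{Id}=O(\ve)$ in $C^1$ forces $J_{\mu_0}\bG=1+\ve H_2$ with uniform bound. Multiplying through \eqref{eq:JFG} then yields $J_{\mu_0}T_\bE=1+\ve H$ with $|H|_\infty\le C_H$ independent of $\ve$.

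For the H\"older regularity of $H$ on components of $\cS_1^{T_\bE}$, I would track the regularity of each ingredient along the composition: the classical $C^{1/2}$ regularity of the free-flight time $\tau_{\bF}$ away from grazing collisions, and the $C^{1+\alpha_0}$ regularity of $\bF$ and $\bG$ with $\alpha_0>1/3$ from (\textbf{A3}). The minimum of these H\"older exponents gives $\alpha_1\in(1/3,1/2]$, with bounds uniform in $\ve$ on each continuity component. The main obstacle is precisely this last step: one must verify that the $L^\infty$ bound on $H$ upgrades to a uniform H\"older bound on each continuity component of $T_\bE$, which requires combining the quantitative regularity of $\tau_{\bF}$ (derived from the transversality of $\cS_0$ to the stable cones) with the force regularity in a way that respects the singularity structure of $T_\bE$ and is stable as $\ve\to0$.
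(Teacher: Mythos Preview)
The paper does not actually prove this lemma; it is stated as a citation of \cite[Lemmas 3.2 and 4.2]{CZZ}. The only argument the paper supplies is the short paragraph immediately following the statement, where $\bG$ is written in $(r,\vf)$ coordinates as $\bG(r,\vf)=(r,\vf)+(G^1,G^2)$ and the Jacobian $J_{\mu_0}\bG(T_{\bF}\bx)$ is expanded explicitly as $1$ plus terms involving $\partial G^i/\partial r$ and $\partial G^i/\partial \vf$, each of size $\le\ve$ by ({\bf A3}); combined with \eqref{cJeps} this verifies that $C_H$ can be taken independent of $\ve$.

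Your proposal therefore goes well beyond what the paper itself provides, and your outline is consistent with the argument in \cite{CZZ}: the decomposition $T_{\bF}=\cR\circ\Phi_{\bF}^{\tau_{\bF}}$ with $\cR$ preserving $\cos\vf\,dr\,d\vf$, the divergence computation reducing to $p\,\partial_\theta\kappa$, the chain-rule factorization $T_{\bE}=\bG\circ T_{\bF}$, and the $O(\ve)$ bounds from \eqref{eq:formh} together with ({\bf A1})--({\bf A3}) are all the right ingredients. Your treatment of $J_{\mu_0}\bG=1+\ve H_2$ is exactly what the paper's supplementary paragraph makes explicit in coordinates. Your identification of the H\"older exponent as $\min\{\alpha_0,1/2\}$ coming from the regularity of $\tau_{\bF}$ versus that of the forces also matches the footnote in the statement. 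In short, your sketch is correct and is essentially a condensed version of the argument the paper defers to \cite{CZZ}, with the paper itself only filling in the $J_{\mu_0}\bG$ piece.
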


Writing the twist map $\bG$ in $(r, \vf)$ coordinates, we have
$\bG(r,\vf) = (r,\vf) + (G^1(r,\vf), G^2(r,\vf))$, where $G^1$ and $G^2$ are smooth functions of $r$ and $\vf$.
Since by {\bf (A3)} the singularity sets of $T_{\bF, \bG}$ and $T_{\bF}$ are the same, 
we may write
\[
J_{\mu_0}\bG(T_{\bF}\bx) = 1 + \frac{\partial G^1}{\partial r}(T_{\bF}\bx) + \frac{\partial G^2}{\partial s}(T_{\bF}x) + \frac{\partial G^1}{\partial r}(T_{\bF}\bx) \frac{\partial G^2}{\partial s}(T_{\bF}\bx)
- \frac{\partial G^1}{\partial s}(T_{\bF}\bx) \frac{\partial G^2}{\partial r}(T_{\bF}\bx),
\]
whenever $\bx = (r, \vf) \notin \Si^{T_{\bF}}_1$.  Again using {\bf (A3)}, we have that the
$|\frac{\partial G^i}{\partial r}|$ and $|\frac{\partial G^i}{\partial s}|$ are bounded by $\ve$.

This representation combined with \eqref{cJeps} verifies that the constant
$C_H$ given by Lemma~\ref{TFJ} can be chosen independently of $\ve$.

%%%%%%%%%%%%%%%%%%%%%%%%%%%%%%%%%%%%%
\subsection{A Spectral Gap for $\Lp_{T,a}$}
\label{sprectral gap}

Now we fix $a_0 >0$ and the interval
$[-a_0, 1+a_0]$ as in the statement of Theorem~\ref{thm:uniform}.
Due to Lemma~\ref{TFJ}, we may choose $\eps_0 >0$ so small that
for all $a \in [-a_0, 1+a_0]$,
\beq
\label{eq:eps_0}
\frac{(1-\sign(a-1)C_H\eps_0)^{a-1}}{(1+\sign(a-1)C_H\eps_0)^{a-1}}>\sigma,
\eeq
where $\sigma$ is from \eqref{eq:LY}.

The next lemma establishes the quasi-compactness of $\Lp_{T,a}$.

\begin{lemma}
\label{lemma:quasi}
Let $a \in [-a_0, 1+a_0]$ and $\ve_0$ be as chosen in \eqref{eq:eps_0}.
Then for all $T \in \F(\ve_0, \tau_*, C_0)$,
$\Lp_{T, a}$ is quasi-compact as an operator on $\B$.
\end{lemma}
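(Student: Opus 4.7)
The plan is a Hennion-type argument. I will combine the Lasota-Yorke inequality of Proposition~\ref{prop:ly}, specialized to the admissible potential $g = a\log J_{\mu_0}T$, with the compact embedding $\B \hookrightarrow \B_w$ from Lemma~\ref{lem:distr}(iii) to bound $\rho_{\text{ess}}(\Lp_{T,a})$ from above; then I will obtain a matching lower bound on $\rho(\Lp_{T,a})$ by directly testing iterates of the constant density $\mathbf{1}$ on a short stable curve. The hypothesis \eqref{eq:eps_0} is precisely what forces strict separation between these two bounds.

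By Lemma~\ref{TFJ}, the potential $g = a\log(1+\ve H)$ is admissible with admissibility constant $C_g$ bounded uniformly in $a \in [-a_0, 1+a_0]$ and $T \in \cF(\ve_0, \tau_*, C_0)$, since $H$ is $\alpha_1$-H\"older on components of $M \setminus \cS_1^T$ with $\alpha_1 > 1/3$. Writing $R := 1 + \sign(a-1) C_H \ve_0$, the same lemma gives $J_{\mu_0}T^n \in [(1-C_H\ve_0)^n, (1+C_H\ve_0)^n]$, hence $|(J_{\mu_0}T^n)^{a-1}|_\infty \le R^{n(a-1)}$. Substituting into \eqref{eq:LY} and \eqref{eq:weak norm} and choosing $N$ so that $C_g \sigma^N < 1/2$, a telescoping iteration over blocks of length $N$ yields, for every $k \ge 1$,
\[
\|\Lp_{T,a}^{kN} h\|_\B \le 2^{-k} R^{kN(a-1)} \|h\|_\B + D\, R^{kN(a-1)} |h|_w, \qquad |\Lp_{T,a}^n h|_w \le C' R^{n(a-1)} |h|_w,
\]
with $D$ independent of $k$. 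Since the unit ball of $(\B, \|\cdot\|_\B)$ is compactly embedded in $(\B_w, |\cdot|_w)$ by Lemma~\ref{lem:distr}(iii), the Hennion-Nussbaum formula applied to this Lasota-Yorke pair, combined with $N \to \infty$ (so $C_g^{1/N} \to 1$), gives $\rho_{\text{ess}}(\Lp_{T,a}) \le \sigma R^{a-1}$.

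For the matching lower bound, set $R_- := 1 - \sign(a-1) C_H \ve_0$. A direct computation shows $\Lp_{T,a}^n \mathbf{1} = (J_{\mu_0}T^n \circ T^{-n})^{a-1}$, which is bounded below a.e.\ by $R_-^{n(a-1)}$. Fix any $W \in \W^s$ with $|W| \ge \delta_0/2$ and take the constant test function $\psi \equiv 1$ (for which $|1|_{\C^\alpha(W)} = 1$) in the definition \eqref{eq:weak} of the weak norm:
\[
|\Lp_{T,a}^n \mathbf{1}|_w \;\ge\; \int_W (J_{\mu_0}T^n \circ T^{-n})^{a-1}\, dm_W \;\ge\; \tfrac{\delta_0}{2} R_-^{n(a-1)}.
\]
Combined with $|\cdot|_w \le c \|\cdot\|_\B$ on $\B$, this gives $\|\Lp_{T,a}^n \mathbf{1}\|_\B \ge c_0 R_-^{n(a-1)}$, hence $\rho(\Lp_{T,a}) \ge R_-^{a-1}$. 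By the choice of $\ve_0$ in \eqref{eq:eps_0}, $R_-^{a-1} > \sigma R^{a-1} \ge \rho_{\text{ess}}(\Lp_{T,a})$, which is quasi-compactness. The main technical point is the uniform control of $C_g$ in $(a, T)$; this relies on assumption \textbf{(A3)} propagating through Lemma~\ref{TFJ} to make $\log(1+\ve H)$ uniformly H\"older on components of $M \setminus \cS_1^T$.
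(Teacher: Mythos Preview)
Your proof is correct and follows essentially the same approach as the paper: a Hennion argument combining the Lasota-Yorke inequality from Proposition~\ref{prop:ly} (specialized to $g=a\log J_{\mu_0}T$, so that $(J_{\mu_0}T^N)^{-1}e^{S_Ng}=(J_{\mu_0}T^N)^{a-1}$) with the compact embedding $\B\hookrightarrow\B_w$ for the upper bound on $\rho_{\text{ess}}$, and a direct test of $\Lp_{T,a}^n\mathbf{1}=(J_{\mu_0}T^n)^{a-1}\circ T^{-n}$ for the lower bound on $\rho$. The only cosmetic differences are that the paper tests against the strong stable norm $\|\cdot\|_s$ rather than $|\cdot|_w$, and that your displayed telescoped bound $2^{-k}R^{kN(a-1)}$ is slightly weaker than the $(C_g\sigma^N R^{N(a-1)})^k$ you actually need to conclude $\rho_{\text{ess}}\le\sigma R^{a-1}$ after sending $N\to\infty$---a harmless slip, since your closing remark about $C_g^{1/N}\to1$ shows you have the sharper version in mind.
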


\begin{proof} 
When $g = a\log J_{\mu_0}T$, we have $(J_{\mu_0}T^N)^{-1} e^{S_Ng} = (J_{\mu_0}T^N)^{a-1}$
and so \eqref{eq:LY} together with Lemma~\ref{TFJ} yield
the required inequality \eqref{eq:uniform LY}
for Theorem~\ref{thm:uniform}.  Due to the compactness of the
unit ball of $\B$ in $\B_w$ \cite[Lemma
3.10]{demers zhang},
this implies the essential spectral radius 
of $\Lp_{T,a}$, $\rho_{\mbox{\scriptsize ess}}(\Lp_{T,a})$ is at most 
$\sigma (1+ \sign(a-1)C_H\eps_0)^{a-1}$.
To prove that $\Lp_{T,a}$ is quasi-compact, it remains to show that the spectral radius of 
$\Lp_{T, a}$, $\rho(\Lp_{T,a})$, is strictly larger than $\rho_{\mbox{\scriptsize ess}}(\Lp_{T,a})$.

To obtain a lower bound on $\rho(\Lp_{T,a})$,
note that
$$
\rho(\Lp_{T,a})=\lim_{n\to\infty}\|\Lp_{T,a}^n\|_{\B}^{1/n}\geq
\lim_{n\to\infty}\|\Lp_{T,a}^n 1\|_{s}^{1/n}.
$$
Then we have
\begin{align*}
\|\Lp_{T,a}^n 1\|_{s}&=\sup_{W\in\W^s}\sup_{\substack{\psi\in\C^\beta(W)\\
|W|^p |\psi|_{\C^\beta(W)} \leq 1}}\int_W \Lp_{T,a}^n 1 \cdot\psi \; dm_W\\
&\geq \sup_{W\in \W^s}\sup_{\substack{\psi\in\C^\beta(W)\\
|W|^p |\psi|_{\C^\beta(W)}\leq 1}}\inf \Lp_{T,a}^n 1 \int_W\psi\,dm_W
\; \geq \; \inf (1+\eps_0 H)^{(a-1)n} \, \| 1\|_s,
\end{align*}
using Lemma~\ref{TFJ} and the identity $\Lp_{T,a}^n 1 = (J_{\mu_0}T^n)^{a-1} \circ T^{-n}$.
This implies that
$$
\rho(\Lp_{T,a})=\lim_{n\to\infty} \|\Lp_{T,a}^n\|^{\frac{1}{n}}\geq
(1-\text{sign}(a-1)C_H \eps_0)^{a-1}.
$$
Combining this with the upper bound on the 
essential spectrum of $\Lp_{T,a}$ and the choice of $\eps_0$ from \eqref{eq:eps_0}, we conclude
\[
\rho_{\mbox{\scriptsize ess}}(\Lp_{T,a})
 \le \sigma (1+\sign(a-1)C_H\eps_0)^{a-1}
< (1-\sign(a-1)C_H\eps_0)^{a-1} \le \rho(\Lp_{T,a}).
\]
\end{proof}

Recall from Section~\ref{recall property} that a function $g: M \to \mathbb{R}$ is an admissible
potential for $T \in \cF$ if $|g|_{\C^{1/3}(\cP_1)} := \sup_{P \in \cP_1} |g|_{\C^{1/3}(P)} < \infty$, where
$\cP_1$ is the partition of $M$ into connected components of $M \setminus \cS_1^T$.

\begin{lemma}
\label{lem:analytic}
Suppose $g$ is an admissible potential for $T \in \cF(\ve_0, \tau_\ast, C_0)$.
Then the map $z \mapsto \Lp_{T,zg}$ is analytic for all $z\in \mathbb{C}$.
\end{lemma}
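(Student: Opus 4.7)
The plan is to exploit the factorization $\Lp_{T,zg} = \Lp_T \circ M_{e^{zg}}$, where $M_f$ denotes the multiplication operator by $f$, and to expand $e^{zg}$ as a power series in $z$. Formally,
\[
\Lp_{T,zg} h \;=\; \Lp_T(e^{zg} h) \;=\; \sum_{n=0}^\infty \frac{z^n}{n!}\, \Lp_T(g^n h),
\]
and the goal is to show that this series converges in operator norm on $\B$ uniformly on compact subsets of $\mathbb{C}$. Since each partial sum $S_N(z) := \sum_{n=0}^N \frac{z^n}{n!}\Lp_T(g^n \cdot)$ is a polynomial in $z$ with operator coefficients (hence entire), this will yield analyticity.

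The key estimate to establish is a bound of the form $\|\Lp_T(g^n h)\|_\B \le K \cdot K_g^n\,\|h\|_\B$ with constants independent of $n$. This rests on two ingredients. First, by Lemma~\ref{lem:L well defined} applied to the zero potential, $\Lp_T$ is bounded on $\B$. Second, I would apply Lemma~\ref{lem:multiplier} with $f = g^n$ and partition $\cP = \cP_1$ (the finite partition into connected components of $M\setminus \cS_1^T$), which is legitimate provided $q = 1/3 > \gamma/(1-\gamma)$; this holds since the norms in Section~\ref{norms} are set up with $\gamma < 1/7$, so $\gamma/(1-\gamma) < 1/6$. Combined with the submultiplicativity of the H\"older norm on each $P \in \cP_1$, namely $|g^n|_{\C^{1/3}(P)} \le |g|_{\C^{1/3}(P)}^n$, this yields
\[
\|g^n h\|_\B \;\le\; C\,\|h\|_\B\, M_g^n, \qquad M_g := \sup_{P \in \cP_1}|g|_{\C^{1/3}(P)}.
\]
Hence
\[
\sum_{n=0}^\infty \frac{|z|^n}{n!}\,\|\Lp_T(g^n h)\|_\B \;\le\; C'\,\|h\|_\B\, \exp\!\bigl( |z| M_g\bigr),
\]
which converges for every $z \in \mathbb{C}$, and uniformly for $z$ in any bounded disk. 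Thus $S_N(z)$ converges in the operator norm on $\B$ to an entire operator-valued function $S_\infty(z)$.

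It remains to identify $S_\infty(z)$ with $\Lp_{T,zg}$. For $h \in \C^1(M)$ the pointwise identity $e^{zg}h = \sum_n z^n g^n h/n!$ holds, and since $g$ is bounded on $M \setminus \cS_1^T$ the series converges in $L^\infty(\mu_0)$; applying the operator $\Lp_T$ (continuous on $\B$) and using the $\B$-norm convergence established above gives $S_\infty(z)h = \Lp_T(e^{zg}h) = \Lp_{T,zg}h$. Since $zg$ is admissible for every $z$, Lemma~\ref{lem:L well defined} guarantees that $\Lp_{T,zg}$ is continuous on $\B$, so the identity $S_\infty(z) = \Lp_{T,zg}$ extends from the dense subspace $\C^1(M)$ to all of $\B$. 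The main obstacle in the argument is the multiplier estimate with constants uniform in $n$, so that the factorials in the exponential series can absorb them; once the submultiplicativity on the partition $\cP_1$ is in place, the remaining steps (power-series convergence plus a density identification) are routine.
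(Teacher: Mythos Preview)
Your overall strategy is the right one and matches the paper's, but there is a genuine gap in how you invoke Lemma~\ref{lem:multiplier}. You apply it with the partition $\cP_1$ (connected components of $M\setminus\cS_1^T$) and multiplier $g^n$. The trouble is condition~(1) of that lemma: it requires that for each $P\in\cP$ and $W\in\cW^s$, the intersection $P\cap W$ has a uniformly bounded number of components and $m_W(N_\ve(\partial P)\cap W)\le C_1\ve$. This fails for $\cP_1$. The curves in $\cS_1^T\setminus\cS_0$ are stable-type curves (their tangent vectors lie in $C^s$), so they are \emph{not} uniformly transverse to elements of $\cW^s$; a stable curve $W$ can meet a branch of $\cS_1^T$ tangentially or at arbitrarily small angle, which destroys both the component bound and the linear neighborhood estimate. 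Hypothesis {\bf (H1)} only guarantees transversality of $\cS_{-n}^T$ to $C^s$, not of $\cS_n^T$.

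The paper avoids this by pushing the multiplier through $\Lp_T$: using the identity $\Lp_T(g^n h)=(g^n\circ T^{-1})\,\Lp_T h$, one applies Lemma~\ref{lem:multiplier} with multiplier $g^n\circ T^{-1}$ and partition $\cP_{-1}$ (components of $M\setminus\cS_{-1}^T$). Since $\cS_{-1}^T$ \emph{is} uniformly transverse to $C^s$ by {\bf (H1)}, condition~(1) now holds. There is a price: composing with $T^{-1}$ degrades H\"older regularity because of the unbounded expansion near $\cS_0$. The paper checks (via the estimate $|T^{-1}W|\le C|W|^{1/2}$) that $g\circ T^{-1}$ is $1/6$-H\"older on $\cP_{-1}$ whenever $g$ is $1/3$-H\"older on $\cP_1$, and $1/6>\gamma/(1-\gamma)$ since $\gamma<1/7$. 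With this in place, your submultiplicativity and power-series argument goes through unchanged.
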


\begin{proof}
Define the operator $\cA_n h = \Lp_T(g^nh) = g^n \circ T^{-1} \Lp_T h$, for $h \in \B$.  
Notice that since $g$ is H\"older continuous on elements of $\cP_1$, it follows that
$g \circ T^{-1}$ is H\"older continuous on elements of $\cP_{-1}$, the partition of $M$
into connected components of $M \setminus \cS_{-1}^T$.  Since $\cS_{-1}^T$ consists of
finitely many
curves that are uniformly transverse to the stable cone, we claim that 
$g \circ T^{-1}$ satisfies the assumptions of Lemma~\ref{lem:multiplier}.
Indeed, we have the following estimate for the H\"older regularity of $g \circ T^{-1}$.
For any $x, y$ in the same component of $\cS_{-1}^T$,
\begin{equation}
\label{eq:holder}
\frac{|g \circ T^{-1}(x) - g \circ T^{-1}(y)|}{d(x,y)^{1/6}} = 
\frac{|g \circ T^{-1}(x) - g \circ T^{-1}(y)|}{d(T^{-1}(x), T^{-1}(y))^{1/3}} \frac{d(T^{-1}(x), T^{-1}(y))^{1/3}}{d(x,y)^{1/6}} .
\end{equation}
The first factor is bounded by $|g|_{\C^{1/3}(\cP_1)}$, while the second factor is uniformly bounded
due to the fact that $|T^{-1}W| \le C |W|^{1/2}$ for any $W \in \cW^s$ by {\bf (H3)}
(see, for example, \cite[Exercise~4.50]{chernov book}).  Thus $g \circ T^{-1}$ is $1/6$-H\"older
continuous on $\cP_1$ and $1/6 \ge \gamma/(1-\gamma)$ since $\gamma < 1/7$
so that $g \circ T^{-1}$
satisfies the conditions of Lemma~\ref{lem:multiplier}.

Now 
Lemma~\ref{lem:multiplier} implies that $g^n \circ T^{-1} \Lp_T h \in \B$ and moreover,
\[
\| \cA_n h \|_\B = \| g^n \circ T^{-1} \Lp_T  h \|_\B \le C \| \Lp_T h\|_\B |g^n \circ T^{-1}|_{\C^{1/6}(\cP_{-1})}
\le C \| h \|_\B 
|g|^n_{C^{1/3}(\cP_1)} ,
\]
where we used \eqref{eq:holder} along with the simple fact that
$|fg|_{C^q}\le |f|_{\C^q} |g|_{\C^q}$ to estimate
$|g^n|_{\C^q} \le |g|_{\C^q}^n$.

Therefore, the operator $\sum_{n=0}^\infty \frac{z^n}{n!} \cA_n$ is well defined
on $\B$
and equals $\Lp_{T, zg}$ since once we know the sum converges,
\[
\sum_{n=0}^\infty \frac{z^n}{n!} \cA_n h(\psi) = h \left(
\sum_{n=0}^\infty \frac{z^n}{n!}
g^n \cdot \psi \circ T \right) = h(e^{zg} \psi \circ T) = \Lp_{T, zg}
h(\psi),
\;\; \; \mbox{for } \psi \in \C^\alpha(\W^s) .
\]
\end{proof}

With the analyticity of $z \mapsto \Lp_{zg}$ established, it follows from
analytic
perturbation theory \cite{kato} that both the discrete
spectrum and the corresponding spectral projectors of $\Lp_{T,zg}$ vary
smoothly with $z$. 
We will use the smooth dependence of the spectrum on $z$ to prove that
$\Lp_{T,a}$ has a spectral gap.

\begin{lemma}
\label{lem:peripheral}
Fix $a_0, \tau_*, C_0 >0$ and let $\ve_0$ be as in \eqref{eq:eps_0}.  
Then there exists $0 < \ve_1 \le \ve_0$ such that for all 
$T = T_{\bE} \in \cF(\ve_1, \tau_*, C_0)$,
$\Lp_{T,a}$ has a simple eigenvalue $\lambda=\rho(\Lp_{T,a})$ and all
other eigenvalues have modulus strictly smaller than $\lambda$, i.e.~$\Lp_{T, a}$
has a spectral gap as an operator on $\B$.
\end{lemma}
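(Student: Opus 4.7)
The plan is to apply the Keller--Liverani perturbation theorem \cite{keller liverani}, using the unperturbed transfer operator $\Lp_{T_0}$ as the reference. The crucial observation is that $J_{\mu_0}T_0 \equiv 1$, so $\Lp_{T_0, a} = \Lp_{T_0}$ for every $a \in \mathbb{R}$, independent of $a$. By \cite[Cor.~2.4]{DZ2} (cited in Section~\ref{sec:transfer}), $\Lp_{T_0}$ has a spectral gap on $\B$: its leading eigenvalue $1$ is simple, with eigenfunction the constant $\mathbf{1}$ (corresponding to the invariant measure $\mu_0$), and the rest of its spectrum lies in a disk of strictly smaller radius.

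The first step is to establish that $\|\Lp_{T, a} - \Lp_{T_0}\|_{\B \to \B_w} \to 0$ as $\ve \to 0$, uniformly in $a \in [-a_0, 1+a_0]$. Writing $T = T_\bE$ and decomposing
\[
\Lp_{T, a} - \Lp_{T_0} \;=\; \Lp_T \bigl(((J_{\mu_0}T)^{-a} - 1) \cdot \bigr) + (\Lp_T - \Lp_{T_0}),
\]
Lemma~\ref{TFJ} gives $(J_{\mu_0}T)^{-a} - 1 = O(\ve)$ uniformly for $a \in [-a_0, 1+a_0]$ together with the H\"older regularity on $\cP_1$ needed to apply Lemma~\ref{lem:multiplier}, controlling the first term. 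The second term is controlled using the closeness of $T$ to $T_0$ in the distance $d_\F$ of Section~\ref{distance}, by adapting the stability estimates of \cite[Sect.~6]{DZ2}. Combined with the uniform Lasota--Yorke inequalities \eqref{eq:uniform LY} from Theorem~\ref{thm:uniform}, the Keller--Liverani theorem applies uniformly in $a$: for $\ve_1$ sufficiently small, it produces an eigenvalue $\lambda_a$ of $\Lp_{T,a}$ close to $1$ together with a spectral projector $P_a$ close (in $\B \to \B_w$ norm) to the rank-one projector $P_0$ associated with $\Lp_{T_0}$. Rank-one-ness of $P_a$ forces $\lambda_a$ to be simple, and Keller--Liverani yields a uniform gap between $|\lambda_a|$ and the rest of the spectrum. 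The lower bound on $\rho(\Lp_{T,a})$ in Lemma~\ref{lemma:quasi} then implies $\lambda_a = \rho(\Lp_{T,a})$, and the corresponding eigenfunction $h_a = P_a \mathbf{1}$ is a positive Borel measure because it is the $\B_w$-limit of the positive measures $\lambda_a^{-n} \Lp_{T,a}^n \mathbf{1}$.

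The main obstacle is verifying the convergence $\|\Lp_T - \Lp_{T_0}\|_{\B \to \B_w} \to 0$ and, more subtly, ensuring that every bound entering the Keller--Liverani machinery is uniform in $a \in [-a_0, 1+a_0]$. The former requires a careful comparison of the singularity sets $\cS_{-1}^T$ and $\cS_{-1}^{T_0}$ together with Jacobian comparisons supplied by conditions (\textbf{C1})--(\textbf{C4}), paralleling the stability analysis in \cite{DZ2}. The latter reduces to the already-established uniformity of the Lasota--Yorke constants from Theorem~\ref{thm:uniform}, together with the fact that the multiplier $(J_{\mu_0}T)^{-a} - 1$ is $O(\ve)$ uniformly on the compact interval $[-a_0, 1+a_0]$ via Lemma~\ref{TFJ}. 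Once these two uniform bounds are in hand, the spectral conclusions follow by a direct application of the Keller--Liverani framework.
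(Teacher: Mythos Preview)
Your approach is correct in spirit but differs from the paper's, and there are two small slips worth flagging.

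\textbf{Comparison with the paper.} The paper does \emph{not} apply Keller--Liverani directly from $\Lp_{T_0}$ to $\Lp_{T_{\bE},a}$. Instead it proceeds in two steps. First, it invokes \cite{DZ2} to get a \emph{uniform} spectral gap for $\Lp_{T_{\bE}} = \Lp_{T_{\bE},0}$ over all $T_{\bE}\in\cF(\ve_2,\tau_*,C_0)$ for some $\ve_2>0$. Second, and this is the key point, it uses Lemma~\ref{lem:analytic} (analyticity of $z\mapsto\Lp_{T,-zs}$) to obtain the bound
\[
\|\Lp_{T_{\bE}} - \Lp_{T_{\bE},a}\|_{\B\to\B} \;\le\; C\,|a|\,\|\Lp_{T_{\bE}}\|_{\B}\,|s|_{\C^{1/3}(\cP_1)}\,e^{|a|\,|s|_{\C^{1/3}(\cP_1)}},
\]
which is $O(\ve)$ uniformly in $a\in[-a_0,1+a_0]$ because $s=-\log(1+\ve H)$ has $\C^{1/3}(\cP_1)$-norm of order $\ve$ by Lemma~\ref{TFJ}. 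This is closeness in the \emph{strong} operator norm on $\B$, so ordinary analytic perturbation theory \cite{kato} immediately transfers the spectral gap from $\Lp_{T_{\bE}}$ to $\Lp_{T_{\bE},a}$. The paper thereby avoids having to redo any $\B\to\B_w$ stability estimate; the hard work of comparing $\Lp_{T_{\bE}}$ with $\Lp_{T_0}$ is quoted wholesale from \cite{DZ2}. Your route is conceptually more direct (one perturbation instead of two), but it forces you to reprove essentially the content of Lemma~\ref{lem:cont ve} as part of this lemma, whereas the paper postpones that estimate and gets the present lemma almost for free from analyticity.

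\textbf{Minor slips.} First, the multiplier should be $(J_{\mu_0}T)^{a}-1$, not $(J_{\mu_0}T)^{-a}-1$: recall $\Lp_{T,a}=\Lp_{T,-as}$ with $s=-\log J_{\mu_0}T$, so $e^{-as}=(J_{\mu_0}T)^a$. Second, you cannot apply Lemma~\ref{lem:multiplier} directly with the partition $\cP_1$, because curves in $\cS_1^T$ need not be transverse to $\cW^s$ (only $\cS_{-n}^T$ is assumed transverse in {\bf (H1)}). You must first push the multiplier through $\Lp_T$, writing $\Lp_T\bigl(((J_{\mu_0}T)^a-1)h\bigr) = \bigl((J_{\mu_0}T)^a-1\bigr)\circ T^{-1}\cdot\Lp_T h$, and then apply Lemma~\ref{lem:multiplier} on $\cP_{-1}$, exactly as in the proof of Lemma~\ref{lem:analytic}; the $O(\ve)$ smallness of the H\"older norm survives this composition by the argument around \eqref{eq:holder}.
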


\begin{proof}
Fix a uniform family $\cF(\ve_0, \tau_*, C_0)$ satisfying {\bf (H1)}-{\bf (H5)} 
and \eqref{eq:eps_0}
such that
$\Lp_{T}$ has a spectral gap for all $T \in \cF(\ve_0, \tau_*, C_0)$ by \cite{DZ2}.

Fixing $T = T_{\bE} \in \cF(\ve_0, \tau_\ast, C_0)$ and using Lemma~\ref{TFJ}, 
we know $-s$ is an admissible potential for $T$.  
According to Lemma~\ref{lem:analytic}, the derivative,$\frac{d}{dz} \Lp_{T, -zs} = \sum_{n \ge 1} \frac{z^{n-1}}{(n-1)!} \cA_n$ is well-defined as a bounded linear operator on $\B$, and
\[
\| \frac{d}{dz} \Lp_{T, -zs} \|_{\B} \le C \| \Lp_T \|_{\B} |s|_{\C^{1/3}(\cP_1)} e^{|z| |s|_{\C^{1/3}(\cP_1)}},
\]
for a uniform constant $C$ (depending only on $\cF$).  Thus for any $a \in [-a_0, 1+a_0]$,
\begin{equation}
\label{eq:close}
\| \Lp_T - \Lp_{T, a} \|_{\B} \le C \| \Lp_T \|_{\B} |s|_{\C^{1/3}(\cP_1)} e^{|a| |s|_{\C^{1/3}(\cP_1)}} |a|
\le C |a| \| \Lp_T \|_{\B} | \log (1+ \ve H) |_{\C^{1/3}(\cP_1)} e^{|\log (1+ \ve H)|_{\C^{1/3}(\cP_1)}^{|a|}},
\end{equation}
where we have used Lemma~\ref{TFJ} and $\ve$ is the optimal $\ve$ for $\bE$.  

It follows from \cite{DZ2} that the spectrum of $\Lp_{T_\bE}$ varies continuously in $\bE$
and converges to the spectrum $\Lp_{T_0}$ as $\bE$ shrinks to 0 (in $\C^1$ norm).  Thus
there exists $0 < \ve_2$, $\ve_2 \le \ve_0$ such that all $T_{\bE} \in \cF(\ve_2, \tau_*, C_0)$
enjoy a uniform spectral gap, i.e., the distance between 1 and the second largest eigenvalue
of $\Lp_{T_\bE}$ is bounded below by a uniform constant; call this constant $\delta >0$.
Then by \eqref{eq:close}, there exists $\ve_1>0$ such that
$\Lp_{T, a}$ has a spectral gap for any $T_{\bE} \in \cF(\ve_1, \tau_*, C_0)$ and
all $a \in [-a_0, 1+a_0]$.
\end{proof}

We will also find it convenient to have the following continuity in $\ve$.

\begin{lemma}
\label{lem:cont ve}
Fix $a \in [-a_0, 1+a_0]$ and $T_0 = T_{\mathbf{0}, \mathbf{0}} \in \cF(\ve_1, \tau_*, C_0)$.
There exists $C>0$ such that for all $\ve \le \ve_1$ and all $T_{\bE} \in \cF(\ve_1, \tau_*, C_0)$ with
$d_{\cF}(T_0, T_{\bE}) \le \ve$, we have 
\[
\sup \{ | \Lp_{T_{\bE}, a} h - \Lp_{T_0, a} h |_w : \|h\|_{\cB} \le 1 \} \le C \ve^{\gamma/2} .
\]
This implies in particular that the leading eigenvalue and associated spectral projectors of 
$\Lp_{T_{\bE},a}$ vary continuously with $\bE$ in the $\ve_1$ neighborhood of $T_0$. 
\end{lemma}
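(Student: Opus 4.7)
The plan is to establish the quantitative $\cB \to \cB_w$ continuity estimate first and then invoke the Keller-Liverani perturbation theorem. To bound $\int_W (\Lp_{T_{\bE},a}h - \Lp_{T_0,a}h) \psi \, dm_W$ for an arbitrary $W \in \cW^s$ and $\psi \in \C^\alpha(W)$ with $|\psi|_{\C^\alpha(W)} \le 1$, I unfold each term as a sum over the homogeneous connected components of $T_{\bE}^{-1}W$ and $T_0^{-1}W$ respectively; on each component $W_i^T$ the integrand becomes $h \cdot (J_{\mu_0}T)^{a-1} \cdot J_{W_i^T}T \cdot \psi \circ T$. Away from $N_\ve(\cS_{-1}^{\bE} \cup \cS_{-1}^0)$, conditions \textbf{(C1)}--\textbf{(C4)} permit a one-to-one matching of components $W_i^{\bE}$ with $W_j^0$ so that each matched pair lies in a common homogeneity strip (for $\ve$ small enough), their graphs differ in $\C^1$ by at most $C\sqrt{\ve}$ by \textbf{(C4)}, and their defining $r$-intervals coincide up to an $O(\ve)$ symmetric difference by \textbf{(C1)}. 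Consequently $d_{\cW^s}(W_i^{\bE}, W_j^0) \le C\sqrt{\ve}$.

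For each matched pair, the difference of integrals has the form $\int_{W_i^{\bE}} h \psi_{\bE} \, dm_{W_i^{\bE}} - \int_{W_j^0} h \psi_0 \, dm_{W_j^0}$, with $\psi_{\bE} = (J_{\mu_0}T_{\bE})^{a-1} J_{W_i^{\bE}}T_{\bE} \cdot \psi \circ T_{\bE}$ and $\psi_0$ defined analogously. Using Lemma~\ref{TFJ} (which gives $J_{\mu_0}T_{\bE} = 1+\ve H$ with $\C^{\alpha_1}$ norm uniformly bounded), the $\C^{1+\alpha_0}$ control from \textbf{(A3)}, and the distortion estimates \textbf{(C2)}--\textbf{(C3)}, I verify that $|\psi_{\bE}|_{\C^\alpha(W_i^{\bE})}$ and $|\psi_0|_{\C^\alpha(W_j^0)}$ are bounded independently of $\ve$, while $d_\beta(\psi_{\bE}, \psi_0) \le C\ve^{1/2}$ after reparametrizing over a common $r$-interval. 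Applying the strong unstable norm \eqref{eq:s-unstable} to the normalized difference then yields a bound of $C (\sqrt{\ve})^\gamma \|h\|_u \le C\ve^{\gamma/2}\|h\|_{\cB}$ per pair, and summing over the uniformly bounded number of matched pairs preserves this bound.

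The unmatched contribution consists of preimage components inside an $\ve$-neighborhood of the singularity sets, whose total arclength is controlled by $C\ve$ via a growth lemma of the type in Lemma~\ref{lem:growth}. On such a short component $W_i$, the strong stable norm \eqref{eq:s-stable} yields $|\int_{W_i} h\psi_i \, dm_{W_i}| \le \|h\|_s |W_i|^p |\psi_i|_{\C^\beta(W_i)} \le C \ve^p \|h\|_{\cB}$, and since $p > \gamma$ in the norm setup, $\ve^p$ is absorbed by $\ve^{\gamma/2}$. Combining both estimates and taking the supremum over $W$ and $\psi$ gives $|\Lp_{T_{\bE},a}h - \Lp_{T_0,a}h|_w \le C \ve^{\gamma/2} \|h\|_{\cB}$.

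The main obstacle is the geometric matching step: near $\cS_0$ the derivative $\|DT^{-1}\|$ blows up as $(\cos\vf)^{-1}$, so the preimage geometry of $T_{\bE}$ and $T_0$ can diverge sharply, and a careful argument in the spirit of \cite{DZ2} is required to ensure that, outside an $\ve$-measure bad set, the homogeneous preimage components can be matched pairwise within a common strip. Once the continuity estimate is in hand, the continuity of the leading eigenvalue and associated spectral projectors follows from the Keller-Liverani perturbation theorem \cite{keller liverani}, applied to the family $\{\Lp_{T_{\bE}, a}\}_{d_{\cF}(T_0, T_{\bE}) \le \ve}$: the uniform Lasota-Yorke inequality \eqref{eq:uniform LY} of Theorem~\ref{thm:uniform}, the spectral gap for $\Lp_{T_0, a}$ established in Lemma~\ref{lem:peripheral}, and the just-proved continuity of $\Lp_{T_{\bE}, a}$ as operators from $\cB$ to $\cB_w$ together imply continuity of the isolated spectrum separated from the essential spectral radius.
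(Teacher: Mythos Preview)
Your approach is essentially the same as the paper's: decompose $T_0^{-1}W$ and $T_{\bE}^{-1}W$ into matched pieces (controlled via the strong unstable norm and $d_{\cW^s}\le C\sqrt{\ve}$) and unmatched pieces (controlled via the strong stable norm), then invoke Keller--Liverani using the uniform Lasota--Yorke inequalities. The paper likewise refers the geometric matching construction to \cite[Section~5]{DZ2} and singles out the only genuinely new ingredient, namely the bound $|(J_{\mu_0}T_{\bE})^{a-1}-1|_{\C^\beta}\le C\ve^{1-3\beta}$, which you subsume into your estimate on $d_\beta(\psi_{\bE},\psi_0)$.

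One correction: the unmatched preimage components have length at most $C\ve^{1/2}$, not $C\ve$. The $\ve$-neighborhood of $\cS_{-1}$ in $W$ has arclength $O(\ve)$, but under $T^{-1}$ these pieces land near $\cS_0$ where the stable expansion blows up like $(\cos\vf)^{-1}$, so by \textbf{(H1)} (cf.\ \cite[Exercise~4.50]{chernov book}) a segment of length $\ve$ can stretch to length $C\ve^{1/2}$ in the preimage. Hence the strong stable norm gives $|W_i|^p\le C\ve^{p/2}$ rather than $\ve^p$; since $p\ge\gamma$ this still yields $\ve^{\gamma/2}$, so your conclusion survives, but the intermediate bound needs this adjustment. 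You correctly flag this blowup as the main obstacle.
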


\begin{proof}
The proof is essentially the same as the proof of \cite[Theorem 2.3]{demers zhang},
except with the added potential $(J_{\mu_0}T)^{a-1}$.  We just sketch the proof here, noting
the necessary additions.

Fixing $T_0$ and $T_{\bE}$ as in the statement
of the lemma, we choose $h \in \C^1(M)$ with $\| h \|_{\cB} \le 1$ and $W \in \cW^s$.  Let 
$\psi \in \C^\alpha(W)$ satisfy $|\psi|_{\C^\alpha(W)} \le 1$.  For the weak norm of the difference, 
we must estimate
\[
\int_W (\Lp_{T_0, a} h - \Lp_{T_{\bE}, a} h) \psi \, dm_W 
= \int_{T_0^{-1}W} h J_{T_0^{-1}W} T \, \psi \circ T_0
- \int_{T_{\bE}^{-1}W} h (J_{\mu_0}T_{\bE})^{a-1} J_{T_{\bE}^{-1}W} T \, \psi \circ T_{\bE} ,
\]
where we have used the fact that $J_{\mu_0}T_0 =1$.
The required estimate is similar to the estimate for the strong unstable norm in contained
in Section~\ref{unstable norm}, except that we have one stable curve iterated under two different maps
instead of two close stable curves iterated under the same map.  However, the decomposition
is the same, we subdivide $T_0^{-1}W$ and $T_{\bE}^{-1}W$ into matched and unmatched
pieces.  The matched pieces can be connected by a transverse foliation of unstable curves,
while the unmatched pieces are short.
The estimates proceed precisely as in 
\cite[Section 5]{DZ2}, with \cite[Lemma 5.1]{DZ2} providing the
bounds on all the relevant quantities.  The only additional piece in the present estimate
is the presence of the potential $(J_{\mu_0}T_{\bE})^{a-1}$.  

For the sum over unmatched unmatched pieces, it is bounded by 
$C \ve^{\gamma/2} |(J_{\mu_0}T_{\bE})^{a-1}|_{\C^\beta(\cP_1)} \| h\|_s$ using the strong stable norm precisely as in
\eqref{eq:first unstable}, with $n=1$, since each unmatched piece has length at most $\ve^{1/2}$.

Suppose $U_1$ and $U_2$ are two matched pieces of $T_0^{-1}W$ and $T_{\bE}^{-1}W$,
respectively.  By construction, they are defined over a common $r$-interval $I$, i.e. they
can be written as graphs of functions 
\[
U_j = G_{U_j}(I) = \{ (r, \vf_{U_j}(r)) : r \in I \}
\]
and $d_{\cW^s}(U_1, U_2) \le C \ve^{1/2}$ (\cite[Lemma 5.1(a)]{DZ2}).  
The estimate over matched pieces proceeds precisely as in \eqref{eq:stepone} and the only
difference in test functions unaccounted for in \eqref{eq:diff}
is $|(J_{\mu_0}T_{\bE})^{a-1} -1|_{\C^\beta(U_2)}$.
We will show that
\begin{equation}
\label{eq:a}
|(J_{\mu_0}T_{\bE})^{a-1} - 1|_{\C^\beta(U_2)} \le C \ve^{1-3\beta},
\end{equation}
for some uniform constant $C$ depending on $a_0$. 
Indeed $|(J_{\mu_0}T_{\bE})^{a-1} - 1|_{\C^0(U_2)} \le C|a-1|\ve$ follows from Lemma~\ref{TFJ}.
For the H\"older constant, we take $x, y \in U_2$ and estimate on the one hand using {\bf (H4)},
\[
|(J_{\mu_0}T_{\bE})^{a-1}(x) - (J_{\mu_0}T_{\bE})^{a-1}(y)|
\le C |(J_{\mu_0}T_{\bE})^{a-1}|_{\C^0(U_2)} d(x,y)^{1/3} \le C' d(x,y)^{1/3}.
\] 
While on the other hand,
\[
|(J_{\mu_0}T_{\bE})^{a-1}(x) - (J_{\mu_0}T_{\bE})^{a-1}(y)| \le C'' \ve,
\] 
using Lemma~\ref{TFJ} once again.  So the H\"older constant is bounded by the minimum
of these two expressions,
\[
 \min \{ C'  d(x,y)^{1/3 - \beta}, C'' \ve d(x,y)^{-\beta}  \}.
\]
This bound can be no worse than when the two quantities are equal, i.e. $\ve = (C'/C'') d(x,y)^{1/3}$.
Thus $H^\beta_W((J_{\mu_0}T_{\bE})^{a-1} - 1) \le C \ve^{1-3\beta}$.
This proves \eqref{eq:a}.

Now gathering terms over matched and unmatched pieces as in \eqref{eq:final unstable} 
or \cite[Eq. (5.9)]{DZ2}, we see that the
least power of $\ve$ is $\ve^{\gamma/2}$, from the unmatched pieces 
(notice that $\ve^{1-3\beta} < \ve^{\frac 13 - \beta}$ and 
$\gamma \le \alpha - \beta \le \frac 13 - \beta$).  This completes the proof of the lemma.
\end{proof}

Fix $T \in \cF(\ve_1, \tau_*, C_0)$.
Let $h_{a}\in \B$ be the eigenvector of $\Lp_{T,a}$ corresponding to the eigenvalue
$\lambda_a$ of maximum modulus, and  $\nu_a\in \B^*$
be the corresponding eigenvector of the dual $\Lp_{T,a}^\ast$.
That is, $\Lp_{T,a} h_{a}=\lambda_a h_{a}$, and $\Lp_a^*\nu_a=\lambda_a\nu_a$.
Due to the spectral gap for $\Lp_{T,a}$, we have the following spectral decomposition,
\begin{equation}
\label{eq:decomp}
\Lp_{T,a}^n h = \lambda_a^n \Pi_a h + R_a^n h,
\end{equation}
where $\Pi_a R _a = R_a \Pi_a = 0$ and the spectral radius of $R_a$ is strictly smaller than
$\lambda_a$.  Also, for any $h \in \B$, $\Pi_a h = c_a(h) h_a$, where $c_a : \B \to \mathbb{R}$
is a bounded linear functional.  Notice that $\lambda_a$ must be real since 
$\Lp_{T,a}$ is a real operator and the spectral gap for $\Lp_{T,a}$ is obtained as a 
perturbation of $\Lp_{T,0}$, which has $\lambda_0 = 1$.

The following lemma completes the proof of Theorem~\ref{thm:uniform}.

\begin{lemma}
Both eigenvectors $h_a$ and $\nu_a$ are positive measures.
Moreover, the pairing
$\mu_a:=h_a\otimes\nu_a$ defines an invariant measure for $T$.
\end{lemma}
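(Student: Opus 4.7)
My plan rests on two facts: $\Lp_{T,a}$ preserves positivity, and the spectral decomposition \eqref{eq:decomp} supplied by Lemma~\ref{lem:peripheral}. First I would establish positivity of $h_a$ and $\nu_a$ by Perron-Frobenius-type arguments, and then deduce $T$-invariance of $\mu_a$ from the two eigenvalue equations via a direct computation with \eqref{eq:gen trans}.

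For the positivity of $h_a$, observe that the weight $e^{-as}=(J_{\mu_0}T)^{a}$ is strictly positive, so the defining formula $\Lp_{T,a}h(\phi)=h(e^{-as}\phi\circ T)$ shows that $\Lp_{T,a}$ maps the closed cone of positive Borel measures in $\cB$ into itself. Apply the spectral decomposition to $\mathbf{1}\in\cC^1(M)\subset\cB$, which represents the positive smooth measure $\mu_0$: since $\lambda_a^{-n}\Lp_{T,a}^n\mathbf{1}=\Pi_a\mathbf{1}+\lambda_a^{-n}R_a^n\mathbf{1}\to\Pi_a\mathbf{1}$ in $\cB$-norm and each $\Lp_{T,a}^n\mathbf{1}$ is a positive Borel measure, the limit $\Pi_a\mathbf{1}=c_a(\mathbf{1})h_a$ is a positive measure. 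The lower bound $\|\Lp_{T,a}^n\mathbf{1}\|_s\ge(1-\sign(a-1)C_H\ve_0)^{(a-1)n}\|\mathbf{1}\|_s$ extracted from the proof of Lemma~\ref{lemma:quasi}, combined with the choice \eqref{eq:eps_0} of $\ve_0$ (which forces this lower rate to exceed the decay rate of $R_a^n$), rules out $c_a(\mathbf{1})=0$. After renormalization, $h_a$ is therefore a positive Borel measure.

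The positivity of $\nu_a$ follows by the dual argument. The adjoint $\Lp_{T,a}^\ast$ preserves the dual positive cone $\{\nu\in\cB^\ast:\nu(h)\ge 0\text{ whenever }h\in\cB\text{ is a positive measure}\}$, since $\Lp_{T,a}^\ast\nu(h)=\nu(\Lp_{T,a}h)$. The evaluation functional $\nu_{\mathrm{ref}}(h):=h(\mathbf{1})$ lies in $\cB^\ast$ by Lemma~\ref{lem:distr}(i) and is manifestly positive. Its dual spectral projection $\Pi_a^\ast\nu_{\mathrm{ref}}$ is a limit (in $\cB^\ast$-norm) of positive functionals, hence itself positive. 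Since $\Pi_a^\ast\nu_{\mathrm{ref}}(h_a)=\nu_{\mathrm{ref}}(\Pi_a h_a)=h_a(\mathbf{1})>0$ and $\Pi_a^\ast\nu_{\mathrm{ref}}$ is a scalar multiple of $\nu_a$ (the $\lambda_a$-eigenspace of $\Lp_{T,a}^\ast$ is one-dimensional), that scalar is nonzero; after renormalization, $\nu_a$ is a positive continuous linear functional on $\cB$. Restriction to $\cC(M)$ and the Riesz representation theorem identify it with a positive Borel measure on $M$.

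For the $T$-invariance of $\mu_a$, normalize so that $\nu_a(h_a)=1$ and define $\mu_a(\psi):=\nu_a(\psi h_a)$ for $\psi\in\cC^q(M)$ with $q>\gamma/(1-\gamma)$; the product $\psi h_a$ lies in $\cB$ by Lemma~\ref{lem:multiplier}. The core identity is
\[
\Lp_{T,a}\bigl((\psi\circ T)h_a\bigr)=\lambda_a\,\psi h_a\qquad\text{in }\cB,
\]
which follows directly from \eqref{eq:gen trans}: for any admissible test $\phi$,
\[
\Lp_{T,a}\bigl((\psi\circ T)h_a\bigr)(\phi)=h_a\bigl((\psi\circ T)\,e^{-as}\,\phi\circ T\bigr)=h_a\bigl(e^{-as}(\psi\phi)\circ T\bigr)=\lambda_a h_a(\psi\phi).
\]
Applying $\nu_a$ and using $\nu_a\Lp_{T,a}=\lambda_a\nu_a$ gives $\lambda_a\nu_a((\psi\circ T)h_a)=\lambda_a\nu_a(\psi h_a)$; dividing by $\lambda_a>0$ yields $\mu_a(\psi\circ T)=\mu_a(\psi)$. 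The main technical obstacle is verifying $(\psi\circ T)h_a\in\cB$, which requires $\psi\circ T$ to fulfill the hypotheses of Lemma~\ref{lem:multiplier} on the partition $\cP_1$ of $M\setminus\cS_1^T$; this is established by an estimate of the form \eqref{eq:holder} as in the proof of Lemma~\ref{lem:analytic}.
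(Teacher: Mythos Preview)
Your approach is essentially the paper's: positivity via the spectral decomposition applied to $\mathbf{1}$ (and its dual), invariance via the identity $\Lp_{T,a}((\psi\circ T)h_a)=\psi\,\Lp_{T,a}h_a$ together with Lemma~\ref{lem:multiplier}. Two points deserve tightening.

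First, your argument that $c_a(\mathbf{1})\neq 0$ has a gap. The lower bound $\|\Lp_{T,a}^n\mathbf{1}\|_s\ge (1-\sign(a-1)C_H\ve_0)^{(a-1)n}\|\mathbf{1}\|_s$ from Lemma~\ref{lemma:quasi}, combined with \eqref{eq:eps_0}, shows only that this lower rate exceeds the \emph{essential} spectral radius of $\Lp_{T,a}$, not the spectral radius of $R_a$; Lemma~\ref{lem:peripheral} gives $\rho(R_a)<\lambda_a$ but says nothing about where $\rho(R_a)$ sits relative to $(1-\sign(a-1)C_H\ve_0)^{a-1}$. The paper closes this differently and more directly: since $c_0(\mathbf{1})=1$ and $a\mapsto c_a(\mathbf{1})$ is continuous by Lemma~\ref{lem:analytic} (via \eqref{eq:close}), one can shrink $\ve_1$ so that $c_a(\mathbf{1})>0$ for all $a\in[-a_0,1+a_0]$.

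Second, your proposed verification that $(\psi\circ T)h_a\in\cB$ ``by an estimate of the form \eqref{eq:holder}'' does not transfer. That estimate in Lemma~\ref{lem:analytic} concerns $g\circ T^{-1}$ on the partition $\cP_{-1}$, whose boundary $\cS_{-1}^T$ is \emph{transverse} to the stable cone---precisely what Lemma~\ref{lem:multiplier} requires. For $\psi\circ T$ the relevant discontinuity set is $\cS_1^T$, whose curves lie in the stable cone, so the transversality hypothesis of Lemma~\ref{lem:multiplier} is not available in the same way, and the $|T^{-1}W|\le C|W|^{1/2}$ mechanism behind \eqref{eq:holder} does not apply. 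The paper simply invokes Lemma~\ref{lem:multiplier} at this step; you should not claim more justification than that.
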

\begin{proof}
Due to \eqref{eq:decomp}, for any $\psi\in \C^\alpha(M)$,
$$
|c_a(1) h_a(\psi)| = \lim_{n \to \infty} |\lambda_{a}^{-n} \Lp^n_{T,a} 1 (\psi) |
\le \lim_{n \to \infty} |\psi|_\infty |\lambda_a^{-n} \Lp^n_{T,a} 1 (1) | = |\psi|_\infty |c_a(1)| |h_a(1)| .
$$
Now $c_0(1) = 1$ and $c_a(1)$ is continuous in $a$ by Lemma~\ref{lem:analytic},
so by \eqref{eq:close}, $c_a(1) >0$ for $\ve \in [0, \ve_1]$. 
This, together with the above estimate, implies that
$h_{a}$ is a measure. Then it is evident that $h_a$ is a positive measure due to the
positivity of $\Lp_{T,a}$.

Similarly, one can show that $\nu_a$ is also
a positive measure since 
$$\lim_{n \to \infty} \lambda_{a}^{-n} (\Lp_{T,a}^*)^n 1
(\psi) = c_a^*(1) \nu_{a}(\psi),$$
for some linear functional $c_a^*$.

By Lemma~\ref{lem:multiplier}, if $\psi$ is a piecewise H\"older continuous function on $M$,
then $\psi h_a \in \B$.  So we may define a measure on $M$ via
the pairing $\mu_{T,a}:=h_a\otimes \nu_a$, i.e.
$\mu_{T,a}(\psi) = \langle \psi h_a, \nu_a \rangle$, where $\langle \cdot, \cdot \rangle$
denotes the pairing between $\B$ and its dual. 
Moreover, the measure $\mu_{T,a}$  is invariant under $T$:
\begin{align*}
\mu_{T,a}(\psi\circ T)&=  \langle \psi \circ T \cdot h_a , \lambda_a^{-1} \Lp_{T, a}^*\nu_a \rangle
= \lambda_a^{-1} \langle \Lp_{T,a}(\psi\circ T \cdot h_a) , \nu_a \rangle \\
&=\lambda_a^{-1}\langle \psi \Lp_{T,a}(h_a) , \nu_a \rangle = \langle \psi h_a, \nu_a \rangle
=\mu_{T,a}(\psi), \qquad \mbox{for any $\psi \in C^p(M)$},
\end{align*}
where we have used Lemma~\ref{lem:multiplier} to conclude that $\psi \circ T \cdot h_a \in \B$.
\end{proof}

\begin{remark}
\label{rem:positive}
Notice that when $a=0$, the smooth measure 
$\mu_0$ is the conformal measure with respect to $\Lp_{T,0}$, i.e. $\Lp_{T,0}^* \mu_0 = \mu_0$,
so that $\nu_0 = \mu_0$ and
$\langle h_0, \mu_0 \rangle = 1$.  It then follows from Lemma~\ref{lem:analytic}
and \eqref{eq:close} that
that we may choose $\ve_1>0$ sufficently small so that
$\langle h_a, \mu_0 \rangle >0$ and $c_a(h_0)>0$ for all $a \in [-a_0, 1+a_0]$.
\end{remark}

%%%%%%%%%%%%%%%%%%%%%%%%%%%%%%%%%%%%%%%%%%%%%%%%%

%%%%%%%%%%%%%%%%%%%%%%%%%%%%%%%%%%%%%%%%%%

\section{Proof of Theorem~\ref{thm:moment}}
\label{moment proof}

In this section, we shall be more explicit about the dependence of the various objects
on the forces $\bE = (\bF, \bG)$.
We shall use the following notation for the map $T = T_{\bE}$ and the potential $e^{a g_0}$.
We have the following decomposition according to \eqref{eq:decomp}:
\[
\Lp_{T_{\bE}, a} = \lambda_{\bE, a} \Pi_{\bE, a} + R_{\bE, a} .
\]
Denote by $\mu_{\bE, a} =h_{\bE, a} \otimes \nu_{\bE, a}$  the $T_{\bE}$-invariant 
measure constructed using the left and right eigenvectors of $\Lp_{T_{\bE}, a}$.  When $a = 0$, 
in what follows, we will drop the subscript corresponding to $a$, and simply write
$\mu_{\bE} = h_{\bE} \otimes \nu_{\bE}$ for
the SRB measure of the perturbed system 
$T_\bE \in \F(\ve_1, \tau_*, C_0)$. 
 Note this notation is consistent with our
use of $\mu_0 = \mu_{\mathbf{0}, 0}$ as both the conformal measure for $\Lp_{T,0}$ as well as
the smooth invariant measure 
corresponding to the classical billiard map $T_{\mathbf{0}, \mathbf{0}}$, with
$\bF = \bG = \mathbf{0}$.  Indeed, when $\bE = \mathbf{(0,0)}$, then $h_{\mathbf{0}, 0} = 1$.

The moment generating function $e_{\bE}(a)$ is defined as in \eqref{eq:moment},
\begin{align*}
e_{\bE}(a)&=\lim_{n\to\infty}\frac{1}{n} \log \mu_{\bE} ((J_{\mu_0}T_\bE^n)^{a}).
\end{align*}

%{\bf MD: aligned proof with new statements of Thm~\ref{thm:moment}}

\begin{proof}[Proof of Theorem~\ref{thm:moment}]
The existence and uniqueness of $\mu_{\bE}$ for Item (1) follow from the spectral gap
of $\Lp_{T_{\bE}}$ established by Theorem~\ref{thm:uniform}.

To prove item (2), first recall that $\langle h_{\bE, a}, \mu_0 \rangle > 0$ and $c_{\bE, a}(h_{\bE, 0}) > 0$ 
by choice of $\ve_1$ and Remark~\ref{rem:positive}.  Then
\begin{align*}
e_\bE (a)&=\lim_{n\to\infty}\frac{1}{n} \log \mu_\bE ((J_{\mu_0}T_\bE^n)^{a})
=\lim_{n\to\infty}\frac{1}{n} \log \langle h_{\bE} \cdot (J_{\mu_0}T_\bE^n)^{a}, \mu_0 \rangle
=\lim_{n\to\infty}\frac{1}{n}\log \langle \Lp_{T_\bE, a}^n h_{\bE} , \mu_0 \rangle \\
&=\lim_{n\to\infty}\frac{1}{n}\log \langle \lambda_{\bE, a}^{n}c_{\bE, a}(h_{\bE})
h_{\bE, a} + R^n_{\bE, a} h_{\bE}, \mu_0 \rangle= \log \lambda_{\bE, a}.
\end{align*}
Thus by Lemma~\ref{lem:analytic}, since $\lambda_{\bE, a}$ is simple, $e_{\bE}(a)$
is analytic as a function of $a$ for $a \in [-a_0, 1+a_0]$.

Now let  $\nu \in \B$ be a probability measure with $c_{\bE, a}(\nu)>0$.  Then the 
limit 
\[
\lim_{n\to\infty}\frac{1}{n} \log \nu ((J_{\mu_0}T_\bE^n)^{a})
\]
exists and has the value $\log \lambda_{\bE, a}$ by precisely the same calculation as above.  Thus
the moment generating function can be defined using $\nu$ in place of the invariant
measure $\mu_{\bE}$.  Note that since $c_{\bE, 0}(\nu) = 1$ for any probability measure $\nu \in \B$,
and due to the inequality
\[
\| (\Pi_{\bE, 0} - \Pi_{\bE, a}) \nu \|_{\B} \le \| \Pi_{\bE, 0} - \Pi_{\bE, a} \|_{\B} \| \nu \|_{\B}, 
\]
Lemma~\ref{lem:analytic} implies that if we fix a ball of radius $r>0$ in $\B$, then
we may choose $\ve_1$ so that $c_{\bE, a}(\nu) >0$ for all $\nu$ in this ball of radius $r$,
all $a \in [-a_0, 1+a_0]$ and all
$T_{\bE} \in \F(\ve_1, \tau_*, C_0)$.  For this range of parameters, it follows that Lebesgue measure $m$,
the smooth measure $\mu_0$ and the (possibly singular) SRB measure $\mu_{\bE}$
all yield the same logarithmic moment generating function $e_{\bE}(a)$.
From this and Proposition~\ref{ess} we conclude the symmetry $e_{\bE}(a) = e_{\bE}(1-a)$
for $a \in [-a_0, 1+a_0]$.

To prove item (3), we compute the derivatives of $e_\bE(a)$ at $a=0$,
following \cite{rey} (see also \cite{demers}).
The sequence
$\{\frac{1}{n} \log \mu_\bE((J_{\mu_0}T_\bE^n)^a)\}_{n \in \N}$
is uniformly bounded for $a$ in a complex neighborhood of the origin. Thus
by the Vitali
convergence theorem we can freely exchange derivative and limits. Thus
$$
e'_\bE(0)=\lim_{n\to\infty}\frac{1}{n} \mu_\bE (\log J_{\mu_0 }T_\bE^n)
=\mu_\bE(\log J_{\mu_0}T_\bE),
$$
due to the invariance of $\mu_{\bE}$ with respect to $T_{\bE}$. 
 Now using Lemma~\ref{TFJ}, we have $J_{\mu_0} T_\bE(x)=1+\eps H(x)$.
 Thus for small $|\eps|<1$,
 $$
 e'_\bE(0)=\mu_\bE(\log
(1+\eps H))=\eps \mu_\bE(H)+\cO(\eps^2).
$$
Next, using Lemma~\ref{lem:cont ve} and \cite[Corollary 1]{keller liverani}, 
we have 
\begin{equation}
\label{eq:meas close}
| \mu_{\bE} - \mu_0|_w \le C \ve^\eta,
\end{equation}
for some $\eta > 0$.  Putting these estimates together, we conclude,
\[
e'_{\bE}(0) = \ve \mu_0(H) + o(\ve).
\]

 For the second derivative, setting $s = -\log J_{\mu_0}T_\bE$ as before, and
$\bar s = s - \mu_\bE(s)$, we have
\begin{align*}
e''_\bE(0)&=\lim_{n\to\infty}\frac{1}{n}(\mu_\bE((S_n
s)^2)-\mu_\bE(S_ns)^2)=\lim_{n\to\infty}\frac{1}{n}\mu_\bE((S_n \bar s)^2)\\
&=\mu_\bE(\bar s^2)+2\lim_{n\to\infty}\sum_{j=1}^{n-1} (1-j/n) \mu_\bE(\bar s\cdot
\bar s\circ T_\bE^j)\\
&=\mu_\bE(\bar s^2)+2 \sum_{j=1}^{\infty}\mu_\bE(\bar s\cdot \bar s\circ T_\bE^j) .
\end{align*}
The last equality follows from the exponential decay of correlations and
dominated convergence.

Let $\sigma_{\bE}^2$ denote the limit of the variance of $n^{-1/2} S_ns$ as
$n \to \infty$ where $\{s \circ T_\bE^j\}_{j\in \mathbb{N}}$ is distributed
according to the invariant measure
$\mu_\bE$.
(Such a $\sigma_{\bE}$ exists and is finite
whenever the auto-correlations 
$\mu_\bE(\bar s\cdot \bar s\circ T_\bE^j)$ are summable). The above Green Kubo formula then
gives the diffusion coefficient:
\begin{equation}
\label{eq:GK}
e''_\bE(0)=\mu_\bE(\bar s^2)+2 \sum_{j=1}^{\infty}\mu_\bE(\bar s\cdot \bar s\circ T_{\bE}^j)
=\sigma_{\bE}^2 .
\end{equation}
We denote $\bar H=H-\mu_\bE(H)$, where $H$ is from
Lemma~\ref{TFJ} and $J_{\mu_0}T_\bE=1+\eps H$.  Then,
$$- \bar s=\log J_{\mu_0} T_{\bE}-\mu_\bE(\log J_{\mu_0} T_{\bE})=\eps (H-\mu_\bE(H))+\cO(\eps^2),$$
and also,
\begin{align*}
\mu_\bE(\bar s^2)&=\mu_\bE((\log(1+\eps H))^2)-\mu_\bE(\log(1+\eps H))^2\\
&=\eps^2\text{Var}(H)+\cO(\eps^3) = \eps^2 \mu_{\bE}(\bar H^2) + \cO(\eps^3).
\end{align*}
It follows that
\begin{align*}
\sum_{j=1}^{\infty}\mu_\bE(\bar s\cdot \bar s\circ T_\bE^j)
&=\sum_{j=1}^{\infty}\mu_\bE(\bar s\cdot \bar s\circ T_{\mathbf{0}}^j)+\sum_{j=1}^{\infty}\left(\mu_\bE(\bar s\cdot \bar s\circ T_\bE^j)-\mu_\bE(\bar s\cdot \bar s\circ T_{\mathbf{0}}^j)\right)\\
&=\eps^2 \sum_{j=1}^{\infty}\mu_\bE(\bar H\cdot \bar H\circ T_{\mathbf{0}}^j)+o(\eps^2).
\end{align*}
By exponential decay of correlations, the series in the last expression converges. 
Finally, we use \eqref{eq:meas close} to change the measure from $\mu_{\bE}$ to $\mu_0$
since all the functions involved are admissible with respect to the norms we have defined.
Thus 
$$e''_\bE(0)=\eps^2\sigma_H^2+o(\eps^2),$$
where $\sigma_H^2=\mu_0(\bar H^2)+2 \sum_{j=1}^{\infty}\mu_0(\bar H\cdot \bar H\circ T_{\mathbf{0}}^j)$.

Next, we show that in fact  $e_\bE(a)$ is strictly convex for $a \in [-a_0, 1+a_0]$ whenever
$\sigma_{\bE}^2>0$.

In order to compute $e'_\bE(a)$ and $e''_\bE(a)$ at $a\neq 0$, let
$$e_a(t):=\lim_{n\to\infty}\frac{1}{n}\log \mu_{\bE, a}(e^{-t S_n s}),$$ 
i.e. $e_a$
is the moment generating function for $\mu_{\bE,a}$. Note that
\begin{align*}
\mu_{\bE, a}(e^{-tS_n s})&=\langle e^{-tS_n s} \cdot h_{\bE, a} , \nu_{\bE, a} \rangle
=\lambda_{\bE, a}^{-n}\langle e^{-tS_n s} \Lp_a^n h_{\bE, a}, \nu_{\bE, a} \rangle 
=\lambda_{\bE, a}^{-n}\langle \Lp_{t+a}^nh_{\bE, a}, \nu_{\bE, a} \rangle.
\end{align*}
Therefore,
\begin{align*}
e_a(t)&=\lim_{n\to\infty}\frac{1}{n}\log \lambda_{\bE, a}^{-n}\langle \Lp^n_{t+a}h_{\bE, a}, \nu_{\bE, a} \rangle
=\lim_{n\to\infty}\frac{1}{n}\log \langle \Lp^n_{t+a}h_{\bE, a}, \nu_{\bE, a} \rangle - \log \lambda_{\bE,a} \\
&=\lim_{n\to\infty}\frac{1}{n}\log \langle \lambda_{\bE, a+t}^{n}c_{\bE, a+t}(h_{\bE, a})
h_{\bE, a+t}+R^n_{\bE, a+t} (h_{\bE, a}), \nu_{\bE, a} \rangle- \log \lambda_{\bE, a}
=e_\bE(a+t)-e_\bE(a),
\end{align*}
where we have used the fact that $e_\bE(a)=\log \lambda_{\bE, a}$. 
%This implies that$e_a(t)$ is the translated pressure. 
Differentiating with respect to $t$
gives $e'_\bE(a)=e'_a(0)$ and $e''_\bE(a)=e''_a(0)$. The computation of $e'_a(0)$
and $e_a''(0)$ are the same as the case $e'_\bE(0)$ and $e''_\bE(0)$, with $\mu_{\bE, a}$
in place of $\mu_{\bE}$.  

Notice that $\Lp_{T_{\mathbf{0}}, a} = \Lp_{T_{\mathbf{0}}, 0}$ for each $a \in \mathbb{R}$ since
when $\bE = (0,0)$, $s = 0$.  It follows that $\mu_{\mathbf{0}, a} = \mu_0$ for all $a \in \mathbb{R}$.  Thus by the continuity of $\mu_{\bE, a}$ in $\bE$ for each fixed $a$  (Lemma~\ref{lem:cont ve}),
we have $e''_a(0) >0$ for all $a \in [-a_0, 1+a_0]$ and $\ve < \ve_1$ if $\ve_1$ is chosen
sufficiently small.

The positivity of the entropy production rate follows then from the symmetry and 
strict convexity.  Indeed, suppose the entropy production rate $-e_{\bf E}'(0)=0$.  
Then since 
$e_{\bf E}(0)= e_{\bf E}(1)=0$ by the symmetry proved in item (2), convexity and analyticity imply that 
$e_{\bf E}(a) =0$ for all $a \in [0,1]$.  This
contradicts strict convexity, i.e. it contradicts that $e_{\bE}''(a)>0$ for all $a \in [0,1]$. 

It remains to prove that $\sigma_{\bE}^2 > 0$ if and only if $s$ is not a coboundary.
If $s = \psi \circ T_\bE- \psi +C$ for some $\psi$ then $e_\bE(a)=aC$ and so trivially
$e_{\bE}''(a) = 0$ for all $a$.  The converse requires a more substantial proof.  In order to prove it,
we will invoke the following abstract version of the Central Limit Theorem for invertible systems,
following the classical martingale approach of Gordin~\cite{gordin}.

\begin{theorem}[\cite{viana}]
\label{thm:CLT}
Let $(X, \cA, \mu)$ be a probability space, $\phi \in L^2(\mu)$ be such that $\int \phi \, d\mu = 0$,
and $\theta : X \to X$ be an invertible map such that both $\theta$ and $\theta^{-1}$ are measurable,
and $\mu$ is $\theta$-invariant and ergodic.  Let $\cA_0 \subset \cA$ be such that
$\cA_n = \theta^{-n}(\cA_0)$, $n \in \mathbb{Z}$, is a non-increasing sequence of $\sigma$-algebras.
Assume that 
\begin{equation}
\label{eq:conditions}
\sum_{n=0}^\infty \| E(\phi | \cA_n) \|_{L^2(\mu)} < \infty \quad \mbox{and} \quad
\sum_{n=0}^\infty \| \phi - E(\phi | \cA_{-n} ) \|_{L^2(\mu)} < \infty,
\end{equation}
and let $\sigma^2 = \int \phi^2 \, d\mu + 2 \sum_{j=1}^\infty \int \phi \cdot \phi \circ \theta^j \, d\mu$.

Then $\sigma$ is finite, and $\sigma = 0$ if and only if $\phi = u \circ \theta - u$ for some
$u \in L^2(\mu)$.  Moreover, if $\sigma^2 > 0$, then $n^{-1/2} S_n \phi$ converges in
probability to $\cN(0, \sigma^2)$. 
\end{theorem}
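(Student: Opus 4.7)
The plan is to prove this CLT by Gordin's martingale approximation method. I would decompose $\phi$ in the form $\phi = m + u \circ \theta - u$ with $u \in L^2(\mu)$ and $m \in L^2(\mu)$ a reverse martingale difference for the decreasing filtration $(\cA_n)$, i.e.\ $E(m \mid \cA_1) = 0$. The two summability hypotheses in \eqref{eq:conditions} are tailored exactly for this: they allow one to define
\[
u := \sum_{n=1}^{\infty} E(\phi \mid \cA_n) \;-\; \sum_{n=0}^{\infty} \bigl( \phi \circ \theta^{-n} - E(\phi \circ \theta^{-n} \mid \cA_0) \bigr)
\]
as an $L^2$-convergent series (the first sum via the first condition, the second via the second, after using the $\theta$-invariance of $\mu$). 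With $m := \phi - u \circ \theta + u$, the martingale property $E(m \mid \cA_1) = 0$ would follow from a direct computation using $E(f \circ \theta^k \mid \cA_n) = E(f \mid \cA_{n-k}) \circ \theta^k$ together with the telescoping structure built into $u$.

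Given the decomposition, the CLT portion would follow in two steps. First, I would apply Billingsley's CLT for stationary ergodic reverse martingale differences to $\{m \circ \theta^j\}_{j \ge 0}$, obtaining $n^{-1/2} \sum_{j=0}^{n-1} m \circ \theta^j \Rightarrow \mathcal{N}(0, \|m\|_{L^2}^2)$. Second, the coboundary contribution telescopes: $\sum_{j=0}^{n-1}(u \circ \theta^{j+1} - u \circ \theta^j) = u \circ \theta^n - u$ has $L^2$-norm at most $2\|u\|_{L^2}$, so after dividing by $\sqrt{n}$ it vanishes in $L^2(\mu)$, hence in probability. Slutsky's theorem then yields $n^{-1/2} S_n \phi \Rightarrow \mathcal{N}(0, \sigma^2)$ with $\sigma^2 = \|m\|_{L^2}^2$.

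To identify $\|m\|_{L^2}^2$ with the Green--Kubo expression $\int \phi^2 \, d\mu + 2\sum_{j=1}^\infty \int \phi \cdot \phi \circ \theta^j \, d\mu$, I would expand $n^{-1} E[(S_n \phi)^2]$ by stationarity and pass to the Cesaro limit; the summability conditions in \eqref{eq:conditions} imply sufficient decay of the autocovariances, so the series converges absolutely and the limit equals the stated expression. Comparing with $\|m\|_{L^2}^2$, obtained from the same expansion applied to $S_n m$ together with the martingale orthogonality between $m$ and the telescoping coboundary, shows that the two expressions agree, so $\sigma$ is in particular finite.

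The coboundary characterization is then immediate from the decomposition: $\sigma = 0$ forces $\|m\|_{L^2} = 0$, whence $\phi = u \circ \theta - u$ with the explicit $u$ above; the converse is trivial since a coboundary $v \circ \theta - v$ with $v \in L^2$ has $S_n\phi = v \circ \theta^n - v$, which is bounded in $L^2$. I expect the main obstacle to be the rigorous verification that $m$ genuinely satisfies $E(m \mid \cA_1) = 0$ --- this is the technical heart of Gordin's construction, where both summability hypotheses enter in a delicate, interlocking way, and where one must carefully track the interaction between the nested filtration $(\cA_n)$ and the invertible dynamics $\theta$.
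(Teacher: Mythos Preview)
The paper does not prove this theorem at all: it is quoted verbatim as a black box from \cite{viana} and then applied. Your proposal via Gordin's martingale decomposition is exactly the classical route by which such statements are established (and indeed the paper credits Gordin \cite{gordin} for the method), so there is nothing to compare against here.
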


We will apply this theorem with $\theta = T_{\bE}$, $\mu = \mu_{\bE}$, $\phi = \bar s$
and $\sigma = \sigma_{\bE}$.  Let $\cA_0$ be the sigma-algebra generated by the
($\mu_{\bE}$-mod 0) partition of $M$ into 
maximal homogeneous local stable manifolds for $T_{\bE}$.\footnote{This partition is measurable
since it is has a countable generator:  $\cup_{n \ge 1} \{ \mbox{connected components of 
$M \setminus \cS_{n}^{T_{\bE}, \mathbb{H}}$}\}$.  See for example, \cite[Section 5.1]{chernov book}.}
Then $\cA_n = T_{\bE}^{-n}(\cA_0)$ is a decreasing sequence of sigma-algebras, as required.

With these definitions, the second condition in \eqref{eq:conditions} is a simple consequence of
the uniform contraction of stable manifolds.  Denoting by $V^s(x)$ the maximal local stable manifold
of $x$, we note that $E(\bar g| \cA_{-n})$ is constant on curves of the form $T_{\bE}^n(V^s(x))$;
these are the elements of $\cA_{-n}$ and their length is bounded by $C \Lambda^{-n}$ for
some uniform $C>0$
since stable manifolds have length
uniformly bounded above due to the discontinuities of $T_{\bE}$.
In fact, since $\bar s$ is continuous on such curves,\footnote{Indeed, $\bar s$ is H\"older continuous on each connected component of $M \setminus \cS_1^{T_{\bE}}$, and local stable manifolds cannot cross
$\cS_1^{T_{\bE}}$, otherwise they would be cut in forward time under $T_{\bE}$, which 
would contradict
the definition of stable manifold.}
$E(\bar s| \cA_{-n})(x) = \bar s(y)$ 
for some $y \in \cA_{-n}(x)$, the element of $\cA_{-n}$ containing $x$.  Thus
\[
\begin{split}
|\bar s(x) - E(\bar s | \cA_{-n})(x)| & = |\bar s(x) - \bar s(y)| = | \log (1+ \ve H(x)) - \log (1 + \ve H(y))| \\
& \le \frac{\ve}{1-C_H \ve} C_s^{\alpha_1}(H) d(x,y)^{\alpha_1}
\le C' \Lambda^{-n \alpha_1},
\end{split}
\]
where we have used Lemma~\ref{TFJ} and $C_s^{\alpha_1}( \cdot )$ denotes the H\"older constant
along stable manifolds with exponent $\alpha_1$.  This estimate
implies that the $L^\infty$-norm, and therefore the $L^2$-norm, of $\bar s(x) - E(\bar s | \cA_{-n})$ decays at an exponential rate
and so the second sum in \eqref{eq:conditions} converges.

The first sum in \eqref{eq:conditions} entails a more subtle calculation.  In principle, it follows
from exponential decay of correlations for $\mu_{\bE}$;  however, it requires exponential decay
of correlations against observables in $L^2(\cA_0, \mu_{\bE})$, which is a larger class than
is at first available in the framework of our Banach spaces; here, $L^2(\cA_0, \mu_{\bE})$
is the set of $L^2$ functions that are measurable with respect to $\cA_0$.  
To see this, we will use the dual version
of the $L^2$-norm,
\begin{equation}
\label{eq:dual}
\begin{split}
\| E( \bar s | \cA_n ) \|_{L^2(\mu_{\bE})} & = \sup \left\{ \int \bar s \, \phi \, d\mu_{\bE} : \phi \in L^2(\cA_n, \mu_{\bE}) \mbox{ with } \| \phi \|_{L^2(\mu_{\bE})} = 1 \right\} \\
& = \sup \left\{ \int \bar s \, \psi \circ T_{\bE}^n \, d\mu_{\bE} : \psi \in L^2(\cA_0, \mu_{\bE}) \mbox{ with }
\| \psi \|_{L^2(\mu_{\bE})} = 1 \right\} .
\end{split}
\end{equation}
In order for this last integral to decay exponentially in $n$ as a result of the spectral gap
for $\Lp_{T_{\bE}} = \Lp_{T_{\bE}, 0}$, we would like $\bar s \in \cB$ and
$\psi \in \cB'$, the dual to $\cB$.  Unfortunately, the first statement is false and the second statement
needs some work to justify.   Also, note that we can expect the correlations to decay to 0 in the
above expression since
$\mu_{\bE}(\bar s) = 0$.  It is not necessary that $\mu_{\bE}(\psi )= 0$ as well.

As noted in the proof of Lemma~\ref{lem:analytic}, as an admissible potential
$\bar s$ is H\"older continuous on connected components of $M \setminus \cS_1^{T_{\bE}}$
and so does not satisfy the assumptions of Lemma~\ref{lem:multiplier};  
however, $\bar s \circ T_{\bE}^{-1}$ is $\alpha_1/2$-H\"older continuous on connected 
components of $M \setminus \cS_{-1}^{T_{\bE}}$ by \eqref{eq:holder}.
Thus by Lemma~\ref{lem:multiplier}, since $\gamma < 1/7 \le \alpha_1/(\alpha_1 + 2)$ in the
definition of the norms, both $\bar s \circ T_{\bE}^{-1}$ and $\bar s \circ T_{\bE}^{-1} h_{\bE} \in \cB$, 
where $h_{\bE}$ is the right eigenvector of $\Lp_{T_{\bE}}$.
Since $\int \bar s \, \psi \circ T_{\bE}^n \, d\mu_{\bE} = \int \bar s \circ T_{\bE}^{-1} \, \psi \circ T_{\bE}^{n-1} \, d\mu_{\bE}$, it suffices to work with $\bar s \circ T_{\bE}^{-1}$.

Notice also that since we are in the case $a=0$, the conformal measure is $\mu_0$, i.e.
$\Lp_{T_{\bE}}^* \mu_0 = \mu_0$.  Thus for $n \ge 0$ and $\psi \in \C^\alpha(T_{\bE}^{-n}\cW^s)$,
we have $\mu_{\bE} = h_{\bE} \otimes \mu_0$ and
$\mu_{\bE}(\psi) = \langle h_{\bE}, \psi \mu_0 \rangle$.

In order to estimate the expression in \eqref{eq:dual}, we shall need two lemmas.  Let $B_0(\cA_0)$
denote the set of bounded functions on $M$, which are measurable with respect to $\cA_0$.

\begin{lemma}
\label{lem:decay}
Suppose there exist $C>0$ (depending on $\bar g$) and $\rho < 1$ such that
\begin{equation}
\label{eq:inf decay}
\mu_{\bE} (\bar s \cdot \psi \circ T_{\bE}^n ) \le C \rho^n |\psi|_\infty, \quad
\mbox{for all $\psi \in B_0(\cA_0)$,}
\end{equation}
where $|\psi|_{\infty} = \sup_{x \in M} |\psi(x)|$. 
Then there exists $C'>0$ such that
\[
\mu_{\bE}(\bar s \cdot \psi \circ T_{\bE}^n) \le C' \rho^{n/2} |\psi|^2_{L^2(\mu_{\bE})},
\quad \mbox{for all $\psi \in L^2(\cA_0, \mu_{\bE})$.}
\]
\end{lemma}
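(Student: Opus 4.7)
The plan is a standard truncation-interpolation argument upgrading the hypothesized correlation bound against bounded $\cA_0$-measurable functions to one against $L^2(\cA_0,\mu_{\bE})$, at the cost of halving the exponential decay rate. The crucial additional input is that $\bar s$ is \emph{uniformly} bounded on $M$: indeed $\bar s = \log(1+\ve H) - \mu_{\bE}(\log(1+\ve H))$ with $|H|_\infty \le C_H$ by Lemma~\ref{TFJ}, so $|\bar s|_\infty < \infty$ uniformly for $T_{\bE}\in\cF(\ve_1,\tau_*,C_0)$.

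Given $\psi\in L^2(\cA_0,\mu_{\bE})$ and a truncation level $L>0$, I would set $\psi_L:=\psi\cdot\mathbf{1}_{|\psi|\le L}$. Since $\psi$ is $\cA_0$-measurable, so is $\psi_L$, and by construction $|\psi_L|_\infty\le L$. The decomposition
\[
\mu_{\bE}(\bar s\cdot\psi\circ T_{\bE}^n) \;=\; \mu_{\bE}(\bar s\cdot\psi_L\circ T_{\bE}^n) \;+\; \mu_{\bE}\bigl(\bar s\cdot(\psi-\psi_L)\circ T_{\bE}^n\bigr)
\]
splits the integral into a bounded part, to which \eqref{eq:inf decay} applies directly giving $|\mu_{\bE}(\bar s\cdot\psi_L\circ T_{\bE}^n)|\le C\rho^n L$, and a tail part.

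For the tail, using the $T_{\bE}$-invariance of $\mu_{\bE}$, then Hölder's inequality in $L^1$--$L^\infty$, then Cauchy--Schwarz and Chebyshev's inequality,
\[
\bigl|\mu_{\bE}(\bar s\cdot(\psi-\psi_L)\circ T_{\bE}^n)\bigr| \;\le\; |\bar s|_\infty\!\int_{|\psi|>L}\!|\psi|\,d\mu_{\bE} \;\le\; |\bar s|_\infty\,|\psi|_{L^2(\mu_{\bE})}\,\mu_{\bE}(|\psi|>L)^{1/2} \;\le\; \frac{|\bar s|_\infty\,|\psi|^{2}_{L^2(\mu_{\bE})}}{L}.
\]
Summing and optimizing over $L$ by the choice $L=|\psi|_{L^2(\mu_{\bE})}\rho^{-n/2}$ balances the two contributions and yields
\[
\bigl|\mu_{\bE}(\bar s\cdot\psi\circ T_{\bE}^n)\bigr| \;\le\; C'\rho^{n/2}\,|\psi|_{L^2(\mu_{\bE})}.
\]
In the dual characterization \eqref{eq:dual} that will be used to bound $\|E(\bar s|\cA_n)\|_{L^2(\mu_{\bE})}$, the test function $\psi$ is normalized so that $|\psi|_{L^2(\mu_{\bE})}=1$; under this normalization $|\psi|_{L^2(\mu_{\bE})}=|\psi|^{2}_{L^2(\mu_{\bE})}$, so the estimate just derived is equivalent to the bound stated in the lemma.

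I do not expect a serious obstacle. The only points requiring attention are (a) the uniform boundedness of $\bar s$ — genuinely essential, since without it one would fall back to Cauchy--Schwarz in $L^2$ for the tail and lose all decay; and (b) the $\cA_0$-measurability of $\psi_L$, which is automatic because $\mathbf{1}_{|\psi|\le L}$ is a Borel function of $\psi$. The argument applies verbatim with $\bar s$ replaced by $\bar s\circ T_{\bE}^{-1}$ if one wishes to align with the time-shifted formulation used earlier in the proof of the CLT.
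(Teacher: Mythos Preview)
Your proposal is correct and follows essentially the same truncation--interpolation argument as the paper: split $\psi=\psi_L+(\psi-\psi_L)$, apply \eqref{eq:inf decay} to the bounded piece, and control the tail via Cauchy--Schwarz and Chebyshev together with $|\bar s|_\infty<\infty$. The only cosmetic difference is that the paper takes $L=\rho^{-n/2}$ (independent of $\psi$), arriving at $\rho^{n/2}(C+|\bar s|_\infty|\psi|^2_{L^2})$, whereas you optimize with $L=|\psi|_{L^2}\rho^{-n/2}$ to get the linear bound $C'\rho^{n/2}|\psi|_{L^2}$; as you correctly observe, the two agree under the normalization $|\psi|_{L^2}=1$ used in \eqref{eq:dual}.
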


The following lemma is a strengthening of Lemma~\ref{lem:distr}(i).  It shows that the estimate
of that lemma holds true in the limit as $n \to \infty$.

\begin{lemma}
\label{lem:dual}
There exists $C>0$ such that for any $h \in \B_w$ and any bounded function $\psi$,
\[
|h(\psi)| \le C |h|_w (|\psi|_{\infty} + C^\alpha_{\cA_0}(\psi)),
\]
where $C^\alpha_{\cA_0}(\cdot)$ denotes the H\"older constant of $\psi$ with exponent $\alpha$
measured along curves in $\cA_0$.
\end{lemma}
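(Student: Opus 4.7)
The strategy is to extend Lemma~\ref{lem:distr}(i), which is formulated for $\psi \in \C^\alpha(T_{\bE}^{-n}\cW^s)$ for finite $n$, to test functions $\psi$ with only $\cA_0$-Hölder regularity by an approximation argument. Specifically, I would construct a sequence $\psi_n \in \tilde\C^\alpha(T_{\bE}^{-n}\cW^s)$ as follows. Let $\cP_n$ denote the partition of $M$ into connected components of $M \setminus \cS_n^{T_{\bE}, \mathbb{H}}$. Each $P \in \cP_n$ is smoothly foliated by stable curves in $T_{\bE}^{-n}\cW^s$, and by uniform hyperbolicity \textbf{(H1)} the unstable diameter of $P$ is $O(\Lambda^{-n})$. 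On each $P$, I would fix a short reference unstable curve $U_P$ transverse to the stable foliation and define the stable holonomy $\pi_P : P \to U_P$; then set $\psi_n(x) := \psi(\pi_P(x))$ for $x \in P$.

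With this construction, $|\psi_n|_\infty \le |\psi|_\infty$ trivially, and using bounded distortion \textbf{(H4)} together with the Hölder regularity of the stable holonomy, one obtains the key uniform bound $H^\alpha_n(\psi_n) \le C_0 \cdot C^\alpha_{\cA_0}(\psi)$ with $C_0$ independent of $n$. Applying Lemma~\ref{lem:distr}(i) then yields $|h(\psi_n)| \le C|h|_w (|\psi|_\infty + C_0 \cdot C^\alpha_{\cA_0}(\psi))$ uniformly in $n$. To pass to the limit, I would show $\{h(\psi_n)\}$ is Cauchy by applying the same lemma to differences $\psi_n - \psi_m$ for $n > m$: the discrepancy comes from fine-scale variation of $\psi$ across stable manifolds within a single $\cP_m$-element, on the transverse scale $O(\Lambda^{-m})$, yielding $|\psi_n - \psi_m|_\infty = O(\Lambda^{-m\alpha}) \cdot C^\alpha_{\cA_0}(\psi)$ and exponential convergence. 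Define $h(\psi) := \lim_n h(\psi_n)$; the required bound is inherited, and independence of the approximating sequence follows from the same Cauchy argument applied to two different choices of reference curves $U_P$.

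The main obstacle is securing the uniform Hölder estimate $H^\alpha_n(\psi_n) \le C_0 \cdot C^\alpha_{\cA_0}(\psi)$ with $C_0$ independent of $n$, since the stable holonomy is only Hölder continuous and its modulus of continuity can degenerate near the singularity set $\cS_n^{T_{\bE}, \mathbb{H}}$. Overcoming this requires exploiting the homogeneity strip construction from Section~\ref{norms} to ensure uniform control of the holonomy Jacobian via \textbf{(H4)}, together with the fact that $\alpha$ has been chosen small enough in the definition of the norms to accommodate the regularity loss of the holonomy across homogeneity strips; these are the same considerations that underlie the Banach space machinery developed in \cite{demers zhang, DZ2}.
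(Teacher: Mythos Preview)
Your approach has a genuine gap in the key step. You claim $H^\alpha_n(\psi_n) \le C_0 \cdot C^\alpha_{\cA_0}(\psi)$, but this cannot hold with the construction you describe. You define $\psi_n(x) = \psi(\pi_P(x))$ where $\pi_P$ is the stable holonomy onto a reference \emph{unstable} curve $U_P$. For $x, y$ on a stable curve $W \in T^{-n}\cW^s$ (which is not in general a true stable manifold), the points $\pi_P(x)$ and $\pi_P(y)$ lie on $U_P$, and you need to bound $|\psi(\pi_P(x)) - \psi(\pi_P(y))|$. But the only regularity assumed on $\psi$ is along elements of $\cA_0$ (stable manifolds); nothing controls the variation of $\psi$ along the unstable curve $U_P$. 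Indeed, take $\psi \in B_0(\cA_0)$ so that $C^\alpha_{\cA_0}(\psi)=0$: then $\psi_n = \psi$ (since $\pi_P(x)$ and $x$ lie on the same stable manifold), yet $\psi$ restricted to a generic stable curve $W$ need not even be continuous, so $H^\alpha_n(\psi_n)$ is not bounded by $C_0 \cdot 0$. The same obstruction undermines your Cauchy argument: the ``fine-scale variation across stable manifolds'' you invoke is exactly the transverse variation of $\psi$, which $C^\alpha_{\cA_0}(\psi)$ does not control.

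The paper's proof proceeds by a completely different and much more direct route. It reduces to $h \in \C^1(M)$ by density, then \emph{disintegrates} $\mu_0$ along the partition $\cA_0$ into maximal homogeneous stable manifolds $\{W_\xi\}$, obtaining conditional densities $\eta_\xi$ and a factor measure $\hat\mu_0$. Since each $W_\xi$ is itself an element of $\cW^s$, the weak norm applies directly on each leaf:
\[
\Big| \int_{W_\xi} h\,\psi\,\eta_\xi \, dm_{W_\xi} \Big| \le |h|_w \, |\psi|_{\C^\alpha(W_\xi)} \, |\eta_\xi|_{\C^\alpha(W_\xi)},
\]
and here $|\psi|_{\C^\alpha(W_\xi)} \le |\psi|_\infty + C^\alpha_{\cA_0}(\psi)$ is exactly what is assumed. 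The log-H\"older regularity of $\eta_\xi$ gives $|\eta_\xi|_{\C^\alpha(W_\xi)} \le C|W_\xi|^{-1}$, and the remaining integral $\int |W_\xi|^{-1} d\hat\mu_0$ is finite because the disintegration of $\mu_0$ on homogeneous stable manifolds is a proper standard family (the $Z$-function is bounded). No approximation of $\psi$ is needed, and no transverse regularity of $\psi$ is ever invoked.
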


We postpone the proofs of the lemmas and first show how they allow us to complete the proof
of Theorem~\ref{thm:moment}. 
For $\psi \in \cB_0(\cA_0)$ , $C^\alpha_{\cA_0}(\psi) =0$ since $\psi$ is constant on curves in $\cA_0$.
We estimate the correlations using Lemma~\ref{lem:dual},
\begin{equation}
\label{eq:decay}
\begin{split}
\left| \int \bar s \, \psi \circ T_{\bE}^n \, d\mu_0 \right| 
& = \left| \int \bar s \circ T_{\bE}^{-1} \, \psi \circ T_{\bE}^{n-1} \, d\mu_{\bE} \right|
 = \left| \langle \bar s \circ T_{\bE}^{-1} \, h_{\bE}, \psi \circ T_{\bE}^{n-1} \, \mu_0 \rangle \right| \\
& = \left| \langle \bar s \circ T_{\bE}^{-1} \, h_{\bE}, (\Lp_{T_{\bE}}^*)^{n-1} (\psi \mu_0) \rangle \right| 
 = \left| \langle \Lp_{T_{\bE}}^{n-1}(\bar s \circ T_{\bE}^{-1} \, h_{\bE}), \psi \mu_0 \rangle \right| \\
& \le C | \Lp_{T_{\bE}}^{n-1}( \bar s \circ T_{\bE}^{-1} h_{\bE})|_w  ( |\psi|_\infty + H^\alpha_{\cA_0}(\psi)) 
\le C \| R_{\bE}^{n-1}(\bar s \circ T_{\bE}^{-1} h_{\bE})\|_{\B} |\psi|_{\infty} \\
& \le C \rho^n \| h_{\bE} \|_{\cB} | \bar s \circ T_{\bE}^{-1} |_{\C^{\alpha_1/2}(\cP_{-1})} |\psi|_{\infty} ,
\end{split}
\end{equation}
for some $\rho<1$
where $| \bar s \circ T_{\bE}^{-1} |_{\C^{\alpha_1/2}(\cP_{-1})}$ denotes the H\"older constant of
$\bar s$ on elements of the partition formed by the connected components of $M \setminus \cS_{-1}^{T_{\bE}}$, and we have used \eqref{eq:decomp} and
the fact that $\Pi_{\bE} (\bar s \circ T_{\bE}^{-1} h_{\bE}) = 0$
since $\langle \bar s \circ T_{\bE}^{-1} h_{\bE}, \mu_0 \rangle 
= \mu_{\bE}(\bar s \circ T_{\bE}^{-1}) = \mu_{\bE}(\bar s) = 0$.

From \eqref{eq:decay}, we see that $\bar s$ has uniform exponential
decay of correlations against $\psi \in B_0(\cA_0)$ and so satisfies the hypotheses of 
Lemma~\ref{lem:decay}.  It follows that $\bar s$ enjoys a
uniform exponential rate of decay of correlations against $\psi \in L^2(\cA_0, \mu_{\bE})$, so
by \eqref{eq:dual}, this yields an exponential decay in the $L^2$-norm of $E(\bar s| \cA_n)$.
We conclude that the first series in \eqref{eq:conditions} converges.  
Since the hypotheses of Theorem~\ref{thm:CLT} are verified,
it follows that $\sigma_{\bE}^2 = 0$ if and only if $\bar s = u \circ T_{\bE} - u$ for some
$u \in L^2(\mu_{\bE})$, and the proof of Theorem~\ref{thm:moment} is complete.
\end{proof}

%%%%%%%%%%%%%%%%%%%%%%%%%%%%%%%%%%%%%%%%%%%%

\begin{proof}[Proof of Lemma~\ref{lem:decay}]
Let $\psi \in L^2(\cA_0, \mu_{\bE})$ be arbitrary.  For $L \in \mathbb{R}^+$, define
$\psi_L(x) = \psi(x)$ when $|\psi(x)| \le L$ and $\psi_L(x) = 0$ otherwise.  Clearly, 
$\psi_L \in B_0(\cA_0)$ and $|\psi_L|_\infty \le L$.  Now for $n \in \mathbb{N}$,
\begin{equation}
\label{eq:split}
\left| \int \bar s \cdot \psi \circ T_{\bE}^n \, d\mu_{\bE} \right| \le \left| \int \bar s \cdot \psi_L \circ T_{\bE}^n \, d\mu_{\bE} \right| + | \bar s|_{\infty} \int |\psi - \psi_L| \, d\mu_{\bE}  .
\end{equation}
To bound the second term on the right side of \eqref{eq:split}, note that
\[
\int |\psi - \psi_L | \, d\mu_{\bE} \le \int 1_{|\psi| > L} \cdot |\psi| \, d\mu_{\bE}
\le \mu_{\bE}(|\psi|>L)^{1/2} | \psi |_{L^2(\mu_{\bE})},
\]
while $\mu(|\psi| > L) = \mu(\psi^2 > L^2) \le L^{-2} | \psi |^2_{L^2}$, by Markov's inequality.
Using \eqref{eq:inf decay} for the first term on the right side of \eqref{eq:split}, we obtain,
\begin{equation}
\label{eq:uni}
\left| \int \bar s \cdot \psi \circ T_{\bE}^n \, d\mu_{\bE} \right| \le C \rho^n L + L^{-1} |\bar s|_{\infty} | \psi |^2_{L^2(\mu_{\bE})} \le \rho^{n/2} (C + |\bar s|_{\infty} | \psi |^2_{L^2(\mu_{\bE})}),
\end{equation}
if we set $L = \rho^{-n/2}$, and the lemma is proved.
\end{proof}

\begin{proof}[Proof of Lemma~\ref{lem:dual}]
Due to the density of $\C^1(M)$ in $\cB_w$, it suffices to prove the lemma for $h \in \C^1(M)$.

On each component $M_i$ of $M$, $i = 1, \ldots, d$, 
we disintegrate the smooth measure $\mu_0$ on elements
of $\cA_0$.  Since elements of $\cA_0$ are homogeneous stable manifolds, the decomposition
respects the boundaries of the homogeneity strips.  Let $\cA_{0, i} = \{ W_\xi \}_{\xi \in \Xi_i}$ 
denote the set of homogeneous local stable manifolds in $M_i$ with index set $\Xi_i$.  The disintegration
of $\mu_0$ on elements of $\cA_{0,i}$ yields conditional densities $\eta_{\xi}$ on $W_\xi$, normalized
so that $\int_{W_\xi} \eta_\xi \, dm_{W_\xi} = 1$, and a factor measure $\hmu_0$ on the index set
$\Xi_i$.  By \cite[Corollary~5.30]{chernov book}, $\eta_\xi$ is (uniformly in $\xi$) log-H\"older
continuous with exponent $1/3$.  Now,
\begin{equation}
\label{eq:distr}
h(\psi) = \int_M h \psi \, d\mu_0 = \sum_i \int_{\Xi_i} \int_{W_\xi} h \,\psi \,\eta_\xi \, dm_{W_\xi} d\hmu_0(\xi) .
\end{equation}
On each $W_\xi$, we estimate using the weak norm.
\[
\left| \int_{W_\xi} h \,\psi \,\eta_\xi \, dm_{\xi} \right|
\le |h|_w |\psi|_{\C^\alpha(W_\xi)} |\eta_{\xi}|_{\C^\alpha(W_\xi)} .
\]
Due to the log-H\"older regularity of $\eta_\xi$, there exists a constant $C_\eta > 0$ such that
\[
| \eta_\xi(x) - \eta_\xi(y) | \le C_\eta |\eta_\xi(x)| d(x,y)^{1/3} \le C' |W_\xi|^{-1} d(x,y)^{1/3},
\]
where the bound on the sup-norm of $\eta_\xi$ comes from the normalization of the conditional measures.
Putting these estimates into \eqref{eq:distr} we obtain,
\[
| h(\psi) | \le C' |h|_w (|\psi|_\infty + H^\alpha_{\cA_0}(\psi)) \sum_i \int_{\Xi_i} |W_\xi|^{-1} \, d\hmu_0(\xi) .
\]
This last integral is precisely the integral that characterizes the $Z$-function for a standard family
\cite[Section~7.4]{chernov book} which measures the prevalence of short curves in that family.
Since the disintegration of $\mu_0$ on maximal homogeneous stable manifolds creates a proper
family,\footnote{In fact, Example 7.21 and Exercise 7.22 of \cite{chernov book} are stated in terms
of the disintegration of $\mu_0$ on maximal homogeneous unstable manifolds.  Using the
reversibility of $T_{\bE}$, the analogous properties hold for maximal homogeneous stable manifolds.} 
this integral is finite (see \cite[Exercise~7.22]{chernov book} for the decomposition using stable 
manifolds for the unperturbed billiard $T_0$ and
\cite{CZ09, CZZ} for the decomposition using stable manifolds for the perturbed billiard $T_{\bE}$). 
\end{proof}

\iffalse
As an example, consider an electric field 
$\mathbb{E}(\bq)=(\eps e_1(\bq),\eps e_2(\bq))$ on $\bq\in Q$.
It may generate a net velocity in the
force direction and keep accelerating the electron. We can modify the system by adding a constraining
force to maintain the system at a constant temperature (a compact
level set, say $\cE_1=\{\|\bp\|\equiv 1\}$), and to preserve a steady state on that
level. More precisely, the system of the forced equations on $\cE_1$ is given by:
 $\dot
\bq=\bp$, $\dot\bp=\mathbb{E}-\alpha \bp$, where $\alpha=\mathbb{E}\cdot \bp$
is a thermostat. For such systems, it was shown in \cite{CZZ} that 
 $H(x)= (e^0_1,e^0_2)\cdot\Delta_{\mathbb{E}}(x)$.
\fi

%%%%%%%%%%%%%%%%%%%%%%%%%%%%%%%%%%%%%%%%%%
\appendix

\section{Lasota-Yorke Estimates}
\label{appendix}

\subsection{Preliminary estimates}
\label{preliminary}

Before proving the Lasota-Yorke inequalities, we show how {\bf (H1)}-{\bf
(H5)} imply
several other uniform properties for our class of maps $\F$. In
particular, we will be
interested in iterating the one-step expansion given by {\bf
(H3)}.
We recall the estimates we need from \cite[Section 3.2]{demers zhang}.

Let $T \in \F$ and $W \in \W^s$. Let $V_i$ denote the maximal connected
components
of $T^{-1}W$ after cutting due to singularities and the boundaries of the
homogeneity
strips. To ensure that each component of $T^{-1}W$ is in $\W^s$, we
subdivide any of the
long pieces $V_i$ whose length is $>\delta_0$, where $\delta_0$ is chosen
in \eqref{eq:one step contract}.        This process is then iterated
so that
given $W \in \W^s$, we construct the components of $T^{-n}W$, which we
call
the $n^{\mbox{\scriptsize th}}$ generation $\G_n(W)$, inductively as
follows.
Let $\G_0(W) = \{ W \}$ and suppose we have
defined $\G_{n-1}(W) \subset \W^s$. First, for any $W' \in \G_{n-1}(W)$,
we partition $T^{-1}W'$ into at most countably many pieces $W'_i$ so that
$T$ is
smooth on each $W'_i$ and
each $W'_i$ is a homogeneous stable curve.
If any $W'_i$ have length greater than $\delta_0$, we subdivide those
pieces into pieces
of length between $\delta_0/2$ and $\delta_0$.
We define $\G_n(W)$ to be the collection of all pieces $W^n_i \subset
T^{-n}W$ obtained
in this way.  Note that each $W^n_i$ is in $\W^s$ by {\bf (H2)}.

At each iterate of $T^{-1}$, typical curves in $\G_n(W)$ grow in size, but
there exist a portion of curves which are trapped in tiny homogeneity
strips and in the infinite horizon case, stay too close to the infinite
horizon points. In Lemma~\ref{lem:growth}, we make precise the sense in
which the proportion
of curves that never grow to a fixed
length decays exponentially fast.

For $W \in \W^s$, $n \geq 0$, and $0 \le k \le n$, let $\G_k(W) = \{ W^k_i
\}$ denote
the $k^{\mbox{\scriptsize th}}$ generation pieces in $T^{-k}W$.  Let
$B_k(W) = \{ i : |W^k_i|< \delta_0/3 \}$ and $L_k(W) = \{ i : |W^k_i| \ge
\delta_0/3 \}$
denote the index of the short and long elements of $\G_k(W)$,
respectively.
We consider
$\{\G_k \}_{k=0}^n$ as a tree with $W$ as its root and $\G_k$ as the
$k^{\mbox{\scriptsize th}}$ level.

At level $n$, we group the pieces as follows. Let $W^n_{i_0} \in \G_n(W)$
and
let $W^k_j \in L_k(W)$ denote the most recent long ``ancestor" of
$W^n_{i_0}$, i.e.\
$k = \max \{ 0 \leq \ell \le n : T^{n-\ell}(W^n_{i_0}) \subset W^\ell_j \;
\mbox{and} \;
j \in L_\ell \}$. If no such ancestor exists, set $k=0$ and $W^k_j = W$.
Note that
if $W^n_{i_0}$ is long, then $W^k_j = W^n_{i_0}$.  Let
\[
\I_n(W^k_j) = \{ i : W^k_j \in L_k(W) \; \mbox{is the most recent long
ancestor of} \; W^n_i \in \G_n(W) \}.
\]
The set $\I_n(W)$ represents those curves $W^n_i$ that belong to short
pieces in $\G_k(W)$
at each time step $1 \leq k \le n$, i.e.\ such $W^n_i$ are never part of a
piece
that has grown to length $\geq \delta_0/3$.

We collect the necessary complexity estimates in the
following lemma.

\begin{lemma}
\label{lem:growth} 
Let $W \in \W^s$, $T \in \F$ and for $n \geq 0$, let $\I_n(W)$ and
$\G_n(W)$
be defined as above.
There exist constants $C_1, C_2, C_3>0$, independent of $W$ and $T$, such
that for
any $n\geq 0$,
\begin{itemize}
  \item[(a)] $\ds
\sum_{i \in \I_n(W)} |J_{W^n_i}T^n|_{\C^0(W^n_i)} \leq C_1 \theta_*^n $;
 \item[(b)] $\ds
\sum_{W^n_i \in \G_n(W)} |J_{W^n_i}T^n|_{\C^0(W^n_i)} \le C_2     $;
 \item[(c)]  for any $0 \leq \varsigma \leq 1$,
$\ds
\sum_{W^n_i \in \G_n(W)} \frac{|W^n_i|^\varsigma}{|W|^\varsigma} \;
|J_{W^n_i}T^n|_{\C^0(W^n_i)} \le C_2^{1-\varsigma} $.
 \item[(d)] for $\varsigma > 1/2$,
$\ds
\sum_{W^n_i \in \G_n(W)} |J_{W^n_i}T^n|_{\C^0(W^n_i)}^\varsigma \le
C_3^n$,
where $C_3$ depends on $\varsigma$.
\end{itemize}
\end{lemma}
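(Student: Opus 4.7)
The driving engine is the one-step expansion bound \eqref{eq:one step contract} from (\textbf{H3}): for any $T \in \F$ and any $W \in \W^s$, $\sum_i |J_{V_i}T|_* \le \theta_*$, where the sum runs over the homogeneous connected components of $T^{-1}W$. Uniform equivalence of the adapted norm with the Euclidean norm converts this to a bound on $\sum_i |J_{V_i}T|_{\C^0}$, and bounded distortion (\textbf{H4}) will let me freely exchange the supremum $|J_W T^n|_{\C^0(W)}$ for the length ratio $|T^nW|/|W|$ at the cost of a uniform constant $D$. I will absorb the norm-equivalence constant into $\theta_*$ by passing to a fixed iterate of $T$ (shrinking $\delta_0$ if needed), so below I write simply $\theta_*$ for the one-step contraction rate in the sup norm.

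\textbf{Parts (a) and (b).} For (a), every $W^n_i$ with $i \in \I_n(W)$ has short ancestors at every generation, so \eqref{eq:one step contract} applies at each level; a straightforward induction on $n$, telescoping Jacobians via the chain rule, gives $\sum_{\I_n(W)} |J T^n|_{\C^0} \le C_1 \theta_*^n$. For (b), I partition $\G_n(W)$ according to the most recent long ancestor $W^k_j \in L_k(W)$ (with the convention that $k=0$, $W^k_j = W$ covers the short-always curves from (a)). The descendants of $W^k_j$ at level $n$ stay short throughout $[k+1,n]$, so (a) applied internally to $W^k_j$ combined with the chain rule yields
\[
\sum_{W^n_i \in \G_n(W)} |J T^n|_{\C^0(W^n_i)} \le \sum_{k=0}^{n} C_1 \theta_*^{n-k} \sum_{W^k_j \in L_k(W)} |J T^k|_{\C^0(W^k_j)}.
\]
Since each long piece has $|W^k_j| \ge \delta_0/3$ and the images $\{T^k W^k_j\}_{j \in L_k}$ are disjoint subsets of $W$, bounded distortion bounds the inner sum by $D|W|/(\delta_0/3) \le 3D$ uniformly in $k$. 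Summing the geometric series in $\theta_*^{n-k}$ produces a uniform constant $C_2$.

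\textbf{Parts (c) and (d).} For (c), I use H\"older interpolation. Writing
\[
\frac{|W^n_i|^\varsigma}{|W|^\varsigma} |J T^n|_{\C^0(W^n_i)} = \left(\frac{|W^n_i|}{|W|} |J T^n|_{\C^0(W^n_i)}\right)^{\!\varsigma} \left(|J T^n|_{\C^0(W^n_i)}\right)^{1-\varsigma},
\]
H\"older with conjugate exponents $1/\varsigma$ and $1/(1-\varsigma)$ reduces the problem to bounding $\sum_i (|W^n_i|/|W|)|J T^n|_{\C^0}$, which is $O(1)$ by distortion and the disjointness of $\{T^n W^n_i\} \subset W$, together with $\sum_i |J T^n|_{\C^0} \le C_2$ from (b). For (d), the one-step sum $\sum_i |J_{V_i}T|_{\C^0}^\varsigma$ is dominated by the contributions of the homogeneity strips $\bH_k$, where \eqref{eq:expansion} forces $|J_{V_i}T|_{\C^0} \asymp k^{-2}$; the tail series $\sum_k k^{-2\varsigma}$ converges precisely when $\varsigma > 1/2$, yielding a uniform one-step bound $K_\varsigma$. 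Inducting on $n$ via the chain rule (with one factor of $D^\varsigma$ per step from distortion) gives $\sum_{\G_n(W)} |J T^n|_{\C^0}^\varsigma \le (D^\varsigma K_\varsigma)^n = C_3^n$.

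\textbf{Main obstacle.} The delicate step is the uniform control of $\sum_{W^k_j \in L_k(W)} |J T^k|_{\C^0(W^k_j)}$ in (b). A priori the number of long pieces at generation $k$ could grow with $k$, so a purely combinatorial count would be useless. The geometric observation that rescues the argument is the pairing of two facts: long pieces have lengths bounded below by $\delta_0/3$, while their $T^k$ images are disjoint subsets of $W$. Bounded distortion translates this into a Jacobian-sum bound of order $|W|/\delta_0$ uniformly in $k$, which is precisely what allows the expansion in (b) to telescope to an $n$-independent constant. The other threshold, $\varsigma > 1/2$ in (d), is forced by the non-summability of $\sum k^{-2\varsigma}$ outside this range.
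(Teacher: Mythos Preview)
Your argument is correct and matches the paper's approach, which simply cites \cite{demers zhang} for (a)--(c) and sketches (d) exactly as you do (one-step bound via $\sum_k k^{-2\varsigma}<\infty$ for $\varsigma>1/2$, then induction). One small technical point: for (a), rather than ``absorbing the norm-equivalence constant into $\theta_*$ by passing to a fixed iterate,'' it is cleaner to run the entire induction in the adapted norm $\|\cdot\|_*$ to obtain $\sum_{\I_n}|J_{W^n_i}T^n|_*\le\theta_*^n$ and then convert once at the end, so the equivalence constant lands in $C_1$ rather than in the base of the exponential; this preserves the exact $\theta_*$ from \eqref{eq:one step contract} in the statement.
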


\begin{proof}
Item (a) follows from the one-step expansion {\bf (H3)} by induction as in \cite[Lemma 3.1]{demers zhang}.  Items (b) and (c) are precisley \cite[Lemmas 3.2 and 3.3]{demers zhang}.

For item (d), we first prove that the claimed estimate holds for $n=1$.  Indeed, due to {\bf (H1)},
the expansion for each stable curve landing in a homogeneity strip $\Ho_k$ under $T^{-1}$ is of the
order of $k^{-2}$.  If $T^{-1}$ crosses a countable number of singularity curves, the sum of the
expansion factors is uniformly bounded as long as $\varsigma > 1/2$.  Since there are only
finitely many genuine singularity curves in $\cS_{-1}^T$ (not counting homogeneity strips),
the required sum is uniformly bounded for all $W \in \cW^s$ with $|W| \le \delta_0$.  The estimate
for general $n$ follows by induction as in \cite[Lemma 3.4]{demers zhang}.
\end{proof}

Next we state a distortion bound for the stable Jacobian of $T$ along
different stable curves in the following context.
Let $W^1, W^2 \in \W^s$ and suppose there exist $U^k \subset T^{-n}W^k$,
$k=1,2$,
such that for $0 \le i \le n$,
\begin{enumerate}
\item[(i)] $T^iU^k \in \W^s$ and the curves $T^iU^1$ and $T^iU^2$ lie in
the same homogeneity
strip;
  \item[(ii)]  $U^1$ and $U^2$ can be put into a 1-1 correspondence
by a smooth foliation $\{ \gamma_x \}_{x \in U^1}$ of curves
$\gamma_x \in \widehat\W^u$ such that
$\{ T^n\gamma_x \} \subset \widehat\W^u$ creates a 1-1 correspondence
between
$T^nU^1$ and $T^nU^2$;
\item[(iii)] $|T^i\gamma_x| \le 2 \max \{ |T^iU^1|, |T^iU^2| \}$, for all
$x \in U^1$.
\end{enumerate}
Let $J_{U^k}T^n$ denote the stable Jacobian of $T^n$ along the
curve $U^k$ with respect to arclength. The following lemma was proved in
\cite{DZ2}.

\begin{lemma}
\label{lem:angles}
In the setting above, for $x \in U^1$, define $x^* \in \gamma_x \cap U^2$.
There
exists $C_0 > 0$, independent of $T \in \F$, $W \in \W^s$ and $n \ge 0$
such that
\begin{enumerate}
\item[(a)]  $d_{\W^s}(U^1, U^2) \le C_0 \Lambda^{-n} d_{\W^s}(W^1, W^2)$;  \item[(b)] $\ds
\left| \frac{J_{U^1}T^n(x)}{J_{U^2}T^n(x^*)} -1 \right| \; \leq \;
C_0[d(T^nx,T^n x^*)^{1/3}
+\theta(T^nx, T^n x^*)] $,
\end{enumerate}
where $\theta(T^nx, T^n x^*) $ is the angle formed by the tangent lines of 
$T^nU^1$ and $T^nU_2$ at $T^nx$ and $T^n x^*$, respectively.
\end{lemma}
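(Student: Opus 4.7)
The plan is to use the standard hyperbolic toolkit adapted to billiards, exploiting the fact that $T^iU^1$ and $T^iU^2$ stay in a common homogeneity strip for $0\le i\le n$, so that the bounded distortion bound \eqref{eq:distortion stable}--\eqref{eq:D u dist} and the controlled second derivative \eqref{eq:2 deriv} apply uniformly. Throughout, both tangent lines to $U^1, U^2$ (and to their images) lie in the invariant stable cone and are uniformly transverse to the horizontal by {\bf (H1)}, so we may parametrise the curves as graphs $\vf_{U^i}(r)$ without degeneracy.

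For part (a), fix $x\in U^1$ and let $x^* = \gamma_x\cap U^2$ be its match. Since $\gamma_x\in\widehat{\cW}^u$ and each $T^i\gamma_x$ is an unstable curve by assumption, the uniform expansion \eqref{eq:uniform hyp} in the adapted norm from {\bf (H3)} gives $|\gamma_x|_\ast\le \Lambda^{-n}|T^n\gamma_x|_\ast$, hence $|\gamma_x|\le C\Lambda^{-n}|T^n\gamma_x|$ in Euclidean length. Condition (iii) together with the definition of $d_{\cW^s}(W^1,W^2)$ on the common $r$-interval gives $|T^n\gamma_x|\le C' d_{\cW^s}(W^1,W^2)$. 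Since $U^1,U^2$ lie in the same homogeneity strip, the strip-indicator contribution vanishes, and the previous inequality bounds both the length of the symmetric difference $\ell(I_{U^1}\triangle I_{U^2})$ and the $\C^0$ distance $|\vf_{U^1}-\vf_{U^2}|_{\C^0}$ by $C\Lambda^{-n}d_{\cW^s}(W^1,W^2)$. For the $\C^1$ part of the distance, I would invoke the standard graph-transform argument: the cone $C^s$ is strictly $DT^{-1}$-invariant and $DT^{-1}$ acts as a contraction in the slope variable (with rate dominated by $\Lambda^{-1}$) inside each homogeneity strip, so the slope difference $|\vf_{U^1}'-\vf_{U^2}'|$ at matched points inherits the factor $\Lambda^{-n}$ from the slope difference of $W^1, W^2$.

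For part (b), take logarithms and use the chain rule,
\[
\log\frac{J_{U^1}T^n(x)}{J_{U^2}T^n(x^*)}\;=\;\sum_{k=0}^{n-1}\log\frac{J_{T^kU^1}T(T^kx)}{J_{T^kU^2}T(T^kx^*)}.
\]
Using the usual billiard formula that expresses $J_WT$ in terms of $J_{\mu_0}T$, the cosines of collision angles, and the stable tangent slope (see e.g.\ \cite[Ch.~4]{chernov book}), each summand splits as (i) a difference of $J_{\mu_0}T$ at two nearby points lying in the same homogeneity strip, controlled via \eqref{eq:distortion stable} applied along the unstable curve $T^k\gamma_x$, and (ii) a difference of the tangent-slope correction, controlled by the angle $\theta_k$ between the tangent lines of $T^kU^1, T^kU^2$ at matched points. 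Each contribution is bounded by $C\,[\,d(T^{k+1}x,T^{k+1}x^*)^{1/3}+\theta_k\,]$. By the unstable expansion \eqref{eq:uniform hyp} along $T^k\gamma_x$ we have $d(T^kx,T^kx^*)\le C\Lambda^{-(n-k)}d(T^nx,T^nx^*)$, and by the same graph-transform contraction as in (a), $\theta_k\le C\Lambda^{-(n-k)}\theta(T^nx,T^nx^*)$. Summing the resulting geometric series yields the desired estimate with a constant $C_0$ that depends only on $\Lambda$, $C_d$, $C_e$ and the constants in \eqref{eq:expansion}--\eqref{eq:2 deriv}, hence is uniform over $T\in\cF$.

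The main obstacle will be the angle-contraction claim underlying both part (a)'s $\C^1$ estimate and the bound $\theta_k\le C\Lambda^{-(n-k)}\theta(T^nx,T^nx^*)$: it requires verifying that $DT^{-1}$ acts as a strict contraction on the space of Lipschitz graphs over the stable direction inside each homogeneity strip, uniformly for $T\in\cF$. This reduces to combining the one-step expansion \eqref{eq:step1} (controlling the base motion) with the second-derivative bound \eqref{eq:2 deriv} (controlling the angular drift), together with careful tracking of how the factors of $\cos\vf$ appearing in \eqref{eq:expansion}--\eqref{eq:2 deriv} are tamed by the assumption that consecutive images remain in the same homogeneity strip $\bH_k$, where $\cos\vf\asymp k^{-2}$.
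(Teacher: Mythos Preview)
The paper does not actually prove this lemma; it simply records that it ``was proved in \cite{DZ2}.'' Your outline---unstable expansion plus a graph-transform/cone-contraction argument for part (a), and chain rule plus one-step distortion plus a geometric-series summation for part (b)---is precisely the standard route taken in \cite{DZ2} (and in \cite{demers zhang} before it). The paper even signals this just after \eqref{eq:one step contract}, where $\delta_0$ is shrunk ``so that the graph transform argument needed in the proof of Lemma~\ref{lem:angles}(a) holds,'' confirming that the angle-contraction step you flag as the main obstacle is indeed the crux.

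One correction worth making: in part (b) you write that the base-point contribution is ``controlled via \eqref{eq:distortion stable} applied along the unstable curve $T^k\gamma_x$,'' but \eqref{eq:distortion stable} is distortion along \emph{stable} curves. For two points $T^kx,T^kx^*$ joined by the unstable arc $T^k\gamma_x$, the $J_{\mu_0}T$ factor in your decomposition is controlled by \eqref{eq:D u dist}, while the remaining cosine/slope correction needs either the explicit billiard Jacobian formula or the second-derivative bound \eqref{eq:2 deriv}; either way the $1/3$ exponent comes from the fact that both points lie in a common homogeneity strip. With that adjustment your sketch matches the cited proof.
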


%%%%%%%%%%%%%%%%%%%%%%%%%%%%%%%%%%%%%%%%%%%%

To prove Proposition~\ref{prop:ly}, we fix $T \in \F$ and prove the
required Lasota-Yorke inequalities
\eqref{eq:weak norm}-\eqref{eq:unstable norm}.
It is shown in Lemma \ref{lem:L well defined} that $\Lp_{T,g}$ is a
continuous operator
on both $\B$ and $\B_w$
so that it suffices to prove the inequalities for $h \in \C^1(M)$. They
extend to the
completions by continuity.
Our purpose now is to show how they depend explicitly on the uniform
constants
given by {\bf (H1)}-{\bf (H5)} and do not require additional information.

%%%%%%%%%%%%%%%%%%%%%%%%%%%%%%%%%%%%%%%%%%%%%%%%%%%%%%%%%%

\subsection{Estimating the weak norm}
\label{weak norm}

Let $h \in \C^1(M)$, $W \in \W^s$ and $\psi \in \C^\alpha(W)$ such that
$|\psi|_{\C^\alpha(W)} \leq 1$.      For brevity, we define
\[
\hg = g - \log J_{\mu_0} T , \qquad \mbox{so that} \qquad e^{S_n \hg} = e^{S_n g} (J_{\mu_0}T^n)^{-1} .
\]   
For $n \geq0$, we write,
\begin{equation}
\label{eq:start}
\int_W\Lp_{T,g}^nh \, \psi \, dm_W
=\sum_{W^n_i \in\G_n(W)}\int_{W^n_i}h e^{S_n\hg} J_{W^n_i}T^n\psi
\circ T^n dm_W
\end{equation}
where $J_{W^n_i}T^n$ denotes the Jacobian of $T^n$ along $W^n_i$.

Using the definition of the weak norm on each $W^n_i$, we estimate
\eqref{eq:start} by
\begin{equation}
\label{eq:weak estimate}
\int_W\Lp_{T,g}^nh \, \psi \, dm_W \; \leq \; \sum_{W^n_i \in\G_n} |h|_w
|J_{W^n_i}T^n|_{\C^\alpha(W^n_i)} |e^{S_n
\hat g}|_{\C^\alpha(W^n_i)} |\psi\circ T^n|_{\C^\alpha(W^n_i)} .
\end{equation}
Using the bounded distortion property {\bf (H4)}, we estimate,
\beq
\label{eq:holder c0}
|J_{W^n_i} T^n|_{\C^\alpha(W^n_i)} \le (1+C_d) |J_{W^n_i} T^n|_{\C^0(W^n_i)},
\eeq
and similarly for $|(J_{\mu_0}T^n)^{-1}|_{\C^\alpha(W^n_i)}$.  Next, for the potential $g$ and
$x, y \in W^n_i$,
\beq
\label{eq:g dist}
|e^{S_ng(x)} - e^{S_ng(y)}| \le |e^{S_n g}|_{\C^0(W^n_i)} |S_ng(x) - S_ng(y)| \le
|e^{S_n g}|_{\C^0(W^n_i)} |g|_{\C^\alpha(M)} \sum_{i=0}^{n-1} C_e \Lambda^{-i\alpha} |x-y|^\alpha, 
\eeq
so that $|e^{S_ng}|_{\C^p(W^n_i)} \le C_g |e^{S_ng}|_{L^\infty}$, where
$C_g := C_e |g|_{\C^\alpha(M)} \sum_{i=0}^\infty \Lambda^{-i\beta}$.

Finally, we esimate the norm of $\psi \circ T^n$, again using {\bf (H1)}.
For $x,y \in W^n_i$, \begin{equation}
\label{eq:C1 C0}
\frac{|\psi (T^nx) - \psi (T^ny)|}{d_W(T^nx,T^ny)^\alpha}\cdot
\frac{d_W(T^nx,T^ny)^\alpha}{d_W(x,y)^\alpha} \leq |\psi|_{\C^\alpha(W)}
|J_{W^n_i}T^n|^\alpha_{\C^0(W^n_i)}
    \leq C_e \Lambda^{-\alpha n} |\psi|_{\C^\alpha(W)} ,
\end{equation}
so that $|\psi \circ T^n|_{\C^\alpha(W^n_i)} \leq C_e |\psi|_{\C^\alpha(W)} \le
C_e$.
We use this estimate together with \eqref{eq:holder c0} and \eqref{eq:g dist} to
bound
\eqref{eq:weak estimate} by
\[
\int_W \Lp_{T,g}^nh \, \psi \, dm_W \leq C_e (1+C_d)^2 C_g |e^{S_n \hg}|_{\infty} |h|_w \sum_{W^n_i \in \G_n}
|J_{W^n_i}T^n|_{\C^0(W^n_i)} \le C' C_g |e^{S_n \hg}|_{\infty} |h|_w,
\]
where $C' = C_e (1+C_d)^2 C_2$ and we have used Lemma~\ref{lem:growth}(b)
for the last inequality.
Taking the supremum over all $W \in \W^s$ and $\psi \in \C^\alpha(W)$ with
$|\psi|_{\C^\alpha(W)} \leq 1$
yields \eqref{eq:weak norm} expressed with uniform constants given by {\bf
(H1)}-{\bf (H5)}.

%%%%%%%%%%%%%%%%%%%%%%%%%%%%%%%%%%%%%%%%%%%%%%%%%%%%%%%%%%

\subsection{Estimating the strong stable norm}
\label{stable norm}

Let $W \in \W^s$ and let $W^n_i$ denote the elements of $\G_n(W)$ as
defined above.
For $\psi \in \C^\beta(W)$, $|\psi|_{\C^\beta(W)} \le |W|^{-p}$, define
$\bpsi_i = |W^n_i|^{-1} \int_{W^n_i} \psi \circ T^n \, dm_W$.
Using equation \eqref{eq:start}, we write
\begin{equation}
\label{eq:stable split}
\int_W\Lp_{T,g}^nh\, \psi \, dm_W      =
\sum_{i} \int_{W^n_i} e^{S_n \hat g} h \cdot J_{W^n_i}T^n \cdot(\psi \circ
T^n- \bpsi_i) \, dm_W
 + \bpsi_i  \int_{W^n_i}h e^{S_n \hat g} \cdot J_{W^n_i}T^n \, dm_W .
\end{equation}

To estimate the first term of \eqref{eq:stable split},
we first estimate $|\psi \circ T^n - \bpsi_i|_{\C^\beta(W^n_i)}$.
If $H_W^\beta(\psi)$ denotes the H\"older constant of $\psi$ along $W$, then
equation~\eqref{eq:C1 C0} implies
\begin{equation}
\label{eq:H^beta}
\frac{|\psi (T^nx) - \psi (T^ny)|}{d_W(x,y)^\beta} \leq C_e \Lambda^{-n\beta}
H_W^\beta(\psi)
\end{equation}
for any $x,y \in W^n_i$.  Since $\bpsi_i$ is constant on $W^n_i$, we have
$H^\beta_{W^n_i}(\psi \circ T^n - \bpsi_i) \leq C_e \Lambda^{-\beta n} H^\beta_W(\psi)$.
To estimate the $\C^0$ norm, note that $\bpsi_i = \psi \circ T^n(y_i)$ for
some
$y_i \in W^n_i$.  Thus for each $x \in W^n_i$,
\[
| \psi \circ T^n(x) - \bpsi_i|
    = |\psi \circ T^n(x) - \psi \circ T^n(y_i)|
\leq H^\beta_{W^n_i}(\psi \circ T^n) |W^n_i|^\beta \leq C_e H^\beta_W(\psi)
\Lambda^{-\beta n} .
\]
This estimate together with \eqref{eq:H^beta} and the fact that
$|\vf|_{W,p, \beta} \leq 1$, implies
\begin{equation}
\label{eq:C^beta small}
|\psi \circ T^n - \bpsi_i|_{\C^\beta(W^n_i)} \leq C_e \Lambda^{- \beta n}
|\psi|_{\C^\beta(W)}
\leq C_e \Lambda^{- \beta n} |W|^{-p} .
\end{equation}

We apply \eqref{eq:holder c0}, \eqref{eq:g dist} and \eqref{eq:C^beta small}
and the definition of the strong stable norm to the first term of
\eqref{eq:stable split},
\begin{equation}
\label{eq:first stable}
\begin{split}
\sum_i \int_{W^n_i} h e^{S_n \hat g} & J_{W^n_i}T^n \, (\psi \circ T^n -
\bpsi_i) \, dm_W \leq
(1+C_d)^2 C_e \sum_i \|h\|_s \frac{|W^n_i|^p}{|W|^p}
\left|e^{S_n \hat g} J_{W^n_i}T^n \right|_{C^0(W^n_i)} \Lambda^{- \beta n} \\
& \leq \; |e^{S_n \hg}|_{\infty} C_g (1+C_d)^2 C_e C_g \Lambda^{- \beta n} \|h\|_s
\sum_i \frac{|W^n_i|^p}{|W|^p}
|J_{W^n_i}T^n|_{\C^0(W^n_i)}
\; \leq \; C_4 C_g | e^{S_n \hg}|_{\infty} \Lambda^{-\beta n} \|h\|_s ,
\end{split}
\end{equation}
where $C_4 = (1+C_d)^2 C_e C_2^{1-p}$ and
in the second line we have used 
Lemma~\ref{lem:growth}(c) with $\varsigma = p$.

For the second term of \eqref{eq:stable split}, we use the fact that
$|\bpsi_i| \leq |W|^{-p} $ since $|W|^p |\psi|_{\C^\beta(W)} \le 1$.
Recall the notation introduced before the statement of
Lemma~\ref{lem:growth}.
Grouping the pieces $W^n_i \in \G_n(W)$
according to most recent long ancestors $W^k_j \in L_k(W)$, we have
\[
\begin{split}
\sum_{i} |W|^{-p} \int_{W^n_i}h e^{S_n \hat g} \cdot J_{W^n_i}T^n \,
dm_W
= & \sum_{k=1}^n \sum_{j\in L_k(W)}\sum_{ i\in \I_n(W^k_j)}
|W|^{-p} \int_{W^n_i}h e^{S_n \hat g} \cdot J_{W^n_i}T^n \, dm_W \\&  + \sum_{ i\in \I_n(W)}
     |W|^{-p} \int_{W^n_i} h e^{S_n \hat g} J_{W^n_i}T^n \, dm_W
\end{split}
\]
where we have split up the terms involving $k=0$ and $k \geq 1$.
We estimate the terms with $k \geq 1$
by the weak norm and the terms with $k=0$ by the strong stable norm.
Using again \eqref{eq:holder c0} and \eqref{eq:g dist},
\[
\begin{split}
\sum_{i} |W|^{-p} \int_{W^n_i}he^{S_n \hat g} \cdot J_{W^n_i}T^n \,
dm_W
& \leq
|e^{S_n\hg}|_{\infty} C_g (1+C_d)^2 \sum_{k=1}^n\sum_{j\in L_k(W)}
\sum_{i\in \I_n(W^k_j)} |W|^{-p} |h|_w | J_{W^n_i}T^n
|_{\C^0(W^n_i)}\\
&  \; \; \; \; \; +
|e^{S_n\hg}|_{\infty} C_g (1+C_d)^2 \sum_{\ i\in \I_n(W)}
\frac{|W^n_i|^p }{|W|^p } \|h\|_s |J_{W^n_i}T^n|_{\C^0(W^n_i)} .
\end{split}
\]

In the first sum above corresponding to $k\geq 1$, we write
\[
|J_{W^n_i}T^n|_{\C^0(W^n_i)} \leq |J_{W^n_i}T^{n-k}|_{\C^0(W^n_i)}
|J_{W^k_j}T^k|_{\C^0(W^k_j)} .
\]
Thus using Lemma~\ref{lem:growth}(a) from time $k$ to time $n$,
\[
\begin{split}
\sum_{k=1}^n \sum_{j \in L_k} \sum_{i \in \I_n(W^k_j)} |W|^{-p}
|J_{W^n_i}T^n|_{\C^0(W^n_i)}
& \leq \sum_{k=1}^n \sum_{j \in L_k(W)} |J_{W^k_j}T^k|_{\C^0(W^k_j)}
|W|^{-p}
\sum_{i  \in \I_n(W^k_j)} |J_{W^n_i}T^{n-k}|_{\C^0(W^n_i)} \\
& \le 3 \delta_0^{-p} \sum_{k=1}^n \sum_{j \in L_k(W)}
|J_{W^k_j}T^k|_{\C^0(W^k_j)} \,
\frac{|W^k_j|^p}{|W|^p} C_1 \theta_*^{n-k},
\end{split}
\]
since $|W^k_j| \ge \delta_0/3$.
The inner sum is bounded by $C_2^{1-p}$ for each $k$ by
Lemma~\ref{lem:growth}(c)
while the outer sum is bounded by $C_1/(1-\theta_*)$ independently of $n$.

Finally, for the sum corresponding to $k=0$, since
\[
|J_{W^n_i}T^n|_{\C^0(W^n_i)} \le (1+C_d) |T^nW^n_i| |W^n_i|^{-1} \le
(1+C_d) |J_{W^n_i}T^n|_{\C^0(W^n_i)},
\]
we use Jensen's inequality and Lemma~\ref{lem:growth}(a) to estimate,
\[
\sum_{i \in \I_n(W)} \frac{|W^n_i|^p}{|W|^p}
|J_{W^n_i}T^n|_{\C^0(W^n_i)}
\le (1+C_d)
\left( \sum_{i \in \I_n(W)} \frac{|T^nW^n_i|}{|W^n_i|} \right)^{1-p}
\le (1+C_d)C_1 \theta_*^{n(1-p)}.
\]

Gathering these estimates together, we have
\begin{equation}
\label{eq:second stable}
\sum_{i} |W|^{-p}\left| \int_{W^n_i}h e^{S_n \hat g} J_{W^n_i}T^n \,
dm_W\right|
\; \leq \; C_g |e^{S_n\hg}|_{\infty} \Big( C_5 \delta_0^{-p}|h|_w + C_6
\|h\|_s  \theta_*^{n(1-p)} \Big),
\end{equation}
where $C_5 = 3(1+C_d)^2 C_1C_2^{1-p} /(1-\theta_*)$ and
$C_6 = (1+C_d)^3 C_1$.
Putting together \eqref{eq:first stable} and \eqref{eq:second stable}
proves
\eqref{eq:stable norm},
\[
\|\Lp_{T,g}^n h\|_s \leq C' C_g |e^{S_n\hg}|_\infty \left( \Lambda^{-\beta
n}+\theta_*^{n(1-p)}\right)\|h\|_s
         + C' C_g |e^{S_n\hg}|_\infty \delta_0^{-p} |h|_w ,
\]
with $C' = \max \{ C_4, C_5, C_6 \}$, a uniform constant depending only on {\bf (H1)}-{\bf (H5)}.

%%%%%%%%%%%%%%%%%%%%%%%%%%%%%%%%%%%%%%%%%%%%%%%%%%%%%%%%%%%%%%%%

\subsection{Estimating the strong unstable norm}
\label{unstable norm}

Fix $\ve \le \ve_0$ and
consider two curves $W^1, W^2 \in\W^s$ with $d_{\W^s}(W^1,W^2) \leq \ve$.
For $n \geq 1$, we describe how to partition $T^{-n}W^\ell$ into
``matched'' pieces $U^\ell_j$ and
``unmatched'' pieces $V^\ell_k$, $\ell=1,2$.
In the what follows, we use $C_t$ to
denote a transversality constant which depends only on the minimum angle
between various transverse directions: the minimum angle between $C^s(x)$
and
$C^u(x)$, between $S^T_{-n}$ and $C^s(x)$, and
between $C^s(x)$ and the vertical and horizontal directions.

Let $\omega$ be a connected component of $W^1 \setminus \Si_{-n}^T$
such that $T^{-n}\omega \in \G_n(W)$.
We define a smooth local foliation $\{ \gamma_x \}_{x \in T^{-n}\omega}$
about $T^{-n}\omega$
such that for each $x \in T^{-n}\omega$: (1) $\gamma_x$ is centered at
$x$,
(2) $\gamma_x \in \widehat W^u$; (3) $|\gamma_x| \le 2BC_t C_e
\Lambda^{-n}\ve$
such that its image $T^n\gamma_x$, if not cut by a singularity or the
boundary of
a homogeneity strip, will have a projection on the vertical direction of
length $2\ve$.
By item (3) and the definition of $d_{\W^s}(W^1, W^2)$, it follows that
any curve
$T^n\gamma_x$ that is
not cut by a singularity or the boundary of a homogeneity strip must
necessarily
intersect $W^2$, except possibly if $T^n\gamma_x$ lies near the endpoints
of $W^1$.
By {\bf (H2)}, $T^i\gamma_x \in \widehat\W^u$ for each $i \ge 0$.

Doing this for each connected component of
$W^1 \setminus \Si_{-n}^T$, we subdivide $W^1 \setminus \Si_{-n}^T$ into
a countable
collection of subintervals of points for which $T^n\gamma_x$ intersects
$W^2 \setminus \Si_{-n}^T$ and subintervals for which this is not the
case.
This in turn induces a corresponding partition on
$W^2 \setminus \Si_{-n}^T$.

We denote by $V^\ell_k$ the pieces in $T^{-n}W^\ell$ which are not matched
up by this process
and note that
the images $T^nV^\ell_k$ occur either at the endpoints of $W^\ell$ or
because the
curve $\gamma_x$ has been cut by a singularity or the boundary of a
homogeneity strip.
In both cases, the length of the
curves $T^nV^\ell_k$ can be at most $C_t \ve$ due to the uniform
transversality of
$\Si_{-n}^T$ with $C^s(x)$, of $C^s(x)$ with $C^u(x)$ and of $C^s(x)$ with
the horizontal.

In the remaining
pieces the foliation $\{ T^n\gamma_x \}_{x \in T^{-n}W^1}$ provides a one
to one correspondence
between points in $W^1$ and $W^2$.
We partition these pieces in
such a way that the lengths of their images under $T^{-i}$
are less than $\delta_0$ for each $0 \le i \le n$ and
the pieces are pairwise matched by
the foliation $\{\gamma_x\}$. We call these matched pieces $\widetilde
U^\ell_j$
and note that $T^i \widetilde U^\ell_j \in \G_{n-i}(W^\ell)$ for each $i =
0, 1, \ldots n$.
For convenience, we further trim the $\widetilde U^\ell_j$ to pieces
$U^\ell_j$ so that $U^1_j$ and $U^2_j$ are both defined on the same
arclength
interval $I_j$. The at most two components of $T^n(\widetilde U^\ell_j
\setminus
U^\ell_j)$ have length less than $C_t \ve$ due to the uniform
transversality
of $C^s(x)$ with the vertical direction. We attach these trimmed pieces to
the adjacent $U^\ell_i$ or $V^\ell_k$ as appropriate so as not to create
any
additional components in the partition.

We further relabel any pieces $U^\ell_j$ as $V^\ell_j$
and consider them unmatched if for some $i$, $0 \le i \le n$,
$|T^i\gamma_x| > 2 |T^iU^\ell_j|$.
i.e. we only consider pieces matched if at each intermediate step, the
distance between
them is at most of the same order as their length. We do this in order to
be able to
apply Lemma~\ref{lem:angles} to the matched pieces. Notice that since the
distance between
the curves at each intermediate step is at most $C_t C_e \ve$ and due to
the uniform
contraction of stable curves going forward, we have $|T^nV^\ell_k| \le C_t
C_e^2 \ve$ for
all such pieces considered unmatched by this last criterion.

In this way we write $W^\ell = (\cup_j T^nU^\ell_j) \cup (\cup_k
T^nV^\ell_k)$.
Note that the images $T^nV^\ell_k$ of the unmatched pieces must have
length $\le C_v \ve$
for some uniform constant $C_v$
while the images of the matched pieces
$U^\ell_j$
may be long or short.

Recalling the notation of Section~\ref{norms}, we have arranged a pairing
of the pieces $U^\ell_j$ with the following property:
\begin{align}\label{eq:match}
\text{If}\,\,
U^1_j = G_{U^1_j}(I_j) = \{ (r, \vf_{U^1_j}(r)) : r \in I_j \},
\,\,\,\text{then }\, U^2_j = G_{U^2_j}(I_j) = \{ (r, \vf_{U^2_j}(r)) : r
\in I_j \},
\end{align}
so that the
point $x = (r, \vf_{U^1_j}(r)) \in U^1_j$ can associated with the point
$\bar x = (r, \vf_{U^2_j}(r)) \in U^2_j$ by the vertical
line $\{(r,s)\}_{s\in[-\pi/2, \pi/2]}$, for each $r \in I_j$. In addition,
the $U^\ell_j$ satisfy
the assumptions of Lemma~\ref{lem:angles}.

Given $\psi_\ell$ on $W^\ell$ with $|\psi_\ell|_{\C^\alpha(W^\ell)} \leq 1$ and
$d_\beta(\psi_1, \psi_2) \leq \ve$,
with the above construction we must estimate
\begin{align}
\label{eq:unstable split}
& \left|\int_{W^1} \Lp_{T,g}^nh \, \psi_1 \, dm_W - \int_{W^2} \Lp_{T,g}^nh \,
\psi_2 \, dm_W \right|
\; \leq \; \sum_{\ell,k} \left|\int_{V^\ell_k} h e^{S_n \hat g}
J_{V^\ell_k}T^n \psi_\ell \circ T^n \, dm_W \right|\nonumber\\
& + \sum_j \left| \int_{U^1_j} h e^{S_n \hat g} J_{U^1_j}T^n \psi_1\circ
T^n \, dm_W
- \int_{U^2_j} h e^{S_n \hat g} J_{U^2_j}T^n \psi_2\circ T^n \, dm_W
\right|
\end{align}
First we estimate the unmatched pieces $V^\ell_k$ 
using the strong stable norm. Note that
by \eqref{eq:C1 C0}, $|\psi_\ell \circ T^n|_{\C^\beta(V^\ell_k)}
\leq C_e |\psi_\ell|_{\C^\alpha(W^\ell)} \leq C_e$.
We estimate as
in Section~\ref{stable norm}, using the fact that $|T^nV^\ell_k| \le C_v
\ve$, as noted above,
\begin{equation}
\label{eq:first unstable}
\begin{split}
\sum_{\ell,k} \Big| \int_{V^\ell_k}h e^{S_n \hat g}
J_{V^\ell_k}T^n\psi_\ell\circ T^n \, & dm_W \Big|
\leq
C_e \sum_{\ell,k} \|h\|_s |V^\ell_k|^p |e^{S_n \hat g}|_{\C^\beta(V^\ell,k)}
|J_{V^\ell_k}T^n|_{\C^\beta(V^\ell,k)} \\
& \leq C_e (1+C_d)^2 C_g |e^{S_n\hg}|_\infty
\| h \|_s \sum_{\ell,k} |V^\ell_k|^p
|J_{V^\ell_k}T^n|_{\C^0(V^\ell_k)} \\
& \leq C' \ve^p C_g |e^{S_n\hg}|_\infty \|h\|_s \sum_{\ell,k}
|J_{V^\ell_k}T^n|_{\C^0(V^\ell_k)}^{1-p}
\le 2 C' \ve^p C_g |e^{S_n\hg}|_\infty \|h\|_s C_3^n ,
\end{split}
\end{equation}
with $C' = C_e (1+C_d)^3 C_v^p$,
where we have applied Lemma~\ref{lem:growth}(d)
with $\varsigma = 1-p > 1/2$
since there are at most two $V^\ell_k$ corresponding to
each element $W^{\ell, n}_i \in \G_n(W^\ell)$ as defined in
Section~\ref{preliminary} and
$|J_{V^\ell_k}T^n|_{\C^0(V^\ell_k)} \leq
|J_{W^{\ell,n}_i}T^n|_{\C^0(W^{\ell,n}_i)}$
whenever $V^\ell_k \subseteq W^{\ell,n}_i$.

Next, we must estimate
\[
\sum_j\left|\int_{U^1_j}he^{S_n \hat g} J_{U^1_j}T^n \, \psi_1\circ T^n \,
dm_W -
\int_{U^2_j}h e^{S_n \hat g} _{U^2_j}T^n \, \psi_2\circ T^n \, dm_W
\right| .
\]
 We fix $j$ and estimate the difference.
Define
\[
\phi_j = (e^{S_n \hat g} J_{U^1_j}T^n \, \psi_1 \circ T^n) \circ G_{U^1_j}
\circ G_{U^2_j}^{-1} .
\]
The function $\phi_j$ is well-defined on $U^2_j$ and we can write,
\begin{equation}
\label{eq:stepone}
\begin{split}
&\left|\int_{U^1_j}he^{S_n \hat g} J_{U^1_j}T^n \, \psi_1\circ T^n-
\int_{U^2_j}h e^{S_n \hat g} J_{U^2_j}T^n \, \psi_2\circ T^n\right|\\
&\leq \left|\int_{U^1_j}h e^{S_n \hat g} J_{U^1_j}T^n \, \psi_1\circ T^n-
\int_{U^2_j}h \,\phi_j \right|
+\left|\int_{U^2_j}h (\phi_j - e^{S_n \hat g} J_{U^2_j}T^n \, \psi_2\circ
T^n)\right| .
\end{split}
\end{equation}

We estimate the first term on the right hand side of~\eqref{eq:stepone}
using the
strong unstable norm.
Using \eqref{eq:holder c0} and \eqref{eq:C1 C0},
\begin{equation}
\label{eq:c1-unst 1}
| e^{S_n \hat g} J_{U^1_j}T^n\cdot \psi_1 \circ
T^n|_{\C^\alpha(U^1_j)}
\le C_e (1+C_d)^2 C_g |e^{S_n\hg}|_\infty
|J_{U^1_j}T^n|_{\C^0(U^1_j)}.
\end{equation}
Notice that
\begin{equation}
\label{eq:graph bound}
|G_{U^1_j} \circ G^{-1}_{U^2_j}|_{\C^1(U^2_j)}
\le \sup_{r \in U^2_j}
\frac{\sqrt{1 + (d\vf_{U^1_j}/dr)^2}}{\sqrt{1 + (d\vf_{U^2_j}/dr)^2}} \le
\sqrt{1 + \Gamma^2} =: C_{a},
\end{equation}
where $\Gamma$ is the maximum slope of curves in $\W^s$
given by {\bf (H1)}.  Using this, we estimate as in \eqref{eq:c1-unst 1},
\[
|\phi_j|_{\C^\alpha(U^2_j)}
\le C_{a} C_e (1+C_d)^2 C_g |e^{S_n\hg}|_\infty
|J_{U^1_j}T^n|_{\C^0(U^1_j)}.
\]
By the definition of $\phi_j$ and $d_\beta(\cdot, \cdot)$,
\[
d_\beta(e^{S_n \hat g} J_{U^1_j}T^n\psi_1\circ T^n, \phi_j)
= \left| \left[ e^{S_n \hat g} J_{U^1_j}T^n\psi_1\circ T^n \right] \circ
G_{U^1_j}
  - \phi_j \circ G_{U^2_j} \right|_{\C^\beta(I_j)} \; = \; 0 .
\]

By Lemma~\ref{lem:angles}(a), we have
$d_{\W^s}(U^1_j,U^2_j)\leq C_0\Lambda^{-n} \ve =: \ve_1$.
In view of \eqref{eq:c1-unst 1} and following, we renormalize the test
functions by
$R_j = C_7 C_g |e^{S_n\hg}|_\infty |J_{U^1_j}T^n|_{\C^0(U^1_j)}$
where $C_7 = C_{a} C_e (1+ C_d)^2$.
Then we apply the definition of the strong unstable norm with
$\ve_1$ in place of $\ve$.      Thus,
\begin{equation}
\label{eq:second unstable}
\sum_j \left|\int_{U^1_j}h e^{S_n \hat g} J_{U^1_j}T^n \, \psi_1\circ T^n
-
\int_{U^2_j} h  \, \phi_j \; \right|
\leq C_7 C_0^\gamma \ve^\gamma \Lambda^{-\gamma n} C_g |e^{S_n\hg}|_\infty 
\|h\|_u \sum_j |J_{U^1_j}T^n|_{\C^0(U^1_j)}
\end{equation}
where the sum is $\le C_2$ by Lemma~\ref{lem:growth}(b) since there is at
most
one matched piece $U^1_j$ corresponding to each element
$W^{1,n}_i \in \G_n(W^1)$ and $|J_{U^1_j}T^n|_{\C^0(U^1_j)} \le
|J_{W^{1,n}_i}T^n|_{\C^0(W^{1,n}_i)}$
whenever $U^1_j \subseteq W^{1,n}_i$.

It remains to estimate the second term in \eqref{eq:stepone} using the
strong stable norm.
\begin{equation}
\label{eq:unstable strong}
\left|\int_{U^2_j}h(\phi_j - e^{S_n \hat g} J_{U^2_j}T^n \psi_2 \circ T^n)
\right|
\leq \;  \|h\|_s |U^2_j|^p
\left|\phi_j - e^{S_n \hat g} J_{U^2_j}T^n \psi_2 \circ
T^n\right|_{\C^\beta(U^2_j)} .
\end{equation}
In order to estimate the $\C^\beta$-norm of the function in \eqref{eq:unstable
strong}, we split
it up into two differences. Since $|G_{U^\ell_j}|_{\C^1} \le C_{a}$ and
$|G_{U^\ell_j}^{-1}|_{\C^1} \le 1$, $\ell = 1,2$,
we write
\begin{equation}
\label{eq:diff}
\begin{split}
& | \phi_j - (e^{S_n \hat g} J_{U^2_j}T^n)\cdot \psi_2 \circ
T^n|_{\C^\beta(U^2_j)} \\
\leq& \; \left |\left[ (e^{S_n \hat g} J_{U^1_j}T^n)\cdot\psi_1 \circ
T^n\right]\circ G_{U^1_j} - \left[(e^{S_n \hat g} J_{U^2_j}T^n)\cdot\psi_2\circ T^n\right] \circ G_{U^2_j}\right|_{\C^\beta(I_j)}\\
\leq& \; \left | (e^{S_n \hat g} J_{U^1_j}T^n)\circ G_{U^1_j} \left[
\psi_1
\circ T^n\circ G_{U^1_j} -\psi_2 \circ T^n\circ
G_{U^2_j}\right]\right|_{\C^\beta(I_j)}\\
&+ \left|\left[(e^{S_n \hat g} J_{U^1_j}T^n) \circ
G_{U^1_j}-(e^{S_n \hat g} J_{U^2_j}T^n) \circ G_{U^2_j}\right]\psi_2\circ
T^n\circ G_{U^2_j}\right|_{\C^\beta(I_j)}\\
\leq& \; C_{a} (1+C_d)^2 C_g | e^{S_n \hat g} J_{U^1_j}T^n|_{\C^0(U^1_j)}
\left|\psi_1 \circ T^n\circ
G_{U^1_j} -\psi_2 \circ T^n\circ G_{U^2_j}\right|_{\C^q(I_j)}\\
&+ C_{a} C_e \left|(e^{S_n \hat g} J_{U^1_j}T^n) \circ
G_{U^1_j}-(e^{S_n \hat g} J_{U^2_j}T^n) \circ
G_{U^2_j}\right|_{\C^\beta(I_j)}
\end{split}
\end{equation}
To bound the two differences above, we need the following lemma, which was
proved in \cite{DZ2} Lemma 4.2. The only difference is the factor $e^{S_n
\hat g}$ which does not play any significant role in the proof, so we omit the proof
here.

\begin{lemma}
\label{lem:test}
There exist constants $C_8, C_9>0$, depending only on {\bf (H1)}-{\bf
(H5)}, such that,
\begin{itemize}
  \item[(a)] $\displaystyle
|(e^{S_n \hat g}  J_{U^1_j}T^n)\circ G_{U^1_j}
-e^{S_n \hat g} J_{U^2_j}T^n)\circ G_{U^2_j}|_{\C^\beta(I_j)}\leq C_8 C_g | e^{S_n
\hat g}
J_{U^2_j}T^n|_{C^0(U^2_j)} \ve^{1/3-\beta};$
  \item[(b)] $\displaystyle
|\psi_1 \circ T^n \circ G_{U^1_j} - \psi_2 \circ T^n \circ G_{U^2_j}
|_{\C^q(I_{r_j})}
\le C_9  C_g \ve^{\alpha-\beta} .$
\end{itemize}
\end{lemma}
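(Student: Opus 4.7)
The plan is to follow the proof of \cite[Lemma 4.2]{DZ2} closely and account for the multiplicative weight $e^{S_n\hat g}$, which does not interact in an essential way with the geometric estimates in the weightless case.

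For part (a), I would split
\[
(e^{S_n \hat g} J_{U^1_j}T^n)\circ G_{U^1_j} - (e^{S_n \hat g} J_{U^2_j}T^n)\circ G_{U^2_j} = A + B,
\]
where
$A := (e^{S_n\hat g}\circ G_{U^2_j})\bigl[J_{U^1_j}T^n\circ G_{U^1_j} - J_{U^2_j}T^n\circ G_{U^2_j}\bigr]$ and
$B := (J_{U^1_j}T^n\circ G_{U^1_j})\bigl[e^{S_n\hat g}\circ G_{U^1_j} - e^{S_n\hat g}\circ G_{U^2_j}\bigr]$.
The bracket in $A$ is precisely the quantity estimated in the weightless case \cite[Lemma 4.2]{DZ2}, yielding a $\C^\beta(I_j)$ bound of order $\ve^{1/3-\beta}|J_{U^2_j}T^n|_{\C^0(U^2_j)}$. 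The prefactor $e^{S_n\hat g}\circ G_{U^2_j}$ has $\C^\beta(I_j)$ norm bounded by $C\,C_g\,|e^{S_n\hat g}|_{\C^0(U^2_j)}$ by combining \eqref{eq:g dist} with the bounded distortion of $J_{\mu_0}T^n$ from {\bf (H4)}; multiplying gives the desired bound on $A$.

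The new work lies in estimating the bracket in $B$. The key inequality is
\[
\bigl|e^{S_n\hat g}\circ G_{U^1_j} - e^{S_n\hat g}\circ G_{U^2_j}\bigr|_{\C^\beta(I_j)} \le C\,C_g\,|e^{S_n\hat g}|_{\C^0(U^2_j)}\,\ve^{1/3-\beta},
\]
which I would derive by interpolation between a $\C^0$ bound and a $\C^{1/3}$ bound on the difference $f := e^{S_n\hat g}\circ G_{U^1_j} - e^{S_n\hat g}\circ G_{U^2_j}$. The $\C^0$ bound of order $C_g\,|e^{S_n\hat g}|_\infty\ve^{1/3}$ follows from the pointwise matching $d(G_{U^1_j}(r), G_{U^2_j}(r)) \le C\ve$ together with uniform $1/3$-H\"older regularity of $S_n\hat g = S_n g - \log J_{\mu_0}T^n$ along matched orbits (telescoping using exponential contraction {\bf (H1)} together with the bounded distortion {\bf (H4)}). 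A uniform $1/3$-H\"older constant on $f$ is obtained by the same telescoping argument applied separately on each curve. The standard elementary interpolation $H^\beta(f) \lesssim |f|_\infty^{1-3\beta}\,H^{1/3}(f)^{3\beta}$ then yields the exponent $\ve^{1/3-\beta}$. Multiplying by $|J_{U^1_j}T^n|_{\C^0(U^1_j)}$, which is comparable to $|J_{U^2_j}T^n|_{\C^0(U^2_j)}$ by Lemma~\ref{lem:angles}(b), finishes $B$.

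Part (b) follows directly from the corresponding estimate in \cite[Lemma 4.2]{DZ2}: the hypothesis $d_\beta(\psi_1,\psi_2) \le \ve$ together with the uniform contraction of $T^n$ along the $U^\ell_j$ and the smooth matching $G_{U^1_j}, G_{U^2_j}$ produces the factor $\ve^{\alpha-\beta}$. The weight $e^{S_n\hat g}$ does not appear in (b), so the constant $C_9$ is inherited from \cite{DZ2}; the factor $C_g$ in the statement is a uniformity convention and can be absorbed into $C_9$. The principal obstacle, and essentially the only point where the weight demands care, is establishing uniform-in-$n$ H\"older control on $e^{S_n\hat g}$ with dependence on $g$ captured entirely by $C_g$. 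This will rely on the admissibility of $g$ (giving $\alpha$-H\"older regularity on components of $M\setminus\cS_1^T$), the summability $\sum_{i\ge 0}\Lambda^{-i\alpha}<\infty$ coming from {\bf (H1)}, and the $1/3$-H\"older distortion of $J_{\mu_0}T^n$ from {\bf (H4)}. Once these uniform bounds are in hand, the geometric core of \cite[Lemma 4.2]{DZ2} transfers over without modification.
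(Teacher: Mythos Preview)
Your proposal is correct and aligns with the paper's own treatment: the paper does not give a proof at all, but simply states that the lemma ``was proved in \cite{DZ2} Lemma 4.2'' and that ``the only difference is the factor $e^{S_n\hat g}$ which does not play any significant role in the proof, so we omit the proof here.'' Your outline of the $A+B$ decomposition and the interpolation argument for the weight is exactly the kind of verification the paper is asking the reader to supply, and your observation that the $C_g$ in part (b) is merely a convention (since the weight does not appear there) is accurate.
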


It follows from Lemma~\ref{lem:test}(a) that
\[
|e^{S_n \hat g} J_{U^1_j}T^n|_{\C^0(U^1_j)}
 \le (1+C_8C_g \ve^{1/3-\beta}) |e^{S_n \hat g} J_{U^2_j}T^n|_{\C^0(U^2_j)}
\]
which we will use to simplify \eqref{eq:diff}.
Starting from \eqref{eq:unstable strong}, we apply Lemma~\ref{lem:test} to
\eqref{eq:diff}
 to obtain,
\begin{equation}
\label{eq:unstable three}
\begin{split}
& \sum_j \Big| \int_{U^2_j} h(\phi_j - e^{S_n \hat g} J_{U^2_j}T^n \psi_2
\circ T^n ) \, dm_W \Big| \\
& \le \bar C C_g \|h\|_s \sum_j |U^2_j|^p
|e^{S_n \hat g}  J_{U^2_j}T^n|_{\C^0(U^2_j)} \, \ve^{\alpha -\beta}
\le \bar C C_g |e^{S_n\hg}|_\infty \|h\|_s \ve^{\alpha-\beta} \sum_j
|J_{U^2_j}T^n|_{\C^0(U^2_j)},
\end{split}
\end{equation}
for some uniform constant $\bar C$
where again the sum is finite as in \eqref{eq:second unstable}.
This completes the estimate on the second term in \eqref{eq:stepone}.
Now we use this bound, together with \eqref{eq:first unstable} and
\eqref{eq:second unstable} to estimate \eqref{eq:unstable split}
\begin{equation}
\label{eq:final unstable}
\begin{split}
\left|\int_{W^1} \Lp_{T,g}^nh \, \psi_1 \, dm_W - \int_{W^2} \Lp_{T,g}^nh \,
\psi_2 \, dm_W \right|
& \leq  CC_3^n C_g |e^{S_n\hg}|_\infty \|h\|_s \ve^p + C
\|h\|_u \Lambda^{-\gamma n} C_g |e^{S_n\hg}|_\infty \ve^\gamma \\
&  \qquad + C C_g |e^{S_n\hg}|_\infty \|h\|_s \ve^{\alpha-\beta} ,
\end{split}
\end{equation}
where again $C$ depends only on {\bf (H1)}-{\bf (H5)} through the
estimates above.
Since $\alpha-\beta \ge \gamma$ and $p \ge \gamma$, we divide through by
$\ve^\gamma$ and take
the appropriate suprema to complete the proof of \eqref{eq:unstable norm}.

%%%%%%%%%%%%%%%%%%%%%%%%%%%%

\end{document}